\theoremstyle{plain}
\newtheorem{theorem}{Theorem}[section]    
\newtheorem{twisting lemma}[theorem]{Twisting lemma}
\newtheorem{lemma}[theorem]{Lemma}       
\newtheorem{proposition}[theorem]{Proposition}  
\newtheorem{corollary}[theorem]{Corollary}   
\theoremstyle{remark}
\newtheorem{definition}[theorem]{Definition}      
\newtheorem{remark}[theorem]{Remark}   
\def\sep{{\scriptsize\hbox{\rm sep}}}
\def\cm{\hbox{\hbox{\rm C}\kern-5pt{\raise 1pt\hbox{$|$}}}}
\def\lhfl#1#2{\smash{\mathop{\hbox to 12mm{\leftarrowfill}}
\limits^{#1}_{#2}}}
\def\rhfl#1#2{\smash{\mathop{\hbox to 12mm{\rightarrowfill}}
\limits^{#1}_{#2}}}
\def\build#1_#2^#3{\mathrel{
\mathop{\kern 0pt#1}\limits_{#2}^{#3}}}
\def\htrait#1#2{\smash{\mathop{\hbox to 12mm{\hrulefill}}
\limits^{#1}_{#2}}}
\def\sxbullet{{\raise 2pt\hbox{\bf .}}}
\begin{document}

\title{Parametric Galois extensions} 

\author{Fran\c cois Legrand}

\email{Francois.Legrand@math.univ-lille1.fr}

\address{Laboratoire Paul Painlev\'e, Math\'ematiques, Universit\'e Lille 1, 59655 Villeneuve d'Ascq Cedex, France}

\date{\today}

\begin{abstract}
Given a field $k$ and a finite group $H$, {\it{an $H$-parametric extension over $k$}} is a finite Galois extension of $k(T)$ of Galois group containing $H$ which is regular over $k$ and has all the Galois extensions of $k$ of group $H$ among its specializations. We are mainly interested in producing non $H$-parametric extensions, which relates to classical questions in inverse Galois theory like the Beckmann-Black problem and the existence of one parameter generic polynomials. We develop a general approach started in a preceding paper and provide new non parametricity criteria and new examples.
\end{abstract}

\maketitle

\section{Presentation}

The {\it{Inverse Galois Problem}} asks whether, for a given finite group $H$, there exists at least one Galois extension of $\mathbb{Q}$ of group $H$. A classical way to obtain such an extension consists in producing a Galois extension $E/\mathbb{Q}(T)$ with the same group which is regular over $\mathbb{Q}$ \footnote{{\it{i.e.}} $E \cap \overline{\mathbb{Q}} = \mathbb{Q}$. See \S2.1 for basic terminology.}: from the {\it{Hilbert irreducibility theorem}}, $E/\mathbb{Q}(T)$ has at least one specialization of group $H$ (in fact infinitely many if $H$ is not trivial).
 
In this paper we are interested in ``parametric Galois extensions", {\it{i.e.}} in finite Galois extensions $E/\mathbb{Q}(T)$ which are regular over $\mathbb{Q}$ - from now on, say for short that $E/\mathbb{Q}(T)$ is a ``$\mathbb{Q}$-regular Galois extension" - and which have all the Galois extensions of $\mathbb{Q}$ of group $H$ among their specializations. More precisely, given a field $k$ and a finite group $H$, we say that a $k$-regular finite Galois extension $E/k(T)$ of group $G$ containing $H$ (with possibly $H \not=G$) is {\it{$H$-parametric over $k$}} if any Galois extension of $k$ of group $H$ ocurs as a specialization of $E/k(T)$ (definition \ref{ext st para}). The special case $H=G$ is of particular interest.

This was introduced in our previous paper \cite{Leg13a} in the number field case. Given a field $k$ and a finite group $G$, the question of whether there is a $G$-parametric extension over $k$ of group $G$ or not is intermediate between these classical two questions in inverse Galois theory:

\vspace{0.5mm}

\noindent
-  if there is such an extension, then it obviously solves the {\it{Beckmann-Black problem for $G$ over $k$}}, which asks whether any Galois extension $F/k$ of group $G$ occurs as a specialization of some $k$-regular Galois extension $E_F/k(T)$ with the same group,

\vspace{0.5mm}

\noindent
- if there are no such extension, then there obviously cannot exist a {\it{one parameter generic polynomial over $k$ of group $G$}}, {\it{i.e.}} a polynomial $P(T,Y) \in k(T)[Y]$ of group $G$ such that the splitting extension over $L(T)$ is $G$-parametric over $L$ for any field extension $L/k$.

\vspace{0.5mm}

\noindent
We refer to $\S$2.2 for more details.

If studying parametric extensions indeed seems a natural first step to these important topics, it is itself already quite challenging, especially over number fields. The question of deciding whether a given $k$-regular Galois extension of $k(T)$ of given group $G$ is $G$-parametric over a given base field $k$ or not indeed seems to be difficult, even for small groups $G$: for example, in the case $G=\mathbb{Z}/3\mathbb{Z}$ and $k=\mathbb{Q}$, the answer seems to be known for only one such extension (this extension is $\mathbb{Z}/3\mathbb{Z}$-parametric over $\mathbb{Q}$; see \S1.1 below). Of course there are some obvious examples like the extensions $k(\sqrt[n]{T})/ k({T})$ ($n \in \mathbb{N} \setminus \{0\}$) and $k(T)(\sqrt{T^2+1})/k(T)$: if $k$ contains the $n$-th roots of unity, the former is $\mathbb{Z}/n\mathbb{Z}$-parametric over $k$ (this follows from the Kummer theory) whereas, if $k \subset \mathbb{R}$, the latter is not $\mathbb{Z}/2\mathbb{Z}$-parametric over $k$ (since none of its specializations is imaginary). But they seem to be quite sparse.

\subsection{Parametric extensions over various fields}
In $\S$2.3, we give some first conclusions on parametric extensions (based on previous works) over various base fields $k$ with good arithmetic properties such as PAC fields, finite fields or the field $\mathbb{Q}$ and its completions. 

For example, in the case $k$ is PAC ($\S$2.3.1), the situation is quite clear: any finite $k$-regular Galois extension of $k(T)$ is parametric over $k$ with respect to any subgroup of its Galois group. In contrast, in the case $k=\mathbb{Q}$ ($\S$2.3.4), not much is known although it may be expected that only a few extensions are parametric. On the one hand, it is known that there is a $G$-parametric extension over $\mathbb{Q}$ of group $G$ for each of the four groups $\{1\}$, $\mathbb{Z}/2\mathbb{Z}$, $\mathbb{Z}/3\mathbb{Z}$ and $S_3$. For any other one, it is unknown whether there exists such an extension or not. On the other hand, only a few non parametric extensions over $\mathbb{Q}$ are known.

\subsection{First examples over $\mathbb{Q}$}
In $\S$3, we use {\it{ad hoc}} arguments to obtain some new examples of non $H$-parametric extensions over $\mathbb{Q}$ with small Galois groups $G$ and small branch point numbers (propositions \ref{ad r=2}, \ref{ad r=3} and \ref{ad r=4}):

\vspace{2mm}

\noindent
{\bf{Theorem 1.}} (1) {\it{A given $\mathbb{Q}$-regular quadratic extension of $\mathbb{Q}(T)$ with two branch points is $\mathbb{Z}/2\mathbb{Z}$-parametric over $\mathbb{Q}$ if and only if each of them is $\mathbb{Q}$-rational.

\vspace{1mm}

\noindent
{\rm{(2)}} No $\mathbb{Q}$-regular Galois extension of $\mathbb{Q}(T)$ of group $\mathbb{Z}/2\mathbb{Z} \times \mathbb{Z}/2\mathbb{Z}$ with three branch points is $\mathbb{Z}/2\mathbb{Z} \times \mathbb{Z}/2\mathbb{Z}$-parametric over $\mathbb{Q}$.

\vspace{1mm}

\noindent
{\rm{(3)}} The splitting extension over $\mathbb{Q}(T)$ of $Y^3+T^2Y+T^2$ is a four branch point $\mathbb{Q}$-regular Galois extension of group $S_3$ which is $H$-parametric over $\mathbb{Q}$ for no subgroup $H \subset S_3$.}}

\vspace{2mm}

\noindent
The proof rests on the non-existence of solutions to some diophantine equations (for the first two parts) and on the non totally real behavior of the specializations (for the third part).

\subsection{A systematic approach}
In $\S$4, we offer a systematic approach, already started in \cite{Leg13a}, to give more examples of non $H$-parametric extensions over $k$ of group $G$ containing $H$. Given a $k$-regular Galois extension $E_{1}/k(T)$ of group $H$ and a $k$-regular Galois extension $E_{2}/k(T)$ of group $G$, we provide two sufficient conditions which each guarantees that there exist some specializations of $E_{1}/k(T)$ of group $H$ which cannot be specializations of $E_{2}/k(T)$ (and so $E_{2}/k(T)$ is not $H$-parametric over $k$). The first one ({\it{Branch Point Hypothesis}}) involves the branch point arithmetic while the second one ({\it{Inertia Hypothesis}}) is a more geometric condition on the inertia of the two extensions $E_{1}/k(T)$ and $E_{2}/k(T)$. Theorem \ref{methode} is our precise result.

We work over base fields $k$ which are quotient fields of any Dedekind domain of characteristic zero with infinitely many distinct primes\footnote{The case there are only finitely many primes can also be considered. We refer to \cite[chapter 3]{Leg13c} where this situation was studied.}, additionaly assumed to be hilbertian. Number fields or finite extensions of rational function fields $\kappa(X)$, with $\kappa$ an arbitrary field of characteristic zero (and $X$ an indeterminate), are typical examples.

\subsection{Applications}
In $\S$5-7, we use our criteria to give new examples of non parametric extensions over various base fields.

\subsubsection{A general result over suitable number fields} In $\S$5, we obtain the following result (corollary \ref{coro non conj}) which leads to non $G$-parametric extensions of group $G$ over suitable number fields for many groups $G$.

\vspace{2mm}

\noindent
{\bf{Theorem 2.}} {\it{Let $G$ be a finite group. Assume that there exists some set $\{C_1,\dots,C_{r},C\}$ of non trivial conjugacy classes of $G$ satisfying the following two conditions:

\vspace{0.5mm}

\noindent
{\rm{(1)}} the elements of $C_1,\dots,C_r$ generate $G$,

\vspace{0.5mm}

\noindent
{\rm{(2)}} the conjugacy class $C$ is a power of $C_i$ for no index $i \in \{1,\dots,r\}$.

\vspace{0.5mm}

\noindent
Then there exist some number field $k$ and some $k$-regular Galois extension of $k(T)$ of group $G$ which is not $G$-parametric over $k$.}}

\vspace{2mm}

\noindent
Many finite groups admit a conjugacy class set as above: abelian groups which are not cyclic of prime power order, symmetric groups $S_n$ ($n \geq 3$), alternating groups $A_n$ ($n \geq 4$), dihedral groups $D_n$ of order $n \geq 2$, non abelian simple groups, {\it{etc.}} See $\S$5.1.1 for more details and references. Moreover the conclusion also holds if $k$ is any finite extension of the rational function field $\mathbb{C}(X)$ (\S5.2) and, under some conjecture of Fried, one can even take $k=\mathbb{Q}$ (corollary \ref{fri}).

\subsubsection{Examples over given base fields} 
In $\S$6 and \S7, we give new examples of non $H$-parametric extensions of group $G$ containing $H$ over various given base fields $k$ (in particular over $k=\mathbb{Q}$). 

To do so, we need to start from two $k$-regular Galois extensions of $k(T)$ with groups $H$ and $G$ respectively. This first step depends on the state-of-the-art in inverse Galois theory, especially in the case $k=\mathbb{Q}$, and the involved finite groups then are the classical ones in this context: abelian groups, symmetric groups, alternating groups, some other simple groups... We present our examples below in connection with those already given in \cite{Leg13a}.

\vspace{3mm}

\noindent
(a) {\it{Examples from the Branch Point Criterion}} (\S6). Let $k$ be a number field and $G$ a finite group. A first example is an improved version of a result of \cite{Leg13a} for $k$-regular Galois extensions of $k(T)$ of group $G$ with four branch points. Here we drop the branch point number assumption and give pure branch point arithmetical conditions for a given $k$-regular Galois extension of $k(T)$ of group $G$ not to be $H$-parametric over $k$ for any given non trivial subgroup $H \subset G$ (corollary \ref{data}).

We give some concrete examples in the situation $G=\mathbb{Z}/2\mathbb{Z}$ (and so $H=\mathbb{Z}/2\mathbb{Z}$ too) where the existence of at least one $k$-regular Galois extension of $k(T)$ of group $G$ satisfying our conditions is guaranteed and which is already of some interest (corollary \ref{Z/2Z}). Some further examples with $G= \mathbb{Z}/n\mathbb{Z}$ ($n \geq 2$) are given (corollaries \ref{Z/2Z 2} and \ref{cyclic 2}).

\vspace{3mm}

\noindent
(b) {\it{Examples from the Inertia Criterion}} (\S7).

\vspace{2.5mm}

\noindent
(i) {\it{Symmetric and alternating groups}}. 
A first example is an improved version of a result of \cite{Leg13a} giving practical sufficient conditions for a given $k$-regular Galois extension of $k(T)$ of group $G=S_n$ ($n \geq 3$) not to be $G=S_n$-parametric over $k$ ($\S$7.1.3); here $k$ is any of our allowed base fields (and even more general ones) while it was a number field in \cite{Leg13a}. We also have an analog with $G=A_n$ ($\S$7.2.3). Theorem 3 below is a consequence of our results:

\vspace{2mm}

\noindent
{\bf{Theorem 3.}} {\it{Let $r$ be an integer $\geq 3$ and $k$ a number field or a finite extension of the rational function field $\mathbb{C}(X)$. Then, for any integer $n \geq 8r^2$, no $k$-regular Galois extension of $k(T)$ of group $G=A_n$ with $r$ branch points is $G=A_n$-parametric over $k$.}}

\vspace{2mm}

\noindent
The same conclusion holds with $G=S_n$ over more general base fields. 

Moreover our results show that several classical $k$-regular Galois extensions of $k(T)$ of group $S_n$ (resp. of group $A_n$) are not $S_n$-parametric (resp. $A_n$-parametric) over any of our allowed base fields $k$. Corollaries \ref{coro Sn} and \ref{coro An} give our main examples.
 
\vspace{2.5mm}

\noindent
(ii) {\it{Non abelian simple groups}}. We also show that some regular realizations of some simple groups $G$ provided by the {\it{rigidity method}} are not $G$-parametric. For instance, using the Atlas \cite{Atl} notation \hbox{for conjugacy classes of finite groups, we have (corollary \ref{PSL}):}

\vspace{2mm}

\noindent
{\it{Let $p$ be a prime $\geq 5$ and $k$ one of our allowed base fields such that $(-1)^{(p-1)/2} p$ is a square in $k$. Then no $k$-regular Galois extension of $k(T)$ of group ${\rm{PSL}}_2(\mathbb{F}_p)$ provided by either one of the rigid triples $(2A, pA, pB)$ (if $(\frac{2}{p})=-1$) and $(3A, pA,pB)$ (if $(\frac{3}{p})=-1$) of conjugacy classes of ${\rm{PSL}}_2(\mathbb{F}_p)$ is ${\rm{PSL}}_2(\mathbb{F}_p)$-parametric over $k$.}}

\vspace{2mm}

\noindent
We also have a similar result with the Monster group (corollary \ref{Monstre}).

\vspace{2.5mm}

\noindent
(iii) {\it{Examples with $H \not=G$}}. We also have various examples which are specifically devoted to the case $H \not=G$. For instance (corollary \ref{baby}):

\vspace{2mm}

\noindent
{\it{Let $k$ be one of our allowed base fields. Then, with {\rm{Th}} the Thompson group, no $k$-regular Galois extension of $k(T)$ of group the Baby-Monster group}} B {\it{provided by the rigid triple $(2C,3A,55A)$ of conjugacy classes of }} B {\it{is {\rm{Th}}-parametric over $k$.}}

\vspace{2mm}

\noindent
Further similar examples with various groups such as symmetric groups, other sporadic groups or $p$-groups are given (corollaries \ref{coro J2} and \ref{cyclic}).

\vspace{2mm}

{\bf{Acknowledgements.}} I am very grateful to my advisor Pierre D\`ebes for his many re-readings, helpful comments and valuable suggestions. I also wish to thank the anonymous referee for constructive remarks.

\section{Basics on parametric extensions}
We first set up in $\S$2.1 the terminology for the basic notions we will use in this paper.  We then point out in $\S$2.2 some connections between parametric extensions and some classical notions in inverse Galois theory and investigate in $\S$2.3 such extensions over various fields.

\subsection{Basic definitions}

Let $k$ be a field and $\overline{k}$ an algebraic closure of $k$. Denote the separable closure of $k$ in $\overline k$ by $k^\sep$ and its absolute Galois group by ${\rm{G}}_k$. Let $E/k(T)$ be a finite Galois extension which is {\it{regular over $k$}} ({\it{i.e.}} $E \cap \overline{k}=k$) and $G$ its Galois group. To make the exposition simpler, say for short that $E/k(T)$ is a ``$k$-regular Galois extension of group $G$". For more on $\S$2.1.1-3, we refer for example to \cite[chapter 3]{Deb09}. 

\subsubsection{Branch points}

Denote the integral closure of $\overline{k}[T]$ (resp. of $\overline{k}[1/T]$) in $E\overline{k}$ by $\overline{B}$ (resp. by $\overline{B^*}$). A point $t_0 \in \overline{k}$ (resp. $\infty$) is said to be {\it{a branch point of $E/k(T)$}} if the prime $(T-t_0) \, \overline{k}[T]$ (resp. $(1/T) \, \overline{k}[1/T]$) ramifies in $\overline{B}$ (resp. in $\overline{B^*}$). Classically $E/k(T)$ has only finitely many branch points, denoted by $t_1,\dots,t_r$.

\subsubsection{Inertia canonical invariant} Assume that $k$ has characteristic zero. Fix a {\it{coherent system $\{\zeta_n\}_{n=1}^\infty$ of roots of unity}}, {\it{i.e.}} $\zeta_n$ is a primitive $n$-th root of unity and $\zeta_{nm}^n=\zeta_m$ for any integers $n$ and $m$.

To each $t_i$ can be associated a conjugacy class $C_i$ of $G$, called the {\it{inertia canonical conjugacy class (associated with $t_i$)}}, in the following way. The inertia groups of $E\overline{k}/\overline{k}(T)$  at $t_i$ are cyclic conjugate groups of order equal to the ramification index $e_i$. Furthermore each of them has a distinguished generator corresponding to the automorphism $(T-t_i)^{1/e_i} \mapsto \zeta_{e_i} (T-t_i)^{1/e_i}$ of $\overline{k}(((T-t_i)^{1/e_i}))$ (replace $T-t_i$ by $1/T$ if $t_i=\infty$). Then $C_i$ is the conjugacy class of all the distinguished generators of the inertia groups at $t_i$. The unordered $r$-tuple $(C_1,\dots,C_r)$ is called {\it{the inertia canonical invariant of $E/k(T)$}}.

\subsubsection{Specializations}

If $t_0 \in \mathbb{P}^1(k)$ is not a branch point, the residue field of some prime above $t_0$ in $E/k(T)$ is denoted by $E_{t_0}$ and we call the extension $E_{t_0}/k$ {\it{the specialization of $E/k(T)$ at $t_0$}} (this does not depend on the choice of the prime above $t_0$ since the extension $E/k(T)$ is Galois). It is a Galois extension of $k$ of Galois group a subgroup of $G$, namely the decomposition group of the extension $E/k(T)$ at $t_0$.

\vspace{2mm}

This classical lemma, which is proved in \cite[\S B.1.4.1]{Leg13c}, is useful:

\begin{lemma} \label{spec}
Let $P(T,Y) \in k[T][Y]$ be a monic (with respect to $Y$) separable polynomial of splitting field $E$ over $k(T)$. Then, for any $t_0 \in k$ such that the specialized polynomial $P(t_0,Y)$ is separable over $k$, $t_0$ is not a branch point of $E/k(T)$ and the specialization $E_{t_0}/k$ of $E/k(T)$ at $t_0$ is the splitting extension over $k$ of $P(t_0,Y)$.
\end{lemma}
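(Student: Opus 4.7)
The plan is to prove the two assertions in turn, relying on the single observation that the discriminant $\Delta(T) \in k[T]$ of $P(T,Y)$ with respect to $Y$ satisfies $\Delta(t_0) \neq 0$: this value is precisely $\mathrm{disc}\, P(t_0,Y)$, which is nonzero by the separability hypothesis.

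For the non-branch-point statement, note that the roots $y_1, \dots, y_n \in E$ of $P(T,Y)$ are integral over $k[T]$ and hence lie in $\overline{B}$. I would apply Hensel's lemma in the complete local ring $\overline{k}[[T - t_0]]$: since $P(t_0,Y) = \prod_i (Y - \alpha_i)$ factors into $n$ distinct linear factors over $\overline{k}$, unique lifts $\widetilde{y}_i \in \overline{k}[[T - t_0]]$ with $P(T, \widetilde{y}_i) = 0$ and $\widetilde{y}_i \equiv \alpha_i \pmod{T - t_0}$ exist. These therefore account for all roots of $P(T,Y)$ in any algebraic closure of $\overline{k}((T - t_0))$. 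Consequently, for any prime $\overline{\mathfrak{p}}$ of $\overline{B}$ above $(T - t_0)\overline{k}[T]$, the induced embedding of the $\overline{\mathfrak{p}}$-adic completion of $E\overline{k}$ into such an algebraic closure sends each $y_i$ to some $\widetilde{y}_{\sigma(i)} \in \overline{k}[[T - t_0]]$; thus the completion is contained in $\overline{k}((T - t_0))$ itself and is unramified. Hence $(T - t_0)\overline{k}[T]$ is unramified in $\overline{B}$, so $t_0$ is not a branch point.

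For the identification of $E_{t_0}$, pick a prime $\mathfrak{p}$ of $B$ above $(T - t_0)k[T]$, so by definition $E_{t_0} = B/\mathfrak{p}$. The reductions $\overline{y_i} := y_i + \mathfrak{p} \in E_{t_0}$ are roots of $P(t_0,Y)$; since $P(t_0,Y)$ is separable of degree $n$ and we already have $n$ roots inside the field $E_{t_0}$, the $\overline{y_i}$ must be pairwise distinct and exhaust all roots of $P(t_0,Y)$ in $\overline{k}$. In particular, the splitting field $F := k(\overline{y_1}, \dots, \overline{y_n})$ of $P(t_0,Y)$ over $k$ satisfies $F \subseteq E_{t_0}$.

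For the reverse inclusion, I would use that Step~1 together with \S2.1.3 identifies $\mathrm{Gal}(E_{t_0}/k)$ with the decomposition group $D_\mathfrak{p} \subseteq G$. Any $\widetilde{\sigma} \in D_\mathfrak{p}$ acts on $\{y_1, \dots, y_n\}$ by some permutation $\tau$ and, since it preserves $\mathfrak{p}$, induces the permutation $\overline{y_i} \mapsto \overline{y_{\tau(i)}}$ on $E_{t_0}$. If the corresponding $\sigma \in \mathrm{Gal}(E_{t_0}/k)$ fixes $F$ pointwise, then $\overline{y_{\tau(i)}} = \overline{y_i}$ for every $i$; distinctness of the $\overline{y_j}$'s forces $\tau = \mathrm{id}$, so $\widetilde{\sigma}$ fixes each $y_i$ and hence fixes $E = k(T)(y_1, \dots, y_n)$, giving $\widetilde{\sigma} = 1$. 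Thus $\mathrm{Gal}(E_{t_0}/F) = 1$, and $E_{t_0} = F$. The main obstacle is the local-analytic Hensel argument in Step~1; once local unramifiedness is in hand, the identification is a clean bookkeeping exercise using the faithful $G$-action on the $y_i$'s.
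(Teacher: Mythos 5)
Your argument is correct, and it is the classical one: the paper itself does not prove Lemma \ref{spec} but defers to \cite[\S B.1.4.1]{Leg13c}, and what you write (non-vanishing of the discriminant at $t_0$, Hensel's lemma in $\overline{k}[[T-t_0]]$ to get unramifiedness, then reduction of the roots modulo a prime above $t_0$ and the decomposition-group description of $\mathrm{Gal}(E_{t_0}/k)$ from \S 2.1.3) is exactly the standard route. One small imprecision: the pairwise distinctness of the $\overline{y_i}$ does not follow merely from their being $n$ roots of the separable degree-$n$ polynomial $P(t_0,Y)$ (a list of $n$ roots need not be a list of distinct roots); it follows because the factorization $P(T,Y)=\prod_i(Y-y_i)$, valid in $B[Y]$ as the $y_i$ are integral over $k[T]$, reduces modulo $\mathfrak{p}$ to $P(t_0,Y)=\prod_i(Y-\overline{y_i})$, and separability of the left-hand side then forces the $\overline{y_i}$ to be distinct (equivalently, $\prod_{i<j}(\overline{y_i}-\overline{y_j})^2=\pm\Delta(t_0)\neq 0$). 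With that one line inserted, the bookkeeping in your last paragraph goes through as written.
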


\subsubsection{Parametric extensions}

\begin{definition} \label{ext st para}
Let $E/k(T)$ be a $k$-regular finite Galois extension of branch point set $\{t_1, \dots, t_r\}$.

\vspace{1mm}

\noindent
(1) Let $H$ be a subgroup of ${\rm{Gal}}(E/k(T))$. We say that $E/k(T)$ is {\it{H-parametric over $k$}} if, for every Galois extension $F/k$ of group $H$, there exists some point $t_0 \in \mathbb{P}^1(k) \setminus \{t_1,\dots,t_r\}$ such that $F/k$ occurs as the specialization $E_{t_0}/k$ of $E/k(T)$ at $t_0$.

\vspace{1mm}

\noindent
(2) We say that $E/k(T)$ is {\it{parametric over $k$}} if this extension is $H$-parametric over $k$ for each subgroup $H \subset {\rm{Gal}}(E/k(T))$.
\end{definition}

\subsection{Connections with some classical notions}

For more on below and \S2.3, we refer to \cite[\S2.1-2]{Leg13c}.

Let $k$ be a field and $H \subset G$ two finite groups. The notion of $H$-parametric extensions $E/k(T)$ over $k$ of group ${\rm{Gal}}(E/k(T))=G$ is intermediate between these classical notions in inverse Galois theory.

\subsubsection{One parameter generic polynomials}
Recall that a monic (with respect to $Y$) separable polynomial $P(T,Y) \in k[T][Y]$ of group $G$ is called {\it{generic over $k$}} if, for any field extension $L/k$, any Galois extension of $L$ of group $G$ occurs as the splitting extension of some specialized separable polynomial $P(t_0,Y)$ with $t_0 \in L$. See \cite{JLY02} for more on generic polynomials. 

Given a generic polynomial over $k$ of group $G$, its splitting extension $E/k(T)$ is $G$-parametric over $k$ (lemma \ref{spec}). It is in fact {\it{$G$-generic over $k$}}, {\it{i.e.}} the extension $EL/L(T)$ is $G$-parametric over $L$ for any field extension $L/k$ (which is linearly disjoint from $E$ over $k$). Of course any $G$-generic extension over $k$ of group $G$ is $G$-parametric over $k$. Remark \ref{non gen} shows however that the converse does not hold in general\footnote{Other counter-examples are given in \cite[remark 2.1.7 and example 2.2.1]{Leg13c}.}.

Moreover, if $k$ is infinite, the splitting extension $E/k(T)$ is even {\it{generic over $k$}}, {\it{i.e.}} the extension $EL/L(T)$ is parametric over $L$ for any field extension $L/k$ (as explained in \cite[proposition 2.1.8]{Leg13c}, it is essentially \cite{Kem01}). In particular, $E/k(T)$ is parametric over $k$.

\subsubsection{Lifting extensions}
Given a Galois extension $F/k$ of group $H$, recall that a {\it{lifting extension of group $G$ for $F/k$}} is a $k$-regular Galois extension $E_F/k(T)$ of group $G$ which has the extension $F/k$ among its specializations. 

Then any $H$-parametric extension over $k$ of group $G$ obviously is a lifting extension of group $G$ for any Galois extension of $k$ of group $H$. Moreover, if there exists at least one $G$-parametric extension over $k$ of group $G$, then it obviously solves the {\it{Beckmann-Black problem for $G$ over $k$}}, which asks whether any Galois extension of $k$ of group $G$ has a lifting extension with the same group.

\subsection{Parametric extensions over various fields} Let $H \subset G$ be two finite groups. We investigate below $H$-parametric extensions of group $G$ over various base fields $k$.

\subsubsection{$k$ is a PAC field}
Recall that a field $k$ is said to be {\it{PAC}} if every non-empty geometrically irreducible $k$-variety has a Zariski-dense set of $k$-rational points. Classical results show that in some sense PAC fields are ``abundant" \cite[theorem 18.6.1]{FJ05} and a concrete example (due to Pop) is the field $\mathbb{Q}^{\rm{tr}}(\sqrt{-1})$; here $\mathbb{Q}^{\rm{tr}}$ denotes the field of totally real numbers (algebraic numbers such that all conjugates are real). See \cite{FJ05} for more on PAC fields.

In the case $k$ is a PAC field, the situation is quite clear: \cite[theorem 3.2]{Deb99a} shows that any $k$-regular Galois extension of $k(T)$ of group $G$ (such an extension exists \cite{FV91} \cite{Pop96}) is parametric over $k$.

\subsubsection{$k$ is a finite field}
Since there are no (resp. only one) Galois extension of $k$ of group $H$ if $H$ is not cyclic (resp. if $H$ is cyclic), we trivially have that any $k$-regular Galois extension of $k(T)$ of group $G$ is $H$-parametric over $k$ if $H$ is not cyclic and $H'$-parametric over $k$ for at least one cyclic subgroup $H' \subset G$.

Moreover any $k$-regular Galois extension of $k(T)$ of group $G$ is known to be parametric over $k$ provided that $k$ is large enough (depending on $G$ and the branch point number) \cite{Fri74} \cite{Jar82} \cite{Eke90} \cite{DG11}. As in addition the group $G$ occurs as the Galois group of a $k$-regular Galois extension of $k(T)$ provided that $k$ is large enough (depending on $G$) \cite{FV91} \cite{Pop96} (see \cite[remark 3.9(a)]{DD97b} for more details), conclude that there exists at least one parametric extension over $k$ of group $G$ for large enough finite fields $k$.

\subsubsection{$k$ is a completion of $\mathbb{Q}$}

\noindent
\vspace{2.25mm}

\noindent
(a) $k=\mathbb{Q}_p$. Since any finite Galois extension of $\mathbb{Q}_p$ is solvable, we vacuously have that any $\mathbb{Q}_p$-regular Galois extension of $\mathbb{Q}_p(T)$ of group $G$ (such an extension exists \cite{Har87}) is $H$-parametric if $H$ is not solvable.

If $H$ is solvable, it does not hold in general. Indeed, given a $\mathbb{Q}$-regular Galois extension $E/\mathbb{Q}(T)$ of group $\mathbb{Z}/8\mathbb{Z}$, the extension $E\mathbb{Q}_2/\mathbb{Q}_2(T)$ is not $\mathbb{Z}/8\mathbb{Z}$-parametric over $\mathbb{Q}_2$. Otherwise there exists some specialization point $t_0 \in \mathbb{P}^1(\mathbb{Q}_2)$ such that $(E\mathbb{Q}_2)_{t_0}/\mathbb{Q}_2$ is the unramified extension of $\mathbb{Q}_2$ of degree $8$. From Krasner's lemma, one may assume that $t_0 \in \mathbb{P}^1(\mathbb{Q})$ and one then obtains a contradiction from \cite{Wan48}.

\vspace{2.25mm}

\noindent
(b) $k=\mathbb{R}$. Since the only finite extensions of $\mathbb{R}$ are the trivial one $\mathbb{R}/\mathbb{R}$ and the quadratic one $\mathbb{C}/\mathbb{R}$, we trivially have that any $\mathbb{R}$-regular Galois extension of $\mathbb{R}(T)$ of group $G$ (such an extension is known to be from a classical work of Hurwitz) is $H$-parametric over $\mathbb{R}$ if neither $H = \{1\}$ nor $H=\mathbb{Z}/2\mathbb{Z}$, and is $\{1\}$-parametric or $\mathbb{Z}/2\mathbb{Z}$-parametric over $\mathbb{R}$. In particular, any $\mathbb{R}$-regular Galois extension of $\mathbb{R}(T)$ of group $G$ is parametric over $\mathbb{R}$ if $G$ has odd order.

If $G$ has even order, there is at least one $\mathbb{Z}/2\mathbb{Z}$-parametric extension over $\mathbb{R}$ with group $G$ \footnote{The same claim also holds with $\mathbb{Z}/2\mathbb{Z}$ replaced by $\{1\}$ (see {\it{e.g.}} the proof of \cite[theorem 4.3.2]{Deb09}).}. Indeed, from \cite{Hur91} \cite {KN71}, it suffices to find an even integer $r\geq 2$ and an $(r+1)$-tuple $(g_0,g_1,\dots,g_r)$ of non trivial elements of $G$ such that

\noindent
- $g_1 \dots g_r = 1$ and $\langle g_1, \dots, g_r \rangle = G$,

\noindent
- $g_0$ has order 2,

\noindent
- $g_{r+1-i} = g_0 g_i^{-1} g_0$ for each index $i \in \{1,\dots,r/2\}$ \footnote{and the desired regular realization of $G$ over $\mathbb{R}$ even has no real branch point.}.

To do this, start from an element $g_1 \in G$ that has order 2. Next pick non trivial elements $g_2, \dots, g_r$ of $G$ such that $g_1, \dots, g_r$ generate $G$. Denote the elements $g_{r}^{-1}, \dots, g_{2}^{-1}$ by $g_{r+1}, \dots,  g_{2r-1}$ and set $g_0=g_1$. Then the following $(4r-1)$-tuple satisfies the desired conditions:
$$(g_0, g_0 g_{2r-1}^{-1} g_0, \dots, g_0 g_{1}^{-1} g_0, g_1, \dots, g_{2r-1})$$

\subsubsection{$k=\mathbb{Q}$} The situation in the case $k=\mathbb{Q}$ is more unclear.

\vspace{2.25mm}

\noindent
(a) If $G = \{1\}, \mathbb{Z}/2\mathbb{Z}, \mathbb{Z}/3\mathbb{Z}$ or $S_3$, there is a parametric extension over $\mathbb{Q}$ of group $G$. This comes from the fact that these four groups (are the only ones to) have a one parameter generic polynomial over $\mathbb{Q}$ \cite[page 194]{JLY02}. Below are some examples of parametric extensions over $\mathbb{Q}$ which are in fact provided by one parameter generic polynomials:

\vspace{1mm}

\noindent
(i) the trivial extension $\mathbb{Q}(T)/\mathbb{Q}(T)$,

\vspace{0.5mm}

\noindent
(ii) the $\mathbb{Q}$-regular quadratic extension $\mathbb{Q}(\sqrt{T})/\mathbb{Q}(T)$,

\vspace{0.5mm}

\noindent
(iii) the $\mathbb{Q}$-regular cyclic extension of $\mathbb{Q}(T)$ of degree 3 defined by the polynomial $Y^3-TY^2+(T-3)Y+1$ ({\it{e.g.}} \cite[\S2.1]{JLY02}),

\vspace{0.5mm}

\noindent
(iv) the $\mathbb{Q}$-regular Galois extension of $\mathbb{Q}(T)$ of group $S_3$ defined by the trinomial $Y^3+TY+T$ ({\it{e.g.}} \cite[\S2.1]{JLY02}).

\vspace{2.25mm}

\noindent
(b) If $G$ is none of these four groups, it is unknown whether there is a $G$-parametric extension over $\mathbb{Q}$ of group $G$ or not. In the case $H \not=G$, \cite[proposition 3.2.4]{Deb09} provides an $H$-parametric extension over $\mathbb{Q}$ of group $G$ in the case $H=\{1\}$ and $G$ abelian.

\vspace{2.25mm}

\noindent
(c) In addition to the example with $G=\mathbb{Z}/2\mathbb{Z}$ from the presentation, only a few negative examples are known.

\vspace{1mm}

\noindent
(i) No $\mathbb{Q}$-regular Galois extension of $\mathbb{Q}(T)$ of group $S_7$ and branch point set $\{0, 1, \infty\}$ is $S_7$-parametric over $\mathbb{Q}$: \cite[example 1.1]{Bec94} shows indeed that the Galois extension of $\mathbb{Q}$ of group $S_7$ defined by the polynomial $P(Y)= Y^7 + 42482 Y^6 + 5643 Y^5 -21164 Y^4 + 2431 Y^3 + 46189 Y^2 + 46189 Y + 46189$ cannot be a specialization of such an extension.

\vspace{1mm}

\noindent
(ii) For any finite group $G \not= \mathbb{Z}/2\mathbb{Z} \times \mathbb{Z}/2\mathbb{Z}$, $S_3$, $D_4$, $D_6$ which occurs as the Galois group of a totally real Galois extension of $\mathbb{Q}$, no $\mathbb{Q}$-regular Galois extension of $\mathbb{Q}(T)$ of group $G$ with three branch points is $G$-parametric over $\mathbb{Q}$: \cite[proposition 1.2]{DF90} shows indeed that no specialization of such an extension is totally real.

\section{First examples} This section is devoted to theorem 1 from the presentation. We use {\it{ad hoc}} arguments to give new examples of non $H$-parametric extensions over $\mathbb{Q}$ of group $G$. Our examples have $r \in \{2,3,4\}$ branch points ($\S$3.1-3). We also discuss the case $r \geq 5$ in $\S$3.4.

\subsection{An example with $r=2$}\footnote{More on $\mathbb{Q}$-regular finite Galois extensions of $\mathbb{Q}(T)$ with $r=2$ branch points can be found in \cite[\S2.3.2]{Leg13c}.} Proposition \ref{ad r=2} below unifies the two examples $\mathbb{Q}(T)(\sqrt{T^2+1})/\mathbb{Q}(T)$ and $\mathbb{Q}(\sqrt{T})/\mathbb{Q}(T)$:

\begin{proposition} \label{ad r=2}
Let $a$, $b$ and $c$ three rational numbers such that $b^2-4ac \not=0$ and $E=\mathbb{Q}(T)(\sqrt{aT^2+bT+c})$. Then the following three conditions are equivalent:

\vspace{0.5mm}

\noindent
{\rm{(1)}} $E/\mathbb{Q}(T)$ is parametric over $\mathbb{Q}$,

\vspace{0.5mm}

\noindent
{\rm{(2)}} $E/\mathbb{Q}(T)$ is $\mathbb{Z}/2\mathbb{Z}$-parametric over $\mathbb{Q}$,

\vspace{0.5mm}

\noindent
{\rm{(3)}} $b^2-4ac$ is a square in $\mathbb{Q}$.
\end{proposition}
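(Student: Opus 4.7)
The implication $(1) \Rightarrow (2)$ is immediate from definition \ref{ext st para}, so it is enough to establish $(3) \Rightarrow (1)$ and $(2) \Rightarrow (3)$. The strategy is to study the subset
\[ S := \bigl\{\,[a t_0^2 + b t_0 + c] \in \mathbb{Q}^*/\mathbb{Q}^{*2} \,:\, t_0 \in \mathbb{Q},\ a t_0^2 + b t_0 + c \neq 0\,\bigr\}; \]
by lemma \ref{spec}, the specialization of $E/\mathbb{Q}(T)$ at any such $t_0$ is $\mathbb{Q}(\sqrt{a t_0^2+b t_0+c})/\mathbb{Q}$, so $\mathbb{Z}/2\mathbb{Z}$-parametricity of $E/\mathbb{Q}(T)$ amounts to $S$ containing every non-trivial class of $\mathbb{Q}^*/\mathbb{Q}^{*2}$, while parametricity additionally requires $[1] \in S$. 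The key tool (when $a \neq 0$) is the identity $4a(a T^2 + b T + c) = (2a T + b)^2 - D$ with $D := b^2 - 4ac$, which rewrites each class of $S$ as $[a] \cdot [v^2 - D]$ for some $v = 2a t_0 + b \in \mathbb{Q}$, and $v^2 - D = N_{\mathbb{Q}(\sqrt{D})/\mathbb{Q}}(v + \sqrt{D})$ is a norm from $\mathbb{Q}(\sqrt{D})$ whenever $D$ is not a square.

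For $(3) \Rightarrow (1)$, assume $D$ is a square. If $a = 0$, then $b \neq 0$ and the affine map $t_0 \mapsto b t_0 + c$ sweeps every non-zero rational, hence every class. If $a \neq 0$, then $a T^2 + b T + c = a(T - \alpha)(T - \beta)$ with distinct rational $\alpha, \beta$; the parametrization $t_0 = (s \alpha - \beta)/(s-1)$ yields $t_0 - \alpha = (\alpha - \beta)/(s-1)$ and $t_0 - \beta = s(\alpha - \beta)/(s-1)$, so $[a(t_0 - \alpha)(t_0 - \beta)] = [a s]$; letting $s$ range over $\mathbb{Q}^* \setminus \{1\}$ (which still meets every square class) lets $[a s]$ cover all of $\mathbb{Q}^*/\mathbb{Q}^{*2}$. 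In either case $S = \mathbb{Q}^*/\mathbb{Q}^{*2}$, hence $E/\mathbb{Q}(T)$ is parametric over $\mathbb{Q}$.

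For $(2) \Rightarrow (3)$, I argue by contrapositive and assume $D$ is not a square; note that this forces $a \neq 0$, since $a = 0$ would make $D = b^2$ a square. Set $K := \mathbb{Q}(\sqrt{D})$, a genuine quadratic extension of $\mathbb{Q}$. The identity above forces $S \subset [a] \cdot N_{K/\mathbb{Q}}(K^*) \mathbb{Q}^{*2}/\mathbb{Q}^{*2}$. By Chebotarev's density theorem, there exist infinitely many primes $p$ inert in $K/\mathbb{Q}$; choose one not dividing the numerator or denominator of $a$. Then $[p] \in \mathbb{Q}^*/\mathbb{Q}^{*2}$ is non-trivial, and if $[p]$ belonged to $S$ the product $pa$ would lie in $N_{K/\mathbb{Q}}(K^*) \mathbb{Q}^{*2}$ and hence be a local norm at $p$. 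But the completion of $K$ at the unique prime above $p$ is an unramified quadratic extension of $\mathbb{Q}_p$, so its local norms have even $p$-adic valuation, whereas $v_p(pa) = 1$. This contradiction shows that $\mathbb{Q}(\sqrt{p})/\mathbb{Q}$ is not a specialization of $E/\mathbb{Q}(T)$, so $E/\mathbb{Q}(T)$ fails to be $\mathbb{Z}/2\mathbb{Z}$-parametric. This last step is the delicate one: one must pin down an explicit quadratic extension missing from the specialization set, and the combination of Chebotarev's density theorem (to produce an inert prime for the auxiliary field $K$) with the elementary valuation of local norms in an unramified quadratic extension provides the required obstruction.
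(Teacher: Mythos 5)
Your proof is correct, and while the overall strategy (exhibit extensions $\mathbb{Q}(\sqrt{p})$ that cannot be specializations, and use the rationality of the conic for the converse) matches the paper's, the execution of $(2)\Rightarrow(3)$ is genuinely different. The paper proceeds in three reduction steps (to integral coefficients, then to $b=0$), clears denominators to get $px^2=ay^2+cz^2$, reduces modulo $p$ to conclude that $-4ac$ is a square modulo almost every prime, and then invokes an external global-to-local theorem ([Hei67, theorem 9]) to deduce that $-4ac$ is a rational square. Your single identity $4a(aT^2+bT+c)=(2aT+b)^2-D$ packages all of this at once: it exhibits every specialization class as $[a]\cdot[N_{K/\mathbb{Q}}(v+\sqrt D)]$ with $K=\mathbb{Q}(\sqrt D)$, and the even-valuation property of norms from an unramified quadratic extension at an inert prime gives the obstruction directly, with Chebotarev (really just quadratic reciprocity here) supplying the inert primes. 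This buys you a contrapositive argument with no integrality reductions and no appeal to the square-detection theorem, at the cost of a slightly heavier-looking (but still elementary) local-norm computation; the two obstructions are of course the same one in different clothing, since "$D$ is a nonresidue mod $p$" is exactly "$p$ inert in $K$". Your $(3)\Rightarrow(1)$ via the explicit parametrization $t_0=(s\alpha-\beta)/(s-1)$ is likewise an explicit version of the paper's "conic with a rational point is rational" argument. One small point to tidy: since $\infty$ is not a branch point when $a\neq 0$, the definition of parametricity also allows $t_0=\infty$, whose specialization is $\mathbb{Q}(\sqrt a)$; your containment $S\subset[a]\cdot N_{K/\mathbb{Q}}(K^*)\mathbb{Q}^{*2}/\mathbb{Q}^{*2}$ covers this class as well (take $v^2-D$ replaced by the norm $1$), so the argument survives, but the set $S$ as you defined it over $t_0\in\mathbb{Q}$ should be enlarged by $[a]$ to literally equal the set of specialization classes.
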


Note that condition (3) is equivalent to the following:

\vspace{1.3mm}

\noindent
(4) {\it{the two branch points of $E/\mathbb{Q}(T)$ each is $\mathbb{Q}$-rational.}}

\vspace{1.3mm}

Indeed this easily follows from lemma \ref{grolem} below which will be used on several occasions in this paper. We omit the proof which involves very classical tools and which is detailed in \cite[\S2.3.2.1]{Leg13c}.

\begin{lemma} \label{grolem}
Let $k$ be a field of characteristic zero, $P(T) \in k[T]$ a separable polynomial over $k$, $n$ its degree and $\{t_1,\dots,t_n\}$ its root set. Then the quadratic extension $k(T)(\sqrt{P(T)})/k(T)$ is $k$-regular and its branch point set {\bf{t}} is

\noindent
{\rm{(1)}} ${\bf{t}} = \{t_1,\dots,t_n\}$ if $n$ is even,

\vspace{0.5mm}

\noindent
{\rm{(2)}} ${\bf{t}} = \{t_1,\dots,t_n\} \cup \{\infty \}$ if $n$ is odd.
\end{lemma}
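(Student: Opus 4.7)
The plan is to work inside $\overline{k}(T)$ throughout: all branch points, and the regularity condition, can be detected from the Kummer element $P(T)$ after base change to $\overline{k}$. Since the extension is quadratic and of Kummer type, a prime of $\overline{k}(T)$ will ramify in $\overline{k}(T)(\sqrt{P(T)})$ precisely when the valuation of $P(T)$ at that prime is odd, and the extension $k(T)(\sqrt{P(T)})/k(T)$ will be $k$-regular precisely when $P(T)$ fails to be a square in $\overline{k}(T)$.

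First I would establish regularity. Suppose for contradiction that $P(T)=\beta(T)^2$ for some $\beta\in\overline{k}(T)$, and write $\beta=Q/R$ with $Q,R\in\overline{k}[T]$ coprime. Then $R^2P=Q^2$; coprimality of $Q$ and $R$ forces $R$ to be a unit, so $\beta\in\overline{k}[T]$ and $P=\beta^2$ in $\overline{k}[T]$. But then every root of $P$ would appear with multiplicity at least two, contradicting the separability of $P$ (recall $n=\deg P\ge 1$, so $P$ is nonconstant). Hence $P$ is not a square in $\overline{k}(T)$, so $\overline{k}(T)(\sqrt{P(T)})/\overline{k}(T)$ is of degree $2$, which forces $[E\overline{k}:\overline{k}(T)]=[E:k(T)]=2$ and therefore $E\cap\overline{k}=k$.

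Next I would determine the branch points by computing, at each prime of $\overline{k}(T)$, the parity of the valuation of $P(T)$. For a finite prime $(T-t_{0})\,\overline{k}[T]$ with $t_{0}\in\overline{k}$, separability of $P$ implies that $v_{t_{0}}(P)$ equals $1$ if $t_{0}\in\{t_{1},\dots,t_{n}\}$ and $0$ otherwise; the former is odd and the latter even, so the finite branch points of $E/k(T)$ are exactly $t_{1},\dots,t_{n}$. For the prime at infinity, corresponding to $(1/T)\,\overline{k}[1/T]$, one has $v_{\infty}(P)=-\deg P=-n$, whose parity is the parity of $n$. Consequently $\infty$ is a branch point if and only if $n$ is odd, which yields the two cases (1) and (2) of the statement.

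The argument is essentially a valuation computation, and no step is genuinely hard; the only point requiring care is the regularity claim, where one must use separability of $P$ (not merely nontriviality) to rule out $P$ being a square in $\overline{k}(T)$, and thus distinguish the case $n\ge 1$ from the degenerate case $n=0$.
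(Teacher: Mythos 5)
Your proof is correct. The paper itself omits the proof of Lemma \ref{grolem}, deferring to \cite[\S 2.3.2.1]{Leg13c} with the remark that it ``involves very classical tools'', and your argument --- regularity via the degree count $[E\overline{k}:\overline{k}(T)]=[E:k(T)]=2$ forced by $P$ not being a square in $\overline{k}(T)$, and detection of ramification at each place of $\overline{k}(T)$ by the parity of the valuation of the Kummer element (which over the algebraically closed residue field $\overline{k}$ is an if-and-only-if criterion) --- is precisely that classical argument. Your observation that separability alone does not suffice and that one must use $n=\deg P\geq 1$ to exclude the degenerate constant case is a legitimate point of care, consistent with the lemma's implicit assumption that $P$ has a nonempty root set.
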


\begin{proof}[Proof of proposition \ref{ad r=2}]
We successively prove implications (3) $\Rightarrow$ (1), (1) $\Rightarrow$ (2) and (2) $\Rightarrow$ (3). Furthermore the proof will show the following:

\vspace{1.5mm}

\noindent
{\rm{(a)}} {\it{if condition (3) holds, then any quadratic or trivial extension of $\mathbb{Q}$ is the splitting extension over $\mathbb{Q}$ of some specialized polynomial $Y^2-(at_0^2 + bt_0 + c)$ with $t_0 \in \mathbb{Q}$}},

\vspace{1.5mm}

\noindent
{\rm{(b)}} {\it{if condition (3) does not hold, then there are infinitely many distinct quadratic extensions of $\mathbb{Q}$ which each is not a specialization of $E/\mathbb{Q}(T)$.}}

\vspace{2.25mm}

\noindent
{\it{(3) $\Rightarrow$ (1)}}. Assume that condition (3) holds. Let $t_1 \in \mathbb{Q}$ be a root of $aT^2+bT+c$ and $F/\mathbb{Q}$ a quadratic or trivial extension. Set $F=\mathbb{Q}(\sqrt{d})$ with $d$ a non-zero integer. 

The curve defined by the equation $dY^2 = aT^2+bT+c$ has a (non singular) $\mathbb{Q}$-rational point (for example $(0,t_1)$). Being of genus 0, it is then birational to $\mathbb{P}^1$ over $\mathbb{Q}$. Then there exist two rational numbers $y$ and $t_0$ such that $y \not=0$ and $dy^2 = at_0^2+bt_0+c$. Hence one has $F=\mathbb{Q}(\sqrt{at_0^2+bt_0+c})$, {\it{i.e.}} $F/\mathbb{Q}$ is the splitting extension over $\mathbb{Q}$ of the specialized polynomial $Y^2-(at_0^2+bt_0+c)$ (and so statement (a) holds). Since this polynomial is separable over $\mathbb{Q}$, one may apply lemma \ref{spec} and conclude that $F/\mathbb{Q}$ is the specialization $E_{t_0}/\mathbb{Q}$.

\vspace{2mm}

\noindent
{\it{(1) $\Rightarrow$ (2)}}. This is a consequence of definition \ref{ext st para}.

\vspace{2mm}

\noindent
{\it{(2) $\Rightarrow$ (3)}}. Assume that condition (2) holds. There are three steps to show that $b^2-4ac$ is a square in $\mathbb{Q}$.

\vspace{1.5mm}

\noindent
- {\it{Step 1}}: $a \in \mathbb{Z} \setminus \{0\}$, $b=0$ and $c \in \mathbb{Z} \setminus \{0\}$. First remark that $\infty$ is not a branch point since $a \not=0$ (lemma \ref{grolem}).

Let $p$ be a prime such that neither $a$ nor $c$ is a multiple of $p$ and that does not ramify in $E_\infty/\mathbb{Q}$. From condition (2), there exists some $t_0 \in \mathbb{Q}$ such that $E_{t_0} = \mathbb{Q}(\sqrt{p})$, {\it{i.e.}} $\mathbb{Q}(\sqrt{at_0^2+c}) = \mathbb{Q}(\sqrt{p})$ (lemmas \ref{spec} and \ref{grolem}). Hence there exists some non-zero rational number $\lambda$ such that $p\lambda^2=at_0^2+c$. Then there exist three non-zero integers $x$, $y$ and $z$ such that $px^2=ay^2+cz^2$ and one may assume that $z$ is not a multiple of $p$ (otherwise $y$ and $x$ are also multiples of $p$ and, with $n$ the $p$-adic valuation of $z$, one may then replace $(x,y,z)$ by $(x/p^n, y/p^n, z/p^n)$). By reducing modulo $p$, $-ac$ is a square modulo $p$.

Hence $Y^2 + 4ac$ has a root modulo $p$ for all but finitely many primes $p$ (note that this also holds if we only assume that all but finitely many quadratic extensions of $\mathbb{Q}$ are specializations of $E/\mathbb{Q}(T)$, so proving statement (b)). From {\it{e.g.}} \cite[theorem 9]{Hei67}\footnote{It seems that more elementary proofs exist in the quadratic case.}, $-4ac$ is a square in $\mathbb{Q}$.

\vspace{1.5mm}

\noindent
- {\it{Step 2: $(a,b,c) \in \mathbb{Z}^3$}}. Condition (3) trivially holds if $a=0$ or $c=0$. So assume that $a \not=0$ and $c \not=0$. Set $\Delta=b^2-4ac$. 

Let $p$ be a prime such that neither $a$ nor $\Delta$ is a multiple of $p$ and that does not ramifiy in $E_\infty /\mathbb{Q}$. From condition (2), there is some $t_0 \in \mathbb{Q}$ such that $\mathbb{Q}(\sqrt{p})/\mathbb{Q}=E_{t_0}/\mathbb{Q}$, {\it{i.e.}} $\mathbb{Q}(\sqrt{p})=\mathbb{Q}(\sqrt{a t_0^2+b t_0+c})$. Set ${t}_0'=2a t_0 +b$. As $a {t}_0'^2 -a \Delta=4a^2(at_0^2+bt_0+c)$, one has $\mathbb{Q}(\sqrt{a t_0^2+bt_0+c})=\mathbb{Q}(\sqrt{a {t}_0'^2 -a \Delta})$. From step 1, $4a^2 \Delta$ is a square in $\mathbb{Q}$ and so is $\Delta$ too.

\vspace{1.5mm}

\noindent
- {\it{Step 3: $(a,b,c) \in \mathbb{Q}^3$}}. Set $a=a_1/a_2$, $b=b_1/b_2$ and $c=c_1/c_2$ with integers $a_1, a_2, b_1,b_2,c_1,c_2$ such that $(a_1,a_2)=(b_1,b_2)=(c_1,c_2)=1$.

As $a_2^2  b_2^2  c_2^2  (a T^2+b T+c)= a_1  a_2  b_2^2  c_2^2  T^2 + b_1  b_2  a_2^2  c_2^2  T+ c_1  c_2  a_2^2  b_2^2$, one has $$E=\mathbb{Q}(T)(\sqrt{a_1 \, a_2 \, b_2^2 \, c_2^2 \, T^2 + b_1 \, b_2 \, a_2^2 \, c_2^2 \, T+ \,c_1 \, c_2 \, a_2^2 \, b_2^2})$$ From step 2, the discriminant $b_1^2 \, b_2^2 \, a_2^4 \, c_2^4 - 4 \, a_1 \, c_1 \, a_2^3 \, c_2^3 \, b_2^4 = (a_2 \, b_2 \, c_2)^4 (b^2-4\, ac)$ is a square in $\mathbb{Q}$. Hence condition (3) holds.
\end{proof}

\subsection{An example with $r=3$}
As for any abelian finite group, the Beckmann-Black problem for $\mathbb{Z}/2\mathbb{Z} \times \mathbb{Z}/2\mathbb{Z}$ over $\mathbb{Q}$ has a positive answer: any Galois extension $F/\mathbb{Q}$ of group $\mathbb{Z}/2\mathbb{Z} \times \mathbb{Z}/2\mathbb{Z}$ has a lifting extension $E_F/\mathbb{Q}(T)$ with the same group. 
Moreover \cite[corollary 2.4]{Bec94} shows that $E_F/\mathbb{Q}(T)$ may be chosen with three branch points.

The following shows however that none of these lifting extensions $E_F/\mathbb{Q}(T)$ with three branch points is $\mathbb{Z}/2\mathbb{Z} \times \mathbb{Z}/2\mathbb{Z}$-parametric over $\mathbb{Q}$:

\begin{proposition} \label{ad r=3}
Let $E/\mathbb{Q}(T)$ be a $\mathbb{Q}$-regular Galois extension of group $\mathbb{Z}/2\mathbb{Z} \times \mathbb{Z}/2\mathbb{Z}$ with three branch points. Then there exist infinitely many distinct Galois extensions of $\mathbb{Q}$ of group $\mathbb{Z}/2\mathbb{Z} \times \mathbb{Z}/2\mathbb{Z}$ which each is not a specialization of $E/\mathbb{Q}(T)$. In particular, this extension is not $\mathbb{Z}/2\mathbb{Z} \times \mathbb{Z}/2\mathbb{Z}$-parametric over $\mathbb{Q}$.
\end{proposition}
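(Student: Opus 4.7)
The plan has four steps.

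\textbf{Step 1 (rationality of the branch points).}
Let $V_{4}:=\mathbb{Z}/2\mathbb{Z}\times\mathbb{Z}/2\mathbb{Z}$, and let $\sigma_{1},\sigma_{2},\sigma_{3}$ be the three non-trivial elements of $\Gal(E/\mathbb{Q}(T))\cong V_{4}$. The three order-$2$ subgroups $\langle\sigma_{i}\rangle$ correspond to three $\mathbb{Q}$-regular quadratic subextensions $K_{i}=\mathbb{Q}(T)(\sqrt{P_{i}(T)})$ with $P_{i}\in\mathbb{Q}[T]$ square-free. A direct inertia computation identifies the branch locus of $K_{i}$ with the two-element subset $\{t_{j}:j\neq i\}$ of $\{t_{1},t_{2},t_{3}\}$, and Lemma \ref{grolem} identifies this set with the roots of $P_{i}$ together with $\infty$ iff $\deg P_{i}$ is odd. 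In particular, each of the three two-subsets of $\{t_{1},t_{2},t_{3}\}$ is stable under $G_{\mathbb{Q}}$; since the only permutation of $\{t_{1},t_{2},t_{3}\}$ preserving all three two-subsets simultaneously is the identity, each $t_{i}$ must be fixed by $G_{\mathbb{Q}}$ and hence lie in $\mathbb{P}^{1}(\mathbb{Q})$.

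\textbf{Step 2 (normalization).}
Any $\phi\in\mathrm{PGL}_{2}(\mathbb{Q})$ induces an automorphism of $\mathbb{Q}(T)/\mathbb{Q}$ which permutes the set of specializations of $E/\mathbb{Q}(T)$ at $\mathbb{Q}$-rational points along the bijection $\phi:\mathbb{P}^{1}(\mathbb{Q})\to\mathbb{P}^{1}(\mathbb{Q})$, so $V_{4}$-parametricity is preserved. Using the $3$-transitivity of $\mathrm{PGL}_{2}(\mathbb{Q})$ on $\mathbb{P}^{1}(\mathbb{Q})$, I may assume $\{t_{1},t_{2},t_{3}\}=\{0,1,\infty\}$. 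A further application of Lemma \ref{grolem} identifies each quadratic subextension and yields
\[
E=\mathbb{Q}(T)\bigl(\sqrt{a(T-1)},\sqrt{bT}\bigr)
\]
for some $a,b\in\mathbb{Q}^{\times}$.

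\textbf{Step 3 (reduction to conics).}
By Lemma \ref{spec}, the specialization at $t_{0}\in\mathbb{Q}\setminus\{0,1\}$ is $E_{t_{0}}=\mathbb{Q}(\sqrt{a(t_{0}-1)},\sqrt{bt_{0}})$. For $E_{t_{0}}$ to equal a given $V_{4}$-extension $\mathbb{Q}(\sqrt{p},\sqrt{q})$ with $p,q$ distinct primes, the unordered triple of square classes $\{[a(t_{0}-1)],[bt_{0}],[abt_{0}(t_{0}-1)]\}$ must equal $\{[p],[q],[pq]\}$ in $\mathbb{Q}^{\times}/(\mathbb{Q}^{\times})^{2}$. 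I would go through the six possible matchings: each is equivalent to the existence of a non-trivial $\mathbb{Q}$-rational point on a conic in two variables. For example, the matching $[a(t_{0}-1)]=[p]$, $[bt_{0}]=[q]$ reduces, upon writing $bt_{0}=qr^{2}$ and $a(t_{0}-1)=ps^{2}$ and eliminating $t_{0}$, to the conic $aq\,r^{2}-bp\,s^{2}=ab$. For primes $p,q\nmid 2ab$, any matching assigning $[pq]$ to $[a(t_{0}-1)]$ or $[bt_{0}]$ forces $p\mid ab$ or $q\mid ab$ and is therefore impossible.

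\textbf{Step 4 (local obstruction and infinitude of counterexamples).}
For each of the remaining matchings, I would combine Hasse--Minkowski with a local analysis at the prime $p$ (or $q$) to show that $\mathbb{Q}$-insolvability of the associated conic is implied by a Legendre-symbol condition such as $(qb/p)=-1$. Using Dirichlet's theorem on primes in arithmetic progressions together with quadratic reciprocity, one then produces infinitely many pairs $(p,q)$ of primes for which all the required Legendre conditions hold simultaneously. Each such pair yields a distinct $V_{4}$-extension $\mathbb{Q}(\sqrt{p},\sqrt{q})$ that is not a specialization of $E/\mathbb{Q}(T)$, giving the claimed infinitely many counterexamples. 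The main technical obstacle is the coherent Legendre-symbol bookkeeping in this last step, where the obstructions arising from several matchings must be enforced simultaneously; Step 1 is also somewhat delicate when $\infty$ lies among the branch points, but Lemma \ref{grolem} handles this case uniformly.
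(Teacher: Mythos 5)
Your Steps 1 and 2 follow the same route as the paper (rationality of the three branch points via the Galois-stable two-element branch loci of the quadratic subextensions, then a $\mathrm{PGL}_2(\mathbb{Q})$ normalization to $\{0,1,\infty\}$), and the reduction in Step 3 of each matching of square classes to a ternary conic is also the paper's strategy. But Step 3 contains a false claim that creates a genuine gap: you assert that for $p,q\nmid 2ab$ any matching assigning $[pq]$ to $[a(t_0-1)]$ or $[bt_0]$ forces $p\mid ab$ or $q\mid ab$. This is not true -- $t_0$ is a free rational parameter, so the square class of $bt_0$ (or of $a(t_0-1)$) can be \emph{any} class whatsoever; e.g.\ with $b=1$ and $t_0=pq$ one has $[bt_0]=[pq]$. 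Hence four of the six matchings are dismissed for no valid reason, and the corresponding four conics (e.g.\ the one coming from $[bt_0]=[pq]$, $[a(t_0-1)]=[p]$) still have to be shown insolvable. All six cases must go into Step 4.

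That makes Step 4 -- which you explicitly defer as ``coherent Legendre-symbol bookkeeping'' -- carry essentially the whole weight of the proof, with both $p$ and $q$ varying and reciprocity constraints to reconcile across six conics; this is exactly where the difficulty lies and it is not carried out. The paper avoids this entirely by a different choice of target extensions: it fixes one squarefree integer $d_2>0$ once and for all (chosen so that neither $ad_2$ nor $abd_2$ is a square) and takes the other discriminant to be $d_1=-p$ with $p$ a varying prime. After reducing to the sign configuration $a>0$, $b>0$ (the remaining sign cases being handled by real considerations or symmetrically), two of the six ternary forms then have all coefficients negative and are insolvable over $\mathbb{R}$, and the other four are killed simultaneously by the single condition that neither $ad_2$ nor $abd_2$ is a square mod $p$, which holds for infinitely many $p$. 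If you want to keep your choice $\mathbb{Q}(\sqrt{p},\sqrt{q})$ with two positive primes, you lose the sign trick and must genuinely solve the simultaneous residue conditions; either fix the case analysis and do that bookkeeping, or switch to the paper's asymmetric choice of discriminants.
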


\begin{proof}
Let $P_1(T)$ and $P_2(T)$ be two distinct separable polynomials over $\mathbb{Q}$ such that $E=\mathbb{Q}(T) (\sqrt{P_1(T)}, \sqrt{P_2(T)})$. 

Given $i \in \{1,2\}$, it follows from the extension $E/\mathbb{Q}(T)$ having three branch points and the $\mathbb{Q}$-regular quadratic one $\mathbb{Q}(T) (\sqrt{P_i(T)})/\mathbb{Q}(T)$ having an even branch point number (lemma \ref{grolem}) that the latter has two branch points. Consequently each branch point of $E/\mathbb{Q}(T)$ is $\mathbb{Q}$-rational. Hence we may assume that these branch points are 0, 1 and $\infty$. In particular, there exist two non-zero squarefree integers $a$ and $b$ such that $E=\mathbb{Q}(T)(\sqrt{a\, T},\sqrt{b\, T-b})$ (lemma \ref{grolem}).

Now fix two distinct squarefree integers $d_1$, $d_2$ and assume that $\mathbb{Q}(\sqrt{d_1},\sqrt{d_2}) = \mathbb{Q}(\sqrt{a\, t_0}, \sqrt{b\,t_0-b})$ for some $t_0 \in \mathbb{Q} \setminus \{0,1\}$. Then the quadratic subextensions coincide and one of these conditions holds:

\noindent
(i) $a\,d_1\,t_0 \in \mathbb{Q}^2$ and $d_2 \,(b\,t_0-b) \in \mathbb{Q}^2$,

\noindent
(ii) $a\,d_1\,t_0 \in \mathbb{Q}^2$ and $d_1\,d_2 \,(b\,t_0-b) \in \mathbb{Q}^2$,

\noindent
(iii) $a\,d_2\,t_0 \in \mathbb{Q}^2$ and $d_1 \,(b\,t_0-b) \in \mathbb{Q}^2$,

\noindent
(iv) $a\,d_2\,t_0 \in \mathbb{Q}^2$ and $d_1 \, d_2 \,(b\,t_0-b) \in \mathbb{Q}^2$,

\noindent
(v) $a\,d_1 \, d_2\,t_0 \in \mathbb{Q}^2$ and $d_1  \,(b\,t_0-b) \in \mathbb{Q}^2$,

\noindent
(vi) $a\, d_1 \, d_2\,t_0 \in \mathbb{Q}^2$ and $d_2 \,(b\,t_0-b) \in \mathbb{Q}^2$.

\noindent
Consequently one of the following six equations has a non trivial solution, {\it{i.e.}} a solution $(x,y,z) \in \mathbb{Z}^3$ such that $xyz \not=0$:

\noindent
(i) $a\,d_1\, X^2 - b\,d_2\, Y^2 - Z^2=0$,

\noindent
(ii) $a \,X^2 - b\,d_2 \, Y^2 - d_1\,Z^2=0$,

\noindent
(iii) $a\,d_2\, X^2 - b \,d_1 \, Y^2 - Z^2=0$,

\noindent
(iv) $a\, X^2 - b \, d_1 Y^2 - d_2 \, Z^2=0$,

\noindent
(v) $a \,  d_2 X^2  -b \, Y^2 - d_1 \, Z^2=0$,

\noindent
(vi) $a \, d_1 \, X^2 - b \, Y^2 - d_2 \, Z^2=0$.

\noindent
We show below that there are infinitely many distinct couples $(d_1,d_2)$ of distinct squarefree integers such that none of these six equations has a non trivial solution. In particular, the conclusion holds (lemma \ref{spec}).

One may assume that $a>0$ or $b<0$ (otherwise take $d_1 >0$ and $d_2 >0$ to conclude). Assume for example that $a>0$ and $b >0$ (the other two cases for which $a >0$ and $b <0$, $a<0$ and $b<0$ are similar). 

First assume that the squarefree integer $b$ satisfies $b \not=1$. Fix a squarefree integer $d_2>0$ such that neither $a\,b\, d_2$ nor $a \, d_2$ is a square in $\mathbb{Q}$. As $b \not =1$, the quadratic fields $\mathbb{Q}(\sqrt{a\,b\, d_2})$ and $\mathbb{Q}(\sqrt{a \, d_2})$ are distinct. Hence there are infinitely many distinct primes $p$ such that neither $a\,b\, d_2$ nor $a \, d_2$ is a square modulo $p$ ({\it{e.g.}} \cite[theorem 7]{Nag69}). Then, for such a prime $p$, none of the previous equations with $d_1=-p$ has a non trivial solution, {\it{i.e.}} none of the following equations does:

\noindent
(i) $-a\, p \, X^2 - b\, d_2 \, Y^2 -   Z^2=0$,

\noindent
(ii) $a\,X^2 - b \, d_2 \, Y^2 + p\,Z^2=0$,

\noindent
(iii) $a \, d_2 \, X^2 + p \, b\, Y^2 -  Z^2=0$,

\noindent
(iv) $a \,X^2 + p \, b \, Y^2 -  d_2 \,Z^2=0$,

\noindent
(v) $a\, d_2 \, X^2 - b \,Y^2 + p  \,Z^2=0$,

\noindent
(vi) $-a \,  p \,  X^2 - b \,Y^2 - d_2 \,Z^2=0$.

\noindent
Indeed, first note that neither equation (i) nor equation (vi) has such a solution (as all coefficients are negative). If one of equations (ii)-(v) has such a solution $(x,y,z)$, one may assume that $x$ is not a multiple of $p$ (otherwise $y$ and $z$ are also multiples of $p$ and, with $n$ the $p$-adic valuation of $x$, one may then replace $(x,y,z)$ by $(x/p^n, y/p^n, z/p^n)$). By reducing modulo $p$, $a \, d_2$ or $a \, b \, d_2$ is a square modulo $p$; a contradiction.

Now assume that $b=1$. Fix a squarefree integer $d_2 >0$ such that $a \, d_2$ is not a square in $\mathbb{Q}$. Hence there exist infinitely many distinct primes $p$ such that $a \, d_2$ is not a square modulo $p$ ({\it{e.g.}} \cite[theorem 9]{Hei67}). Then, for such a prime $p$, a similar argument as that in the case $b \not=1$ shows that none of the previous six equations with $d_1=-p$ has a non trivial solution, thus ending the proof.
\end{proof}

\begin{remark}
The proof and proposition \ref{ad r=2} show in particular that any quadratic subextension of $E/\mathbb{Q}(T)$ is parametric over $\mathbb{Q}$. However their compositum is not $\mathbb{Z}/2\mathbb{Z} \times \mathbb{Z}/2\mathbb{Z}$-parametric over $\mathbb{Q}$.
\end{remark}

\subsection{An example with $r=4$} Denote the splitting extension over $\mathbb{Q}(T)$ of $Y^3+T^2Y+T^2$ by $E/\mathbb{Q}(T)$. As this trinomial is absolutely irreducible and its discriminant $\Delta(T)=-4T^6-27T^4$ is not a square in $\overline{\mathbb{Q}}(T)$, $E/\mathbb{Q}(T)$ is $\mathbb{Q}$-regular and one has ${\rm{Gal}}(E/\mathbb{Q}(T))=S_3$.

\begin{proposition} \label{ad r=4}
The extension $E/\mathbb{Q}(T)$ is $H$-parametric over $\mathbb{Q}$ for no subgroup $H \subset S_3$. More precisely, for any non trivial subgroup $H \subset S_3$, there exist infinitely many distinct Galois extensions of $\mathbb{Q}$ of group $H$ which each is not a specialization of $E/\mathbb{Q}(T)$.
\end{proposition}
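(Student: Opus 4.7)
The plan is to work directly with the polynomial $P(T,Y)=Y^3+T^2Y+T^2$ and to exploit the sign of its $Y$-discriminant
$$\Delta(T)=-4T^6-27T^4=-T^4(4T^2+27),$$
which is strictly negative at every $t_0\in\mathbb{Q}\setminus\{0\}$. Since the four branch points of $E/\mathbb{Q}(T)$ lie in $\{0,\infty,\pm 3i\sqrt{3}/2\}$, the rational non-branch points are exactly the $t_0\in\mathbb{Q}\setminus\{0\}$, and for each such $t_0$ the polynomial $P(t_0,Y)$ is separable over $\mathbb{Q}$ (because $\Delta(t_0)\not=0$), so lemma \ref{spec} identifies the specialization $E_{t_0}/\mathbb{Q}$ with the splitting extension of $P(t_0,Y)$ over $\mathbb{Q}$.

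I would then split into three cases according to the $\mathbb{Q}$-factorization of $P(t_0,Y)$. In the \emph{irreducible} case, $\Delta(t_0)<0$ is not a square in $\mathbb{Q}$, so the Galois group of $E_{t_0}/\mathbb{Q}$ is $S_3$ and its unique quadratic subextension is $\mathbb{Q}(\sqrt{\Delta(t_0)})=\mathbb{Q}(\sqrt{-(4t_0^2+27)})$, an imaginary quadratic field. In the \emph{linear times irreducible quadratic} case $P(t_0,Y)=(Y-\alpha)Q(Y)$, the multiplicativity of the discriminant gives $\Delta(t_0)=\mathrm{disc}(Q)\cdot Q(\alpha)^2$ with $Q(\alpha)\not=0$ by separability, forcing $\mathrm{disc}(Q)<0$, so $E_{t_0}=\mathbb{Q}(\sqrt{\mathrm{disc}(Q)})$ is again imaginary quadratic. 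In the \emph{totally split} case, with roots $\alpha_1,\alpha_2,\alpha_3\in\mathbb{Q}$, writing $\alpha_3=-\alpha_1-\alpha_2$ (by Vieta) and substituting into $\alpha_1\alpha_2+\alpha_1\alpha_3+\alpha_2\alpha_3=t_0^2$ yields
$$t_0^2=-(\alpha_1^2+\alpha_1\alpha_2+\alpha_2^2)=-(\alpha_1+\alpha_2/2)^2-3\alpha_2^2/4\leq 0,$$
with equality only when $\alpha_1=\alpha_2=0$; this forces the excluded value $t_0=0$, so this case never occurs.

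The three cases now deliver the conclusion subgroup by subgroup. For $H=\{1\}$, the third case shows $\mathbb{Q}/\mathbb{Q}$ is never a specialization. For $H=\mathbb{Z}/3\mathbb{Z}$, the first two cases yield only groups $S_3$ or $\mathbb{Z}/2\mathbb{Z}$, so no cyclic cubic extension appears, and the infinitely many cubic subfields of $\mathbb{Q}(\zeta_p)$ with primes $p\equiv 1\pmod 3$ are all missed. For $H=\mathbb{Z}/2\mathbb{Z}$, every quadratic specialization is imaginary, so the infinitely many extensions $\mathbb{Q}(\sqrt p)/\mathbb{Q}$ with $p$ prime are missed. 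For $H=S_3$, every $S_3$-specialization contains the imaginary quadratic $\mathbb{Q}(\sqrt{-(4t_0^2+27)})$ and so fails to be totally real; each of the infinitely many totally real $S_3$-extensions of $\mathbb{Q}$ (for instance the splitting fields of $Y^3-nY+1$ for those integers $n\geq 2$ with $4n^3-27$ a positive non-square, which abound by Hilbert irreducibility) is therefore missed. The only slightly subtle step is the sign identity in the totally split case; the rest is direct verification of the factorization trichotomy.
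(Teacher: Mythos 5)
Your proposal is correct in substance and its engine is the same as the paper's: the specialized discriminant $-t_0^4(4t_0^2+27)$ is negative for every $t_0\in\mathbb{Q}\setminus\{0\}$, so the specialized cubic has exactly one real root and no such specialization is totally real. You repackage this through the factorization trichotomy and the imaginary quadratic resolvent, whereas the paper argues directly with total reality (odd-degree Galois extensions of $\mathbb{Q}$ are totally real, which disposes of $H=\mathbb{Z}/3\mathbb{Z}$; real quadratic fields and totally real $S_3$-extensions are then missed). These are the same argument in different clothing; in particular your Vieta computation in the split case is just the observation that a cubic with negative discriminant cannot have three real roots.

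The one step you assert without justification is that ``the rational non-branch points are exactly the $t_0\in\mathbb{Q}\setminus\{0\}$''. The containment of the branch point set in $\{0,\infty,\pm 3i\sqrt{3}/2\}$ is easy and legitimizes every $t_0\in\mathbb{Q}\setminus\{0\}$ as a specialization point, but whether $0$ and $\infty$ actually \emph{are} branch points is a separate verification (the paper devotes a Riemann--Hurwitz paragraph to the branch point count). This matters only for $H=\{1\}$: if $0$ or $\infty$ were unramified, the corresponding specialization would be one your trichotomy does not see, and you would still have to rule out that it is the trivial extension. Concretely, $0$ is a branch point (the Newton polygon of $Y^3+T^2Y+T^2$ at $T=0$ has the single slope $-2/3$, so the ramification index there is $3$), while at $\infty$ the substitution $Y=TZ$ turns the equation into $Z^3+Z+1/T=0$, whose fiber at $T=\infty$ is the separable polynomial $Z(Z^2+1)$; so if a specialization at $\infty$ exists it equals $\mathbb{Q}(i)/\mathbb{Q}$ by lemma \ref{spec}, again imaginary quadratic, and all of your conclusions survive. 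For the non-trivial subgroups the point is moot, since at most two extra specializations cannot affect ``infinitely many missed''. Finally, for $H=S_3$ you should either justify that the splitting fields of $Y^3-nY+1$ are pairwise distinct for infinitely many $n$ (not completely obvious) or simply invoke the known existence of infinitely many totally real $S_3$-extensions of $\mathbb{Q}$, as the paper does by citing the literature.
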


\begin{proof}
One easily shows that the branch point set {\bf{t}} of $E/\mathbb{Q}(T)$ is contained in $\{0,3i\sqrt{3}/2,-3i\sqrt{3}/2, \infty\}$. Hence {\bf{t}} contains some $\mathbb{Q}$-rational point and the two complex conjugate points $3i\sqrt{3}/2$, $-3i \sqrt{3}/2$.

Assume that $E/\mathbb{Q}(T)$ has three branch points. The Riemann-Hurwitz formula then shows that it has genus 0. Then there exists some transcendental element $U$ over $\mathbb{Q}$ such that $E\overline{\mathbb{Q}} = \overline{\mathbb{Q}}(U)$. Since $S_3$ is isomorphic to the finite group $\mathcal{D}$ generated by $\sigma$ and $\tau$ such that $\tau(U)=1/U$ and $\sigma(U)=e^{2i \pi/3}U$, one has $\overline{\mathbb{Q}}(T)=\overline{\mathbb{Q}}(U)^\mathcal{D}=\overline{\mathbb{Q}}(U^3+U^{-3})$ (since $U$ is a root ot the trinomial $Y^6-(U^3+U^{-3}) Y^3 + 1$). Moreover the branch point set of $\overline{\mathbb{Q}}(U)/\overline{\mathbb{Q}}(U^3+U^{-3})$ is contained in $\{-2,2,\infty\}$. In particular, any branch point of $E\overline{\mathbb{Q}}(T)/\overline{\mathbb{Q}}(T)$ should be $\mathbb{Q}$-rational; a contradiction. Hence the extension $E/\mathbb{Q}(T)$ has four branch points.

Given a non-zero rational number $t_0$, the specialized polynomial $Y^3+t_0^2Y+t_0^2$ is separable over $\mathbb{Q}$ and, from lemma \ref{spec}, the specialization $E_{t_0}/\mathbb{Q}$ is its splitting extension over $\mathbb{Q}$. Since this polynomial has only one real root, the specialization $E_{t_0}/\mathbb{Q}$ is not totally real. Hence the conclusion obviously holds for $H= \{1\}$ and $H=\mathbb{Z}/2\mathbb{Z}$. Moreover, since any finite Galois extension of $\mathbb{Q}$ of odd degree is totally real, the conclusion also holds for $H=\mathbb{Z}/3\mathbb{Z}$ \footnote{In fact no Galois extension of $\mathbb{Q}$ of group $\mathbb{Z}/3\mathbb{Z}$ is a specialization of $E/\mathbb{Q}(T)$.}. Finally, since it is known that there exist infinitely many distinct totally real Galois extensions of $\mathbb{Q}$ of group $S_3$ ({\it{e.g.}} \cite[proposition 2]{KM01}), the conclusion is also true for $H=S_3$, thus ending the proof.
\end{proof}

\subsection{The case $r \geq 5$} In this case, it seems difficult to give similar examples. However one has the following general conclusion:

\vspace{2mm}

\noindent
{\it{Let $G$ be a finite group, $H$ a subgroup of $G$ and $E/\mathbb{Q}(T)$ a $\mathbb{Q}$-regular Galois extension of group $G$ with $r \geq 5$ branch points. Then, given a Galois extension $F/\mathbb{Q}$ of group $H$, there exist only finitely many distinct points $t_0 \in \mathbb{P}^1(\mathbb{Q})$ (possibly none) such that the extension $F/\mathbb{Q}$ occurs as the specialization $E_{t_0}/\mathbb{Q}$ of $E/\mathbb{Q}(T)$ at $t_0$.}}

\vspace{2mm}

\noindent
Indeed denote the genus of $E/\mathbb{Q}(T)$ by g. The Riemann-Hurwitz for-\hbox{mula yields $2{\rm{g}} \geq 2 + [E  :  \mathbb{Q}(T)] ((r/2)-2)$. Hence ${\rm{g}} \geq 2$ and the con-} 

\noindent
{clusion} follows from the {\it{Faltings theorem}} as explained in \cite[$\S$3.3.5]{Deb99a}.

\section{Criteria for non parametricity}

This section is devoted to theorem \ref{methode} below which gives our most general criteria for a given $k$-regular Galois extension of $k(T)$ not to be parametric over $k$; it is the aim of $\S$4.1. We also give in $\S$4.2 several more practical forms of this statement which each will be used in the next three sections to obtain new examples of such extensions over various base fields $k$.

Let $A$ be a Dedekind domain of characteristic zero with infinitely many distinct primes and $k$ its quotient field. First recall the following:

\begin{definition}
Let $P(T) \in k[T]$ be a non constant polynomial and $\mathcal{P}$ a (non-zero) prime of $A$. We say that {\it{$\mathcal{P}$ is a prime divisor of $P(T)$}} if there exists some $t_0 \in k$ such that $P(t_0)$ is in the maximal ideal $\mathcal{P}A_\mathcal{P}$ of the localization $A_\mathcal{P}$ of $A$ at $\mathcal{P}$.
\end{definition}

\subsection{General result}

\subsubsection{Notation} Let $H$ be a non trivial finite group and $E_1/k(T)$ a $k$-regular Galois extension of group $H$. Denote its  branch point set by $\{t_{1,1},\dots,t_{r_1,1}\}$ and its inertia canonical invariant by $(C_{1,1},\dots,C_{r_1,1})$.

Recall some important notation from \cite{Leg13a}. For each index $i \in \{1,\dots,r_1\}$, denote the irreducible polynomial of $t_{i,1}$ (resp. of $1/t_{i,1}$ \footnote{Set $1/t_{i,1}=0$ if $t_{i,1} = \infty$ and $1/t_{i,1}=\infty$ if $t_{i,1}=0$.}) over $k$ by $m_{i,1}(T)$ (resp. by $m_{i,1}^*(T)$). Set $m_{i,1}(T) = 1$ if $t_{i,1}=\infty$ and $m_{i,1}^*(T)=1$ if $t_{i,1}=0$. Finally set $m_{E_1}(T) = \prod_{i=1}^{r_1} m_{i,1}(T)$ and $m_{E_1}^*(T) = \prod_{i=1}^{r_1} m_{i,1}^*(T)$.

Let $G$ be a finite group containing $H$ and $E_2/k(T)$ a $k$-regular Galois extension of group $G$. Define the same notation for $E_2/k(T)$. Moreover, given a conjugacy class $C$ of $H$, denote the conjugacy class in $G$ of elements of $C$ by $C^G$.

\subsubsection{Statement of the result} Consider the following two conditions:

\vspace{1.5mm}

\noindent
(Branch Point Hypothesis) {\it{there exist infinitely many distinct primes of $A$ which each is a prime divisor of $m_{E_1}(T)  \cdot m_{E_1}^*(T)$ but not of $m_{E_2}(T) \cdot m_{E_2}^*(T)$}},

\vspace{1.5mm}

\noindent
(Inertia Hypothesis) {\it{there exists some index $i \in \{1,\dots,r_1\}$ satisfying the following two conditions:

\vspace{0.5mm}

{\rm{(a)}} $m_{i,1}(T) \cdot m^*_{i,1}(T)$ has infinitely many distinct prime divisors,

\vspace{0.5mm}

{\rm{(b)}} the set $\{C_{1,2}^{a}, \dots, C_{r_2,2}^{a} \,  /  \, a \in \mathbb{N}\}$ does not contain $C_{i,1}^G$}}.

\begin{theorem} \label{methode}
Under either one of these two conditions, the following non parametricity condition holds:

\vspace{1mm}

\noindent
{\rm{(non parametricity)}} there exist infinitely many distinct finite Galois extensions of $k$ which each is not a specialization of $E_2/k(T)$ \footnote{In particular, the extension $E_2/k(T)$ is not parametric over $k$.}. 

\vspace{1mm}

\noindent
Furthermore these Galois extensions of $k$ may be obtained by specializing $E_1/k(T)$.
\end{theorem}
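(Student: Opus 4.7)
The approach is to produce, for each of infinitely many primes $\mathcal{P}$ of $A$ supplied by whichever hypothesis is in force, a point $t_0 \in k$ whose specialization $F := (E_1)_{t_0}/k$ is a Galois extension of group $H$ ramifying at $\mathcal{P}$ in a prescribed way, and then to argue that this ramification behavior is incompatible with $F/k$ being any specialization $(E_2)_{s_0}/k$. The central tool is the \emph{Specialization Inertia Theorem} of Beckmann (already used in \cite{Leg13a}): for all but finitely many primes $\mathcal{P}$ of $A$ and any $t_0 \in k$ that is not itself a branch point, $(E_1)_{t_0}/k$ is unramified at $\mathcal{P}$ unless $t_0$ meets some branch point $t_{i,1}$ modulo $\mathcal{P}$ --- equivalently $\mathcal{P}$ divides $m_{i,1}(t_0) m_{i,1}^*(t_0)$ in $A_\mathcal{P}$ --- in which case the inertia of a prime of $(E_1)_{t_0}/k$ above $\mathcal{P}$ is cyclic, generated by a power of a representative of $C_{i,1}$, with exponent controlled by the intersection multiplicity modulo $\mathcal{P}$. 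The analogous statement applies to $E_2/k(T)$.

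Under the Branch Point Hypothesis, after discarding the finitely many bad primes of both $E_1/k(T)$ and $E_2/k(T)$ I still have infinitely many primes $\mathcal{P}$ that divide some value of $m_{E_1}(T) m_{E_1}^*(T)$ but divide no value of $m_{E_2}(T) m_{E_2}^*(T)$. For each such $\mathcal{P}$ I pick $t_0 \in k$ with $\mathcal{P} \mid m_{E_1}(t_0) m_{E_1}^*(t_0)$; a $\mathcal{P}$-adic perturbation (keeping the meeting valid) combined with Hilbert's irreducibility theorem applied to $E_1/k(T)$ lets me arrange simultaneously that $t_0$ is not a branch point, that the meeting with some $t_{i,1}$ is transverse, and that $(E_1)_{t_0}/k$ has full Galois group $H$. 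Then $(E_1)_{t_0}/k$ is genuinely ramified at $\mathcal{P}$. Were it equal to some specialization $(E_2)_{s_0}/k$, the Specialization Inertia Theorem applied to $E_2/k(T)$ would force $\mathcal{P}$ to divide $m_{E_2}(s_0) m_{E_2}^*(s_0)$, contradicting the choice of $\mathcal{P}$. Different primes $\mathcal{P}$ then give extensions with different ramification loci in $A$, producing infinitely many pairwise distinct Galois extensions of $k$ of the required type.

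Under the Inertia Hypothesis, fix an index $i$ satisfying (a) and (b). Condition (a) supplies infinitely many prime divisors $\mathcal{P}$ of $m_{i,1}(T) m_{i,1}^*(T)$; discard again the bad primes of both extensions. For each remaining $\mathcal{P}$, choose $t_0 \in k$ with $\mathcal{P} \mid m_{i,1}(t_0) m_{i,1}^*(t_0)$, perturb $\mathcal{P}$-adically so that the meeting multiplicity is $1$, and apply Hilbert to keep the specialization group equal to $H$. The Specialization Inertia Theorem then gives that the inertia at $\mathcal{P}$ in $(E_1)_{t_0}/k$ is generated by an element of $C_{i,1}$, whose image in $G$ lies in $C_{i,1}^G$. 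If $(E_1)_{t_0}/k$ equalled some $(E_2)_{s_0}/k$, the same $\mathcal{P}$-inertia, viewed inside $G$, would be generated by an element of some $C_{j,2}^a$; hence $C_{i,1}^G$ would coincide with $C_{j,2}^{a'}$ for suitable $j$ and $a'$, contradicting condition (b). Distinctness of the resulting extensions is again obtained from the distinctness of their ramification loci.

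The main technical delicacy is the synchronization of several constraints on a single point $t_0$: it must meet a prescribed branch point $\mathcal{P}$-adically, it must meet it with the right intersection multiplicity so that the inertia lies in $C_{i,1}$ itself rather than in a proper power, it must not itself be a branch point, and it must produce the full Galois group $H$ upon specialization. This will be handled by combining $\mathcal{P}$-adic approximation in $k$ with the Hilbertian hypothesis on $k$, which for the fields we allow cooperates with valuation-theoretic congruence conditions. The assumption that $A$ has infinitely many primes is essential at precisely this point, since a fresh prime is needed to manufacture each new extension in the conclusion.
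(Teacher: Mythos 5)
Your proposal is correct and follows essentially the same route as the paper: both arguments construct, for each of infinitely many suitable primes $\mathcal{P}$, a specialization of $E_1/k(T)$ with prescribed ramification at $\mathcal{P}$ (the paper delegates this to \cite[theorem 3.1]{Leg13a}, whose proof is exactly the $\mathcal{P}$-adic approximation you sketch) and then rule out its being a specialization of $E_2/k(T)$ via the Specialization Inertia Theorem. The only cosmetic difference is that you invoke Hilbert's irreducibility to force the specialization group to be all of $H$, which the paper reserves for the addendum; for the bare (non parametricity) statement this is unnecessary, and it would require a hilbertianity assumption the theorem itself does not make.
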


Recall that a set $S$ of conjugacy classes of $H$ is called {\it{$g$-complete}} (a terminology due to Fried \cite{Fri95}) if no proper subgroup of $H$ intersects each conjugacy class in $S$. For instance, the set of all conjugacy classes of $H$ is g-complete \cite{Jor72}.

\vspace{3mm}

\noindent
{\it{Addendum}} 4.2. 
Under either one of the following two extra conditions:

\vspace{0.5mm}

\noindent
{\rm{(1)}} $k$ is hilbertian,

\vspace{0.5mm}

\noindent
{\rm{(2)}} there exists some subset $I \subset \{1,\dots,r_1\}$ satisfying the following two conditions:

\vspace{0.5mm}

{\rm{(a)}} $m_{i,1}(T) \cdot m^*_{i,1}(T)$ has infinitely many distinct prime divisors for 

each index $i \in I$,

\vspace{0.5mm}

{\rm{(b)}} the set $\{C_{i,1} \, / \, i \in I \}$ is g-complete,

\vspace{0.5mm}

\noindent
the following more precise non $H$-parametricity condition holds:

\vspace{1.5mm}

\noindent
({\rm{non $H$-parametricity}}) {\it{there exist infinitely many distinct Galois extensions of $k$ of group $H$ which each is not a specialization of $E_2/k(T)$.}}

\vspace{1.5mm}

\noindent
Moreover these Galois extensions of $k$ of group $H$ may be obtained by specializing $E_1/k(T)$ and, in the case the base field $k$ is assumed to be hilbertian, they may be further required to be linearly disjoint.

\vspace{2mm}

Theorem \ref{methode} is proved in $\S$4.3.

\subsection{Practical forms of theorem \ref{methode}} Continue with the notation from $\S$4.1.1. We now give four more practical forms of theorem \ref{methode}. The first one rests on a sharp variant of the Branch Point Hypothesis and the other ones each uses the Inertia Hypothesis.

\subsubsection{Branch Point Criterion} If $E_1/k(T)$ has at least one $k$-rational branch point $t_{i,1}$, then all but finitely many primes of $A$ obviously are prime divisors of $m_{i,1}(T) \cdot m_{i,1}^*(T)$, and so are of $m_{E_1}(T) \cdot m_{E_1}^*(T)$ too. Hence one obtains the following statement:

\vspace{3mm}

\noindent
{\bf{Branch Point Criterion.}} {\it{The {\rm{(non $H$-parametricity)}} condition\footnote{Here and in the next criteria, one can add as in theorem \ref{methode} that the Galois extensions of group $H$ whose existence is claimed may be obtained by specialization.} holds if the following three conditions are satisfied:

\vspace{0.5mm}

\noindent
{\rm{(BPC-1)}} $k$ is a number field,

\vspace{0.5mm}

\noindent
{\rm{(BPC-2)}} the extension $E_1/k(T)$ has at least one $k$-rational branch point,

\vspace{0.5mm}

\noindent
{\rm{(BPC-3)}} there exist infinitely many distinct primes of $A$ which each is not a prime divisor of $m_{E_2}(T) \cdot m_{E_2}^*(T)$.}}

\vspace{3mm}

An obvious necessary condition for condition (BPC-3) to hold is that $E_2/k(T)$ has no $k$-rational branch point. Moreover condition (BPC-1) may be replaced by either one of the two conditions of addendum 4.2.

\subsubsection{Inertia Criteria} Since part (b) of the Inertia Hypothesis does not depend on the base field $k$, one obtains the following three criteria in which the {\rm{(non $H$-parametricity)}} condition remains true after any finite scalar extension, {\it{i.e.}} in which the following holds:

\vspace{1.5mm}

\noindent
(geometric non $H$-parametricity) {\it{for any finite extension $k'/k$, there exist infinitely many distinct Galois extensions of $k'$ of group $H$ which each is not a specialization of $E_2k'/k'(T)$.}}

\vspace{1.5mm}

\noindent
Moreover, given a finite extension $k'/k$, these Galois extensions of $k'$ of group $H$ may be obtained by specializing $E_1k'/k'(T)$ and, in the case the base field $k$ is assumed to be hilbertian, they may be further required to be linearly disjoint.

\vspace{2.5mm}

\noindent
{\bf{Inertia Criterion 1.}} {\it{The {\rm{(geometric non $H$-parametricity)}} condition holds if the following three conditions are satisfied:

\vspace{0.5mm}

\noindent
{\rm{(IC1-1)}} each branch point of $E_1/k(T)$ is $k$-rational,

\vspace{0.5mm}

\noindent
{\rm{(IC1-2)}} there exists some index $i \in \{1,\dots,r_1\}$ such that $C_{i,1}^G$ is not contained in the set $\{C_{1,2}^{a}, \dots, C_{r_2,2}^{a} \,  /  \, a \in \mathbb{N}\}$,

\vspace{0.5mm}

\noindent
{\rm{(IC1-3)}} the set $\{C_{1,1},\dots,C_{r_1,1}\}$ is $g$-complete.}}

\vspace{2.5mm}

Indeed, given a finite extension $k'/k$, apply theorem \ref{methode} to the extensions $E_1k'/k'(T)$ and $E_2k'/k'(T)$. Fix an index $i \in \{1,\dots,r_H\}$ such that the set $\{C_{1,2}^{a}, \dots, C_{r_2,2}^{a} \,  /  \, a \in \mathbb{N}\}$ does not contain $C_{i,1}^G$ (condition (IC1-2)). Then part (b) of the Inertia Hypothesis holds for this index $i$. From condition (IC1-1), $t_{i,1}$ is $k'$-rational and then part (a) of the Inertia Hypothesis also holds for this $i$ (as noted at the beginning of $\S$4.2.1). As condition (2) of addendum 4.2 holds (with $I=\{1,\dots,r_1\}$) from conditions (IC1-1) and (IC1-3), the conclusion follows.

\vspace{2.5mm}

\noindent
{\bf{Inertia Criterion 2.}} {\it{The {\rm{(geometric non $H$-parametricity)}} condition holds if the following two conditions are satisfied:

\vspace{0.5mm}

\noindent
{\rm{(IC2-1)}} there is a $k$-rational branch point $t_{i,1}$ of $E_1/k(T)$ such that the set $\{C_{1,2}^{a}, \dots, C_{r_2,2}^{a} \,  /  \, a \in \mathbb{N}\}$ does not contain $C_{i,1}^G$,

\vspace{0.5mm}

\noindent
{\rm{(IC2-2)}} $k$ is hilbertian.}}

\vspace{3mm}

Indeed, given a finite extension $k'/k$, apply theorem \ref{methode} to the extensions $E_1k'/k'(T)$ and $E_2k'/k'(T)$. From condition (IC2-1), the Inertia Hypothesis is satisfied. As $k'$ is hilbertian from condition (IC2-2), {\it{i.e.}} condition (1) of addendum 4.2 is satisfied, the conclusion follows.

\vspace{2.5mm}

\noindent
{\bf{Inertia Criterion 3.}} {\it{The {\rm{(geometric non $H$-parametricity)}} condition holds if the following two conditions are satisfied:

\vspace{0.5mm}

\noindent
{\rm{(IC3-1)}} there exists some index $i \in \{1,\dots,r_1\}$ such that $C_{i,1}^G$ is not contained in $\{C_{1,2}^{a}, \dots, C_{r_2,2}^{a} \,  /  \, a \in \mathbb{N}\}$,

\vspace{0.5mm}

\noindent
{\rm{(IC3-2)}} $k$ is either a number field or a finite extension of a rational function field $\kappa(X)$ with $\kappa$ an arbitrary algebraically closed field of characteristic zero (and $X$ an indeterminate).}}

\vspace{2.5mm}

Indeed, given a finite extension $k'/k$, apply theorem \ref{methode} to the extensions $E_1k'/k'(T)$ and $E_2k'/k'(T)$. Fix an index $i \in \{1,\dots,r_1\}$ such that the set $\{C_{1,2}^{a}, \dots, C_{r_2,2}^{a} \,  /  \, a \in \mathbb{N}\}$ does not contain $C_{i,1}^G$ (condition (IC3-1)). Then part (b) of the Inertia Hypothesis holds for this index $i$. We show below that part (a) of the Inertia Hypothesis also holds for this $i$. As condition (1) of addendum 4.2 is satisfied from condition (IC3-2), the conclusion follows.

Indeed it follows from condition (IC3-2) that any non constant polynomial $P(T) \in k'[T]$ has infinitely many distinct prime divisors: this classically follows from the Tchebotarev density theorem ({\it{e.g.}} \cite[chapter 7, theorem (13.4)]{Neu99}) in the case $k$ is a number field (and so $k'$ is too) and this is left to the reader as an easy exercise in the function field case. 

\begin{remark} \label{forme 3}
Part (b) of the Inertia Hypothesis (and similar other of our conditions) has a stronger but more practical variant in terms of ramification indices instead of inertia canonical conjugacy classes.

Indeed, given an index $i \in \{1,\dots,r_1\}$, if the ramification index of $t_{j,2}$ in $E_2\overline{k}/\overline{k}(T)$ is a multiple of that of $t_{i,1}$ in $E_1\overline{k}/\overline{k}(T)$ for no index $j \in \{1,\dots,r_2\}$, then $\{C_{1,2}^{a}, \dots, C_{r_2,2}^{a} \,  /  \, a \in \mathbb{N}\}$ does not contain $C_{i,1}^G$.
\end{remark}

\subsection{Proof of theorem \ref{methode}} The proof rests on the main result of \cite{Leg13a} (theorem 3.1 there) and a classical result on the ramification in the specializations of a $k$-regular finite Galois extension of $k(T)$ (recalled as the ``Specialization Inertia Theorem" in \cite[$\S$2.2.3]{Leg13a}) and is similar to that of theorem 4.2 of \cite{Leg13a} (which is theorem \ref{methode} here in the special case $H=G$ and $k$ is a number field). We reproduce this proof below with the necessary adjustments for the bigger generality.

First assume that the Branch Point Hypothesis holds. Then there exists some index $i \in \{1,\dots,r_1\}$ such that the polynomial $m_{i,1}(T) \cdot m_{i,1}^*(T)$ has infinitely many distinct prime divisors $\mathcal{P}$ which each is not a prime divisor of $m_{E_2}(T)\cdot m_{E_2}^*(T)$. Furthermore, up to excluding finitely many of these primes, one may also assume that such a prime $\mathcal{P}$ satisfies the following two conditions:

\vspace{0.5mm}

\noindent
(i) $\mathcal{P}$ is a {\it{good prime for $E_1/k(T)$}} in the sense of \cite[definition 2.6]{Leg13a} and $m_{i,1}(T)$, $m_{i,1}^*(T)$ each has its coefficients in $A_\mathcal{P}$,

\vspace{0.5mm}

\noindent
(ii) $\mathcal{P}$ is a good prime for $E_2/k(T)$ and the polynomials $m_{j,2}(T)$ and $m_{j,2}^*(T)$ have their coefficients in $A_\mathcal{P}$ for each index $j \in \{1,\dots,r_2\}$.

\vspace{0.5mm}

\noindent
For such a prime $\mathcal{P}$, apply \cite[theorem 3.1]{Leg13a} to construct a specialization $F_\mathcal{P}/k$ of $E_1/k(T)$ which ramifies at $\mathcal{P}$. From \cite[corollary 2.12]{Leg13a}, $F_\mathcal{P}/k$ does not occur as a specialization of $E_2/k(T)$ and the conclusion follows. 

Now assume that the Inertia Hypothesis holds. From its part (a), there exist infinitely many distinct prime divisors $\mathcal{P}$ of $m_{i,1}(T) \cdot m^*_{i,1}(T)$ which each may be assumed as before to further satisfy conditions (i) and (ii) above. For such a prime $\mathcal{P}$, apply \cite[theorem 3.1]{Leg13a} to construct a specialization $F_\mathcal{P}/k$ of $E_1/k(T)$ whose inertia group at $\mathcal{P}$ is generated by some element of $C_{i,1}$. If $F_\mathcal{P}/k$ is a specialization of $E_2/k(T)$, then, from the Specialization Inertia Theorem, there exist some index $j \in \{1,\dots,r_2\}$ and some positive integer $a$ such that the inertia group of $F_\mathcal{P}/k$ at $\mathcal{P}$ is generated by some element of $C_{j,2}^a$. This contradicts part (b) of the Inertia Hypothesis. Hence $F_\mathcal{P}/k$ is not a specialization of $E_2/k(T)$ and the conclusion follows.

Furthermore assume that condition (2) of addendum \ref{methode} holds. Instead of \cite[theorem 3.1]{Leg13a}, use \cite[corollary 3.4 and remark 3.5]{Leg13a} in the previous two paragraphs. In each case, the extension $F_\mathcal{P}/k$ may be required to have Galois group $H$ and the (non $H$-parametricity) condition follows. In the case condition (1) holds, \cite[corollary 3.3]{Leg13a} should be used (instead of \cite[corollary 3.4 and remark 3.5]{Leg13a}) to obtain the (non $H$-parametricity) condition and the extra linearly disjointness condition.

\section{A general consequence over number fields}

Our method to obtain examples of non $G$-parametric extensions over a given base field $k$ with prescribed Galois group $G$ starts with the knowledge of two $k$-regular Galois extensions of $k(T)$ of group $G$ with some somehow incompatible ramification data. Over number fields, the state-of-the-art in inverse Galois theory does not always provide such extensions in general. Proposition \ref{prop non conj}, our conditional result, provides an inverse Galois theory assumption which makes the method work. This statement leads in particular to corollary \ref{coro non conj} which is theorem 2 from the presentation. Corollary \ref{fri}, our conjectural result, is the corresponding result under a conjecture of Fried.

\subsection{The number field case} Let $k$ be a number field and $G$ a finite group. Denote the set of all conjugacy classes of $G$ by ${\bf{cc}}(G)$. 

\subsubsection{Conditional result}

To make the rest of this section simpler, we will use the following condition:

\vspace{2mm}

\noindent
(H1/$k$) {\it{each non trivial conjugacy class of $G$ occurs as the inertia canonical conjugacy class associated with some branch point of some $k$-regular Galois extension of $k(T)$ of group $G$.}}

\vspace{2mm}

It is unknown in general if any finite group satisfies the inverse Galois theory condition (H1/$k$) for a given number field $k$. However, as recalled below, every finite group satisfies condition (H1/$k$) for suitable number fields $k$. Indeed the Riemann existence theorem classically provides the following statement ({\it{e.g.}} \cite[\S12]{Deb01a}):

\vspace{2mm}

\noindent
($*$) {\it{Any set $\{C_1,\dots,C_r\}$ of non trivial conjugacy classes of $G$ whose all elements generate $G$ occurs as the inertia canonical conjugacy class set of some Galois extension of $\overline{\mathbb{Q}}(T)$ of group $G$.}}

\vspace{2mm}

\noindent
In particular, there exists some Galois extension $\overline{E}/\overline{\mathbb{Q}}(T)$ of group $G$ whose inertia canonical conjugacy class set is the set of all non trivial conjugacy classes of $G$. Hence condition (H1/$k$) holds over any number field $k$ that is a field of definition of $\overline{E}/\overline{\mathbb{Q}}(T)$.

\begin{proposition} \label{prop non conj}
Let $E/k(T)$ be a $k$-regular Galois extension of group $G$ and inertia canonical invariant $(C_1,\dots,C_r)$. Assume that the following condition holds:

\noindent
{\rm{(H2)}} $\{C_1^a,\dots,C_r^a \, / \,  a \in \mathbb{N} \} \not= {\bf{cc}}(G)$.

\vspace{0.5mm}

\noindent
Then, under condition {\rm{(H1/$k$)}}, the extension $E/k(T)$ satisfies the {\rm{(geometric non $G$-parametricity)}} condition.
\end{proposition}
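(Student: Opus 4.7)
The plan is to deduce Proposition \ref{prop non conj} as a direct application of Inertia Criterion 3 from \S4.2.2, using the hypothesis (H1/$k$) to manufacture the auxiliary extension $E_1/k(T)$ that the criterion requires.

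First I would use hypothesis (H2) to select a witness. Since the set $\{C_1^a,\dots,C_r^a \, / \, a \in \mathbb{N}\}$ is a proper subset of $\mathbf{cc}(G)$ and contains the trivial conjugacy class (taking $a=0$, or by noting it is stable under taking powers), there exists at least one \emph{non trivial} conjugacy class $C$ of $G$ which does not belong to $\{C_1^a,\dots,C_r^a \, / \, a \in \mathbb{N}\}$.

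Next I invoke (H1/$k$): there exists a $k$-regular Galois extension $E_1/k(T)$ of group $G$ and some branch point of $E_1/k(T)$ whose associated inertia canonical conjugacy class equals $C$. Denote the inertia canonical invariant of $E_1/k(T)$ by $(C_{1,1},\dots,C_{r_1,1})$ and pick an index $i_0 \in \{1,\dots,r_1\}$ with $C_{i_0,1} = C$.

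Then I apply Inertia Criterion 3 with $H=G$, the auxiliary extension $E_1/k(T)$ just produced, and $E_2/k(T) := E/k(T)$ (whose inertia canonical invariant $(C_{1,2},\dots,C_{r_2,2})$ is precisely $(C_1,\dots,C_r)$). Since $H=G$, one has $C_{i_0,1}^G = C_{i_0,1} = C$. By the choice of $C$,
\[
C_{i_0,1}^G = C \notin \{C_{1,2}^{a}, \dots, C_{r_2,2}^{a} \,  /  \, a \in \mathbb{N}\} = \{C_1^a,\dots,C_r^a \, / \, a \in \mathbb{N}\},
\]
so condition (IC3-1) is satisfied with the index $i_0$. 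Condition (IC3-2) holds because $k$ is assumed to be a number field. Inertia Criterion 3 then yields precisely the (geometric non $G$-parametricity) condition for $E_2/k(T) = E/k(T)$, which is the conclusion.

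There is essentially no obstacle: the statement is set up so that (H2) directly supplies condition (IC3-1) and (H1/$k$) directly supplies the auxiliary extension $E_1/k(T)$ realising the missing conjugacy class $C$ as an inertia canonical class. The only point worth double checking is that the trivial conjugacy class does not cause trouble in (H2), which is handled by noting it lies in $\{C_1^a,\dots,C_r^a\,/\, a\in\mathbb{N}\}$ (e.g.\ $a=0$), so (H2) genuinely forces the existence of a \emph{non trivial} missing class, as required by the formulation of (H1/$k$).
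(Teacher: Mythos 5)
Your proof is correct and follows essentially the same route as the paper: pick a conjugacy class $C$ missing from $\{C_1^a,\dots,C_r^a \,/\, a\in\mathbb{N}\}$ via (H2), realise it as an inertia canonical class of an auxiliary $k$-regular $G$-extension via (H1/$k$), and apply Inertia Criterion 3 with $H=G$. Your explicit check that the missing class $C$ must be non trivial (so that (H1/$k$) applies) is a small point the paper leaves implicit, but otherwise the arguments coincide.
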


In particular, under the sole condition (H2), there exists some number field $k'$ containing $k$ such that the extension $Ek'/k'(T)$ satisfies the {\rm{(geometric non $G$-parametricity)}} condition.

\begin{proof}
Let $C$ be a conjugacy class of $G$ which is not in $\{C_1^a,\dots,C_r^a \, / \,  a \in \mathbb{N} \}$ (condition (H2)) and $E'/k(T)$ a $k$-regular Galois extension of group $G$ such that $C$ occurs as the inertia canonical conjugacy class associated with one of its branch points (condition (H1/$k$)). Then the two extensions $E'/k(T)$ and $E/k(T)$ satisfy condition (IC3-1) of Inertia Criterion 3. As condition (IC3-2) also holds, the conclusion follows.
\end{proof}

Now assume that the following group theoretical condition holds:

\vspace{2mm}

\noindent
{\it{There exists some set $\{C_1,\dots,C_r\}$ of non trivial conjugacy classes of $G$ satisfying the following two conditions:}}

\vspace{0.5mm}

\noindent
{\rm{(1)}} {\it{the elements of $C_1,\dots,C_r$ generate $G$, }} 

\vspace{0.5mm}

\noindent
{\rm{(2)}} $\{C_1^a , \dots, C_r^a\, / \,   a \in \mathbb{N} \} \not= {\bf{cc}}(G)$. 

\vspace{2mm}

\noindent
Then such a set $\{C_1,\dots,C_r\}$ occurs as the inertia canonical conjugacy class set of some $k'$-regular Galois extension $E'/k'(T)$ of group $G$ for some number field $k'$ satisfying condition (H1/$k'$) (condition (1) and statement ($*$)). Moreover the extension $E'/k'(T)$ satisfies condition (H2) of proposition \ref{prop non conj} (condition (2)). One then obtains the following:

\begin{corollary} \label{coro non conj}
There exist some number field $k'$ and some $k'$-regular Galois extension of $k'(T)$ of group $G$ satisfying the {\rm{(geometric non $G$-parametricity)}} condition.
\end{corollary}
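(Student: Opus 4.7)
The plan is to combine statement $(*)$ with the ``In particular'' remark following Proposition \ref{prop non conj}. Start from a set $\{C_1, \ldots, C_r\}$ of non-trivial conjugacy classes of $G$ satisfying the two group theoretical conditions. By condition (1), the elements of $C_1 \cup \cdots \cup C_r$ generate $G$, so statement $(*)$ (a consequence of the Riemann existence theorem) yields a Galois extension $\overline{E}/\overline{\mathbb{Q}}(T)$ of group $G$ whose inertia canonical invariant is $(C_1, \ldots, C_r)$.

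Next, I would pick any number field $k$ which is a field of definition of $\overline{E}/\overline{\mathbb{Q}}(T)$. This produces a $k$-regular Galois extension $E/k(T)$ of group $G$ whose inertia canonical invariant, being a purely geometric datum, is still $(C_1, \ldots, C_r)$. Condition (2) of the group theoretical hypothesis then reads exactly as condition (H2) of Proposition \ref{prop non conj} applied to $E/k(T)$.

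To conclude, I would invoke the ``In particular'' clause stated right after Proposition \ref{prop non conj}: since $E/k(T)$ satisfies (H2), there exists some number field $k'$ containing $k$ such that $Ek'/k'(T)$ satisfies the (geometric non $G$-parametricity) condition. Setting $E' := Ek'$ (which remains $k'$-regular of group $G$) gives the required extension.

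There is no real obstacle here, as all of the substance lies in Proposition \ref{prop non conj} and statement $(*)$, which have already been established. The only mild point to be careful about is the passage from $\overline{\mathbb{Q}}$ to a number field of definition: one must ensure that both the group $G$ and the inertia canonical invariant $(C_1, \ldots, C_r)$ descend correctly, but this is exactly the content of the Riemann existence theorem setup used to formulate $(*)$, and enlarging the field of definition if necessary causes no loss. Thus the proof is essentially an assembly of previously proven ingredients in the appropriate order.
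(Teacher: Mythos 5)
Your proposal is correct and follows essentially the same route as the paper: both arguments take the set $\{C_1,\dots,C_r\}$ given by the group-theoretical condition, realize it via statement $(*)$ over a number field of definition so that (H2) holds, and then conclude by Proposition \ref{prop non conj} after enlarging the base field so that (H1/$k'$) is available. The only cosmetic difference is that the paper chooses $k'$ satisfying (H1/$k'$) from the outset, whereas you first descend to an arbitrary field of definition and then invoke the ``In particular'' clause to enlarge it, which amounts to the same thing.
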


Many finite groups satisfy the above group theoretical condition (and then the conclusion of corollary \ref{coro non conj}). Here are some of them.

\vspace{2mm}

\noindent
(a) Given two non trivial finite groups $G_1$ and $G_2$, the product $G_1 \times G_2$ does (in particular, any abelian finite group which is not cyclic of prime power order does\footnote{Note that this does not hold if $G$ is cyclic of prime power order.}).

Indeed the elements, and {\it{a fortiori}} their conjugacy classes, $(g_1,1)$ ($g_1 \in G_1$) and $(1,g_2)$ ($g_2 \in G_2$) obviously generate the product $G_1 \times G_2$. And no couple of non trivial elements $(g_1,g_2) \in G_1 \times G_2$ is conjugate to a power of one of these couples.

\vspace{2mm}

\noindent
(b) Symmetric groups $S_n$ ($n \geq 3$), alternating groups $A_n$ ($n \geq 4$) and dihedral groups $D_n$ ($n \geq 2$) obviously do.

\vspace{2mm}

\noindent
(c) Non abelian simple groups do. Indeed, as shown in \cite{Wag78} and \cite{MSW94}, such a group may be generated by involutions. Then, for any odd prime divisor $p$ of the order of the group, no element of order $p$ is conjugate to a power of an involution and the conclusion follows.

\subsubsection{Conjectural result}
By taking $\{C_1,\dots,C_r\}$ to be the set of all non trivial conjugacy classes of $G$ in the following conjecture of Fried, the inverse Galois theory condition (H1/$\mathbb{Q}$) from \S5.1.1 holds:

\vspace{1.75mm}

\noindent
Conjecture (Fried). {\it{Let $\{C_1,\dots,C_r\}$ be a set of non trivial conjugacy classes of $G$ satisfying the following two conditions:

\vspace{0.5mm}

\noindent
{\rm{(1)}} the elements of $C_1,\dots,C_r$ generate $G$,

\vspace{0.5mm}

\noindent
{\rm{(2)}} $\{C_1,\dots,C_{r}\}$ is a rational\footnote{{\it{i.e.}} $g^m \in \cup_{i=1}^{r} C_i$ for each element $g \in \cup_{i=1}^{r} C_i$ and each positive integer $m$ relatively prime to the least common multiple of the orders of the elements of $C_1,\dots,C_{r}$.} set of conjugacy classes.

\vspace{0.5mm}

\noindent
Then $\{C_1,\dots,C_r\}$ occurs as the inertia canonical conjugacy class set of some $\mathbb{Q}$-regular Galois extension of $\mathbb{Q}(T)$ of group $G$.}}

\vspace{1.75mm}

Under Fried's conjecture, one then obtains corollary \ref{fri} below:

\begin{corollary} \label{fri}
Assume that there exists some set $\{C_1,\dots,C_r\}$ of non trivial conjugacy classes of $G$ satisfying the following three conditions:

\vspace{0.5mm}

\noindent
{\rm{(1)}} the elements of $C_1,\dots,C_r$ generate $G$,

\vspace{0.5mm}

\noindent
{\rm{(2)}} $\{C_1,\dots,C_r\}$ is a rational set of conjugacy classes,

\vspace{0.5mm}

\noindent
{\rm{(3)}} $\{C_1^a , \dots, C_r^a\, / \,   a \in \mathbb{N} \} \not= {\bf{cc}}(G)$. 

\vspace{0.5mm}

\noindent
Then there exists some $\mathbb{Q}$-regular Galois extension of $\mathbb{Q}(T)$ of group $G$ satisfying the {\rm{(geometric non $G$-parametricity)}} condition.
\end{corollary}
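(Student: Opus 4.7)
The plan is to derive Corollary \ref{fri} by combining Fried's conjecture with Proposition \ref{prop non conj}, used twice: once to ensure the inverse Galois theory hypothesis (H1/$\mathbb{Q}$) holds, and once to produce a concrete extension $E/\mathbb{Q}(T)$ satisfying condition (H2).

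First, I would verify that under Fried's conjecture the condition (H1/$\mathbb{Q}$) of \S5.1.1 is automatic. For this, apply Fried's conjecture with $\{C_1,\dots,C_r\}$ taken to be the full collection of all non-trivial conjugacy classes of $G$. This set trivially generates $G$, and it is rational: if $g \in G$ is non-trivial and $m$ is a positive integer coprime to the least common multiple of the orders of the non-trivial elements of $G$, then $g^m$ is again non-trivial and hence lies in some non-trivial conjugacy class. Fried's conjecture then yields a single $\mathbb{Q}$-regular Galois extension of $\mathbb{Q}(T)$ of group $G$ whose inertia canonical conjugacy class set is exactly the set of all non-trivial conjugacy classes, which directly witnesses (H1/$\mathbb{Q}$).

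Next, apply Fried's conjecture a second time to the set $\{C_1,\dots,C_r\}$ appearing in the hypotheses of the corollary. Conditions (1) and (2) of the corollary are precisely conditions (1) and (2) of Fried's conjecture, so one obtains a $\mathbb{Q}$-regular Galois extension $E/\mathbb{Q}(T)$ of group $G$ whose inertia canonical conjugacy class set is exactly $\{C_1,\dots,C_r\}$. Condition (3) of the corollary then says precisely that $E/\mathbb{Q}(T)$ satisfies hypothesis (H2) of Proposition \ref{prop non conj}.

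Finally, I would apply Proposition \ref{prop non conj} over the number field $k = \mathbb{Q}$ to the extension $E/\mathbb{Q}(T)$: since both (H1/$\mathbb{Q}$) and (H2) have just been established, the proposition gives that $E/\mathbb{Q}(T)$ satisfies the (geometric non $G$-parametricity) condition, as required. There is essentially no obstacle here, since Proposition \ref{prop non conj} has already been proved and Fried's conjecture is being assumed as a black box; the only minor point to check carefully is the verification that the set of all non-trivial conjugacy classes is rational in the sense of footnote definition used in the statement of Fried's conjecture, which as indicated above is immediate.
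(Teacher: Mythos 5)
Your proposal is correct and follows exactly the paper's argument: use Fried's conjecture on the set of all non-trivial conjugacy classes to secure (H1/$\mathbb{Q}$), use it again on $\{C_1,\dots,C_r\}$ via hypotheses (1) and (2) to produce $E/\mathbb{Q}(T)$, note that hypothesis (3) is condition (H2), and conclude by Proposition \ref{prop non conj} with $k=\mathbb{Q}$. Your explicit check that the full set of non-trivial classes is rational is a welcome detail the paper leaves implicit.
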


Indeed, under Fried's conjecture, conditions (1) and (2) provide a $\mathbb{Q}$-regular Galois extension $E/\mathbb{Q}(T)$ of group $G$ whose inertia canonical conjugacy class set is $\{C_1,\dots,C_r\}$. Moreover $E/\mathbb{Q}(T)$ satisfies condition (H2) of proposition \ref{prop non conj} (condition (3)) and, as already noted, condition (H1/$\mathbb{Q}$) also holds under Fried's conjecture.

\subsection{Other base fields} Assume in this subsection that the base field $k$ is a finite extension of a rational function field $\kappa(X)$ with $\kappa$ an arbitrary algebraically closed field of characteristic zero (and $X$ an indeterminate).

In this case, condition (H1/$k$) is satisfied (statement ($*$)). Conjoining this and the proof of proposition \ref{prop non conj} shows that the conclusion of this statement holds under the sole condition (H2). Moreover, under the group theoretical condition from \S5.1.1\footnote{introduced before corollary \ref{coro non conj}.}, corollary \ref{coro non conj} holds with the suitable number field $k'$ replaced by our given base field $k$.

\section{Applications of the Branch Point Criterion}

Given a number field $k$ and a finite group $H$, we use below the Branch Point Criterion to show that some known $k$-regular finite Galois extensions of $k(T)$ of group $G$ containing $H$ satisfy the {\rm{({\rm{non $H$-parametricity}})}} condition.

\subsection{A general result} The aim of this subsection is corollary \ref{data} below. Given a field $k$ and a finite group $H$, we will use the following condition which has already appeared in \cite{Leg13a}:

\vspace{2mm}

\noindent
(H3/$k$) {\it{the group $H$ occurs as the Galois group of a $k$-regular Galois extension of $k(T)$ with at least one $k$-rational branch point.}}

\vspace{2mm}

Not all finite groups satisfy condition (H3/$k$) for a given number field $k$. For example, if $k \subset \mathbb{R}$, such a group should be of even order \cite[corollary 1.3]{DF90}. However, as recalled in \cite[$\S$4.3.1]{Leg13a}, any finite group satisfies condition (H3/$k$) for suitable number fields $k$.

\subsubsection{Statement of the result} 
Let $k$ be a number field, $G$ a finite group and $E/k(T)$ a $k$-regular Galois extension of group $G$. Denote the orbits of its branch points under the action of ${\rm{G}}_k$ by $O_1,\dots, O_s$ and, for each $i \in \{1,\dots,s\}$, the field generated over $k$ by all points in $O_i$ by $F_i$.

\begin{corollary} \label{data}
Assume that either one of the following two conditions is satisfied:

\vspace{0.5mm}

\noindent
{\rm{(1)}} $|O_i| \geq 2$ and the fields $F_1, \dots,F_s$ are linearly disjoint over $k$ \footnote{{\it{i.e.}} $F_i$ and $F_1 \dots F_{i-1} F_{i+1} \dots  F_s$ are linearly disjoint over $k$ ($i=1,\dots,s$).},

\vspace{0.5mm}

\noindent
{\rm{(2)}} $s=2$ and $|O_1|=|O_2|=2$.

\vspace{0.5mm}

\noindent
Then the extension $E/k(T)$ satisfies the {\rm{({\rm{non $H$-parametricity}})}} condition for any subgroup $H \subset G$ satisfying condition {\rm{(H3/$k$)}}.
\end{corollary}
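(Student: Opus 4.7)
The plan is to apply the Branch Point Criterion of \S4.2.1 with $E_2 := E$ and with $E_1/k(T)$ a $k$-regular Galois extension of group $H$ possessing at least one $k$-rational branch point, which is exactly what condition (H3/$k$) furnishes. Condition (BPC-1) holds by assumption on $k$, and (BPC-2) is then immediate, so everything reduces to verifying (BPC-3): there exist infinitely many primes of $\mathcal{O}_k$ that are not prime divisors of $m_E(T)\cdot m_E^*(T)$.

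The key reformulation is via the Chebotarev density theorem. Let $L$ denote the Galois closure over $k$ of $F_1\cdots F_s$; this field contains all the branch points and their reciprocals, i.e., all the roots of $m_E\cdot m_E^*$. For all but finitely many primes $\mathcal{P}$ of $\mathcal{O}_k$, $\mathcal{P}$ is a prime divisor of $m_E(T)\cdot m_E^*(T)$ if and only if a Frobenius element at $\mathcal{P}$ in $\text{Gal}(L/k)$ fixes some root under its natural action on the root set. Thus (BPC-3) reduces, via Chebotarev, to exhibiting a derangement $\sigma\in\text{Gal}(L/k)$ for this action. Since $\sigma(\alpha)\neq\alpha$ implies $\sigma(1/\alpha)=1/\sigma(\alpha)\neq 1/\alpha$, it actually suffices to find $\sigma$ acting as a derangement on each branch point orbit $O_1,\dots,O_s$ simultaneously.

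Case (2) is essentially immediate: each of $F_1,F_2$ is a Galois quadratic extension of $k$. If $F_1=F_2$, the non-trivial element of $\text{Gal}(F_1/k)$ swaps both pairs of branch points. If $F_1\neq F_2$, then $\text{Gal}(F_1F_2/k)\cong(\mathbb{Z}/2\mathbb{Z})^2$, and the element restricting non-trivially to both factors swaps both pairs simultaneously.

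For case (1), I would proceed by induction on $s$, relying on Jordan's theorem that every transitive permutation group on a set of size $\geq 2$ contains a derangement. The base $s=1$ follows directly from Jordan's theorem applied to the transitive action of $\text{Gal}(L/k)$ on $O_1$ (of size $\geq 2$). For the inductive step, the linear disjointness of $F_s$ from $F_1\cdots F_{s-1}$ ensures that $\text{Gal}(\overline{k}/F_1\cdots F_{s-1})$ still acts transitively on $O_s$, since the orbit of a primitive element $\alpha\in O_s$ has size $[F_1\cdots F_{s-1}(\alpha):F_1\cdots F_{s-1}]=[F_s:k]=|O_s|$. One then combines an inductively built derangement $\sigma'$ on $O_1,\dots,O_{s-1}$ with an appropriate $\tau\in\text{Gal}(\overline{k}/F_1\cdots F_{s-1})$: the product $\sigma'\tau$ preserves the derangement property on the earlier orbits (since $\tau$ fixes them pointwise) and one seeks $\tau$ so as to produce a derangement on $O_s$. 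The main obstacle is precisely this last step: arranging $\tau$ so that $\sigma'\tau$ acts as a derangement on $O_s$ amounts to finding a derangement in a coset of a transitive subgroup of $\text{Sym}(O_s)$, which is strictly more delicate than Jordan's theorem (cosets of transitive subgroups can lack derangements in general) and requires a further exploitation of linear disjointness to control how $\sigma'$ restricts to $O_s$.
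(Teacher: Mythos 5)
Your overall strategy (Branch Point Criterion with $E_1$ supplied by condition (H3/$k$), then Chebotarev plus a simultaneous derangement on the branch-point orbits) is exactly the paper's, your treatment of case (2) is correct, and your remark about reciprocals handles $m_E^*$ adequately since under either hypothesis no branch point is $k$-rational (so $0,\infty$ are excluded). The problem is the inductive step in case (1), which you yourself flag as unresolved: you need $\sigma'\tau$ to act as a derangement on $O_s$, and you describe this as finding a derangement in a coset of a transitive subgroup of ${\rm Sym}(O_s)$ --- something that indeed can fail in general. As written, case (1) is therefore incomplete.

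The gap closes immediately once you notice that each $F_i$ is \emph{Galois} over $k$: $O_i$ is a full ${\rm G}_k$-orbit, so $F_i$ is the splitting field over $k$ of the irreducible polynomial $m_i(T)$ of a point of $O_i$, and the action of ${\rm G}_k$ on $O_i$ factors through ${\rm Gal}(F_i/k)$. Linear disjointness of Galois extensions then gives ${\rm Gal}(F_1\cdots F_s/k)\cong{\rm Gal}(F_1/k)\times\cdots\times{\rm Gal}(F_s/k)$ compatibly with the actions on the $O_i$, so no induction and no coset problem arise: by Jordan's theorem choose for each $i$ an element $g_i\in{\rm Gal}(F_i/k)$ acting without fixed points on $O_i$ (possible since the action is transitive and $|O_i|\geq 2$), and the element corresponding to $(g_1,\dots,g_s)$ is the desired simultaneous derangement; Chebotarev applied to its conjugacy class finishes the argument. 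Equivalently, in your inductive step the coset $\sigma'\cdot{\rm Gal}(\overline{k}/F_1\cdots F_{s-1})$ restricts \emph{onto} all of ${\rm Gal}(F_s/k)$, so $\tau$ can be chosen to make $\sigma'\tau$ act on $O_s$ as any prescribed element. This is precisely how the paper argues (it also reduces the subcase $F_1\neq F_2$ of (2) to (1), and handles $F_1=F_2$ by noting $m_1$ and $m_2$ then have the same prime divisors up to finitely many); with that observation inserted your proof is complete and coincides with the paper's.
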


\begin{remark} \label{cex}
Assume that $G$ satisfies condition {\rm{(H3/$k$)}} and $E/k(T)$ has $r \leq 4$ branch points. From corollary \ref{data}, we obtain that

\vspace{0.75mm}

\noindent
\hspace{0.4cm} {\it{if {\rm{(i)}} no branch point is $k$-rational,

\vspace{0.5mm}

\noindent
then {\rm{(ii)}} $E/k(T)$ satisfies the {\rm{({\rm{non $G$-parametricity}})}} condition}}.

\vspace{1mm}

\noindent
In particular, we reobtain \cite[corollary 4.4]{Leg13a} which is implication (i) $\Rightarrow$ (ii) in the special case $r=4$.

Proposition \ref{ad r=3} shows however that the converse (ii) $\Rightarrow$ (i) does not hold in general if $r=3$: the extension $E/\mathbb{Q}(T)$ there has at least one $\mathbb{Q}$-rational branch point (as noted in the proof) but condition (ii) holds. Proposition \ref{ad r=4} provides a similar counter-example in the case $r=4$.

However, for $r=2$ and number fields $k \subset \mathbb{R}$, this converse (ii) $\Rightarrow$ (i) is true. Indeed fix such a number field $k$ and assume that $E/k(T)$ has two branch points with at least one $k$-rational. Then the other one is also $k$-rational. From \cite[theorem 1.1]{DF94}, ${\rm{Gal}}(E/k(T))$ is generated by involutions and, since it is cyclic, one has ${\rm{Gal}}(E/k(T))=\mathbb{Z}/2\mathbb{Z}$. Next repeat the same argument as in the proof of (3) $\Rightarrow$ (1) in proposition \ref{ad r=2} to conclude that $E/k(T)$ is parametric over $k$.
\end{remark}

\subsubsection{Proof of corollary \ref{data}}
We show below that, under either one of conditions (1) and (2), there are infinitely many distinct primes of the integral closure $A$ of $\mathbb{Z}$ in $k$ which each is not a prime divisor of $m_E(T) \cdot m_E^*(T)$. Given a subgroup $H \subset G$ satisfying condition (H3/$k$) and a $k$-regular Galois extension $E_H/k(T)$ of group $H$ with at least one $k$-rational branch point, the conclusion then follows from the Branch Point Criterion applied to the extensions $E_H/k(T)$ and $E/k(T)$.

For each index $i \in \{1,\dots,s\}$, pick $t_i \in O_i$ and let $m_i(T)$ be the irreducible polynomial of $t_i$ over $k$ and $d_i$ the degree of $m_i(T)$. Denote the action of ${\rm{Gal}}(F_i/k)$ on the roots of $m_i(T)$ by $\sigma_i \, : \, {\rm{Gal}}(F_i/k) \rightarrow S_{d_i}$ ($i=1,\dots,s$) and the splitting field of $\prod_{i=1}^s m_i(T)$ over $k$ by $F$.

First assume that condition (1) holds. From the second part of our hypothesis, ${\rm{Gal}}(F/k)$ is isomorphic to ${\rm{Gal}}(F_1/k) \times \dots \times {\rm{Gal}}(F_s/k)$ and $\sigma_1 \times \dots \times \sigma_s : {\rm{Gal}}(F_1/k) \times \dots \times {\rm{Gal}}(F_s/k) \rightarrow S_{d_1+\dots+d_s}$ corresponds to its action on the roots of $\prod_{i=1}^s m_i(T)$. Given an index $i \in \{1,\dots,s\}$, the assumption $|O_i| \geq 2$ and an easy group theoretical lemma\footnote{Namely, given a finite group $\Gamma$ acting transitively on a finite set $S$ with at least two elements, there exists some $\gamma \in \Gamma$ such that $\gamma \, . \,  s =s$ for no $s \in S$. This can be easily obtained from the Burnside lemma \cite{Bur55}.} show that there is some $g_i \in {\rm{Gal}}(F_i/k)$ such that $\sigma_i(g_i)$ has no fixed points.

From the Tchebotarev density theorem, there exist infinitely many distinct primes of $A$ such that the associated Frobenius is conjugate in ${\rm{Gal}}(F/k)$ to $(g_1,\dots,g_s)$. In particular, there are infinitely many distinct primes of $A$ which are not prime divisors of $\prod_{i=1}^s m_i(T)$, and so not of $m_E(T)$ either. Since $\infty$ is not a branch point of $E/k(T)$, the same conclusion holds for $m_E(T)  \cdot  m_E^*(T)$ \cite[remark 3.11]{Leg13a}.

Now assume that condition (2) holds. From the last two paragraphs, we may assume that $F_1=F_2$. Then $m_{1}(T)$ and $m_{2}(T)$ have the same prime divisors up to finitely many. As $m_{1}(T)$ is irreducible over $k$ and has degree $\geq 2$, there exist infinitely many distinct primes of $A$ which each is not a prime divisor of $m_{1}(T)$ ({\it{e.g.}} \cite[theorem 9]{Hei67}), and so not of $m_{1}(T)  \cdot  m_{2}(T)$ either, thus ending the proof.

\begin{remark}
(1) As pointed out by the referee, the proof shows that the conclusion of corollary \ref{data} still holds under the sole assumption that there exists some $g \in {\rm{Gal}}(F/k)$ fixing no element of $O_1 \cup \dots \cup O_s$.

\vspace{1mm}

\noindent
(2) If $s=2$ and $|O_1| \geq 3$ or $|O_2| \geq 3$, the proof does not work in general. Indeed $P(T)=(T^3-2)(T^2+T+1)$ has zero mod $p$ for all primes $p$ \cite[$\S$7]{Nag69}.
\end{remark}

\subsection{Examples}
As already said in the presentation, $k$-regular Galois extensions of $k(T)$ with given Galois group $G$ (and {\it{a fortiori}} satisfying the assumptions of corollary \ref{data}) are not always known yet. Of course such extensions always exist in the case $G=\mathbb{Z}/2\mathbb{Z}$ (and then $H=\mathbb{Z}/2\mathbb{Z}$ too). We focus in $\S$6.2.1 on this particular situation. We next give in $\S$6.2.2 another examples with $H=G=\mathbb{Z}/2\mathbb{Z}$ and conclude in $\S$6.2.3 by some examples with larger cyclic groups.

\subsubsection{Application of corollary \ref{data}} Let $k$ be a number field and $P(T) \in k[T]$ a separable polynomial over $k$ of even degree. 

Lemma \ref{grolem} shows that the branch points of the $k$-regular quadratic extension $k(T)(\sqrt{P(T)})/k(T)$ are the roots of $P(T)$. Hence the orbits $O_1,\dots,O_s$ of corollary \ref{data} exactly correspond to the root sets of the irreducible factors $P_1(T), \dots,P_s(T)$ over $k$ of $P(T)$. Thus part (1) of corollary \ref{data} yields corollary \ref{Z/2Z} below (using instead part (2) of corollary \ref{data} leads to a weaker form of \cite[corollary 4.5]{Leg13a}):

\begin{corollary} \label{Z/2Z}
Denote the splitting fields over $k$ of the polynomials $P_1(T), \dots, P_s(T)$ by $F_1,\dots, F_s$ respectively. Assume that ${\rm{deg}}(P_i(T)) \geq 2$ for each index $i \in \{1,\dots,s\}$ and the fields $F_1, \dots, F_s$ are linearly disjoint over $k$. Then the extension $k(T)(\sqrt{P(T)})/k(T)$ satisfies the {\rm{(non $\mathbb{Z}/2\mathbb{Z}$-parametricity)}} condition.
\end{corollary}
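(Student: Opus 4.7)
The plan is to apply part (1) of Corollary \ref{data} directly to the $k$-regular quadratic extension $E = k(T)(\sqrt{P(T)})/k(T)$. So I need to translate each hypothesis of the statement into the language of that corollary and verify condition (H3/$k$) for $H = \mathbb{Z}/2\mathbb{Z}$.

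First I would identify the branch points and their Galois orbits. Since $P(T)$ has even degree, Lemma \ref{grolem} gives that $\infty$ is not a branch point and that the branch point set of $E/k(T)$ is exactly the root set of $P(T)$. The action of $\mathrm{G}_k$ on this set decomposes it into orbits corresponding bijectively to the irreducible factors $P_1(T),\dots,P_s(T)$ over $k$: the orbit $O_i$ is the root set of $P_i(T)$, and the field $F_i$ generated over $k$ by the elements of $O_i$ is exactly the splitting field of $P_i(T)$ over $k$ (matching the definition of $F_i$ in the corollary). In particular $|O_i| = \deg(P_i(T)) \geq 2$ by hypothesis, and the linear disjointness of $F_1,\dots,F_s$ over $k$ is also a direct hypothesis. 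Thus condition (1) of Corollary \ref{data} holds for the extension $E/k(T)$.

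Next I need to check that $H = \mathbb{Z}/2\mathbb{Z}$ satisfies condition (H3/$k$). This is immediate over any number field $k$: the extension $k(T)(\sqrt{T})/k(T)$ is a $k$-regular Galois extension of group $\mathbb{Z}/2\mathbb{Z}$ whose branch points $0$ and $\infty$ are both $k$-rational (by Lemma \ref{grolem}). Since the Galois group of $E/k(T)$ is $\mathbb{Z}/2\mathbb{Z}$, the subgroup $H$ under consideration is the whole group.

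With these verifications in hand, Corollary \ref{data} applied to $G = H = \mathbb{Z}/2\mathbb{Z}$ and to $E = k(T)(\sqrt{P(T)})$ gives the (non $\mathbb{Z}/2\mathbb{Z}$-parametricity) condition, which is exactly the conclusion sought. The proof is entirely an unpacking of definitions together with Lemma \ref{grolem}, so I expect no real obstacle; the only point that warrants care is matching the definition of $F_i$ in Corollary \ref{data} (the field generated over $k$ by a full Galois orbit of branch points) with the splitting field of the corresponding irreducible factor $P_i(T)$, which however is a tautology once one knows that the branch points are precisely the roots of $P(T)$.
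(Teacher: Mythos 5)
Your proof is correct and follows exactly the paper's own route: Lemma \ref{grolem} identifies the branch points with the roots of $P(T)$, the Galois orbits $O_i$ with the root sets of the irreducible factors $P_i(T)$ (so $F_i$ is the splitting field of $P_i(T)$ and $|O_i|=\deg P_i\geq 2$), and part (1) of Corollary \ref{data} then applies, with (H3/$k$) for $\mathbb{Z}/2\mathbb{Z}$ witnessed by $k(\sqrt{T})/k(T)$. No gaps.
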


In particular, we reobtain implication (2) $\Rightarrow$ (3) in proposition \ref{ad r=2}.

\subsubsection{Cyclotomic polynomials} Fix a positive integer $s$ and an $s$-tuple $(n_1,\dots,n_s)$ of distinct integers $\geq 3$. For each index $i \in \{1,\dots,s\}$, denote the $n_i$-th cyclotomic polynomial by $\phi_{n_i}(T)$.

\begin{corollary} \label{Z/2Z 2}
The extension $\mathbb{Q}(T)(\sqrt{\phi_{n_1}(T) \dots \phi_{n_s}(T)})/\mathbb{Q}(T)$ satisfies the {\rm{(non $\mathbb{Z}/2\mathbb{Z}$-parametricity)}} condition.
\end{corollary}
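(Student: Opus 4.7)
The plan is to apply the enhanced form of Corollary \ref{data} recorded in Remark \ref{cex}(1), rather than Corollary \ref{Z/2Z} directly. The reason is that the splitting fields $\mathbb{Q}(\zeta_{n_i})$ of the irreducible factors $\phi_{n_i}(T)$ are in general \emph{not} linearly disjoint over $\mathbb{Q}$ --- for example $\mathbb{Q}(\zeta_3)=\mathbb{Q}(\zeta_6)$ --- so the linear disjointness hypothesis of Corollary \ref{Z/2Z} can fail (indeed taking $n_1=3$, $n_2=6$ already collapses the compositum). Remark \ref{cex}(1), on the other hand, only requires a single Galois element of the compositum fixing no branch point, which will be cheap to produce.

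First I would record the setup. Since $\varphi(n_i)\geq 2$ for $n_i\geq 3$, the polynomial $P(T)=\phi_{n_1}(T)\cdots\phi_{n_s}(T)$ has even degree, and it is separable over $\mathbb{Q}$ since the $n_i$ are distinct. Lemma \ref{grolem} then identifies the branch point set of $\mathbb{Q}(T)(\sqrt{P(T)})/\mathbb{Q}(T)$ with the union of the sets of primitive $n_i$-th roots of unity ($i=1,\dots,s$). By irreducibility of the cyclotomic polynomials these split into exactly $s$ Galois orbits $O_1,\dots,O_s$ under ${\rm G}_\mathbb{Q}$, and the compositum $F$ of the $\mathbb{Q}(\zeta_{n_i})$'s is $\mathbb{Q}(\zeta_N)$ with $N=\mathrm{lcm}(n_1,\dots,n_s)$ and $\Gal(F/\mathbb{Q})\cong(\mathbb{Z}/N\mathbb{Z})^\times$.

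The key (and only nontrivial) step is to exhibit $g\in\Gal(F/\mathbb{Q})$ fixing no element of $O_1\cup\cdots\cup O_s$. I would take $g$ to be complex conjugation, i.e.\ the class of $-1$ in $(\mathbb{Z}/N\mathbb{Z})^\times$: an element of $\Gal(\mathbb{Q}(\zeta_n)/\mathbb{Q})$ corresponding to $m\in(\mathbb{Z}/n\mathbb{Z})^\times$ fixes a primitive $n$-th root of unity if and only if $m\equiv 1\pmod n$, and for each $i$ the hypothesis $n_i\geq 3$ forces $-1\not\equiv 1\pmod{n_i}$. Remark \ref{cex}(1) then immediately delivers the (non $\mathbb{Z}/2\mathbb{Z}$-parametricity) condition.

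Thus the only real subtlety in the proof is noticing the failure of linear disjointness among the $\mathbb{Q}(\zeta_{n_i})$ and choosing to use Remark \ref{cex}(1) instead of Corollary \ref{Z/2Z}; once that is done, the argument is entirely uniform in $s$ and in the tuple $(n_1,\dots,n_s)$, and a single involution of $\Gal(F/\mathbb{Q})$ takes care of every orbit simultaneously.
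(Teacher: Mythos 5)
Your argument is correct, but it takes a different route from the paper's, so let me compare. (One small citation slip first: the statement you want is not Remark \ref{cex} --- that one concerns the case $r\le 4$ --- but the unlabelled remark immediately after the proof of Corollary \ref{data}, the one attributed to the referee; the content you quote is the right one.) The paper does not go through Corollary \ref{data} at all here: it computes $m_E(T)=\phi_{n_1}(T)^{\varphi(n_1)}\cdots\phi_{n_s}(T)^{\varphi(n_s)}$, invokes the classical fact that the prime divisors of $\phi_{n_i}(T)$ are (up to finitely many) exactly the primes $p\equiv 1 \bmod n_i$, and then uses Dirichlet to produce infinitely many primes congruent to $1$ modulo no $n_i$; these are non-divisors of $m_E(T)\cdot m_E^*(T)$, and the Branch Point Criterion applied to $\mathbb{Q}(\sqrt{T})/\mathbb{Q}(T)$ and $E/\mathbb{Q}(T)$ concludes. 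Your proof instead feeds complex conjugation into the referee's strengthening of Corollary \ref{data}, whose proof runs Tchebotarev on the compositum $F=\mathbb{Q}(\zeta_N)$; concretely this produces the primes $p\equiv -1 \bmod N$, a subfamily of the primes the paper exhibits. So the two proofs share the same skeleton (exhibit infinitely many primes that are not prime divisors of $m_E\cdot m_E^*$, then apply the Branch Point Criterion with $\mathbb{Q}(\sqrt{T})$ as the auxiliary extension), but yours packages the number theory into the general criterion via a single explicit Galois element, whereas the paper's is more hands-on and makes the relevant congruence conditions explicit. Your opening observation --- that Corollary \ref{Z/2Z} is not directly applicable because the fields $\mathbb{Q}(\zeta_{n_i})$ need not be linearly disjoint (e.g.\ $\mathbb{Q}(\zeta_3)=\mathbb{Q}(\zeta_6)$) --- is exactly the reason the paper supplies a separate proof for this corollary, and is worth stating. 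Both arguments are uniform in $s$ and $(n_1,\dots,n_s)$; the paper's has the mild advantage of identifying precisely which quadratic extensions $\mathbb{Q}(\sqrt{\pm p})$ are being obstructed, while yours is shorter once the referee's remark is granted.
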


\begin{proof}
Set $E= \mathbb{Q}(T)(\sqrt{\phi_{n_1}(T) \dots \phi_{n_s}(T)})$. We show below that there are infinitely many distinct primes which each is not a prime divisor of $m_E(T)  \cdot  m_E^*(T)$. The conclusion follows from the Branch Point Criterion applied to the extensions $\mathbb{Q}(\sqrt{T})/\mathbb{Q}(T)$ (for example) and $E/\mathbb{Q}(T)$.

Since $\phi_{n_1}(T) \dots \phi_{n_s}(T)$ has even degree, $\infty$ is not a branch point of $E/\mathbb{Q}(T)$ (lemma \ref{grolem}). Hence, from \cite[remark 3.11]{Leg13a}, the two polynomials $m_E(T)  \cdot  m_E^*(T)$ and $m_E(T)$ have the same prime divisors (up to finitely many). Moreover the branch points of $E/\mathbb{Q}(T)$ are the roots of $\phi_{n_1}(T) \dots \phi_{n_s}(T)$ and, with $\varphi$ the Euler function, one then has $m_E(T) = \phi_{n_1}(T)^{\varphi(n_1)} \dots \phi_{n_s}(T)^{\varphi(n_s)}$. Since, for each index $i \in \{1,\dots,s\}$, the prime divisors of $\phi_{n_i}(T)$ are known to be exactly all primes $p$ such that $p \equiv 1 \, \, {\rm{mod}} \, \, [n_i]$ (up to finitely many), any prime divisor $p$ of $m_E(T)  \cdot  m_E^*(T)$ satisfies $p \equiv 1 \, \, {\rm{mod}} \, \, n_{i_p}$ for some index $i_p \in \{1,\dots,s\}$ (up to finitely many). From the Dirichlet theorem, there exist infinitely many distinct primes $p$ which each satisfies $p \equiv 1 \, \, {\rm{mod}} \, \, n_{i}$ for no index $i \in \{1,\dots,s\}$, thus ending the proof.
\end{proof}

\subsubsection{Larger cyclic groups}

Let $n$ be a positive integer $\geq 3$. As shown in \cite{Des95}, there exists at least one $\mathbb{Q}$-regular Galois extension of $\mathbb{Q}(T)$ of group $\mathbb{Z}/n\mathbb{Z}$ and branch point set $\{e^{2ik\pi/n} \, / \, (k,n)=1\}$. Let $E_n/\mathbb{Q}(T)$ be such an extension.

\begin{corollary} \label{cyclic 2}
The extension $E_n/\mathbb{Q}(T)$ satisfies the {\rm{(non $\mathbb{Z}/m\mathbb{Z}$-para-metricity)}} condition for any positive divisor $m$ of $n$ satisfying either one of the following two conditions:

\vspace{0.5mm}

\noindent
{\rm{(1)}} $m$ is even,

\vspace{0.5mm}

\noindent
{\rm{(2)}} $m \not \in \{1,n\}$ and, if $n \equiv 2 \, \, {\rm{mod}} \, \, 4$, $m \not=n/2$.
\end{corollary}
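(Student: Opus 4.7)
My plan is to apply theorem \ref{methode} with $E_2 = E_n$ and an appropriately chosen $\mathbb{Q}$-regular $\mathbb{Z}/m\mathbb{Z}$-extension $E_1/\mathbb{Q}(T)$, using condition (1) of addendum 4.2 (available since $\mathbb{Q}$ is hilbertian) to upgrade the conclusion to the (non $\mathbb{Z}/m\mathbb{Z}$-parametricity) condition. The first step is to identify the prime divisors of $m_{E_n}(T) \cdot m_{E_n}^*(T)$: because the branch points of $E_n$ are the primitive $n$-th roots of unity, all with minimal polynomial $\phi_n(T)$ over $\mathbb{Q}$, and this set is closed under $\zeta \mapsto 1/\zeta$, these divisors coincide, up to finitely many exceptions, with the prime divisors of $\phi_n(T)$, namely the primes $p \equiv 1 \pmod n$.

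For condition (2) with $m \geq 3$ I would take $E_1 = E_m$ (available by Desbrières's theorem). The same cyclotomic computation yields that the prime divisors of $m_{E_m}(T) \cdot m_{E_m}^*(T)$ are, up to finitely many, the primes $p \equiv 1 \pmod m$, so the Branch Point Hypothesis reduces to producing infinitely many primes $p$ satisfying $p \equiv 1 \pmod m$ and $p \not\equiv 1 \pmod n$. Via the surjection $(\mathbb{Z}/n\mathbb{Z})^\times \twoheadrightarrow (\mathbb{Z}/m\mathbb{Z})^\times$ (well-defined since $m \mid n$), whose kernel has order $\varphi(n)/\varphi(m)$, this is equivalent to the strict inequality $\varphi(n) > \varphi(m)$. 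A short case analysis shows that for $m \mid n$ the equality $\varphi(n) = \varphi(m)$ holds exactly when $m = n$ or ($n = 2m$ with $m$ odd, i.e. $n \equiv 2 \pmod 4$ and $m = n/2$), both excluded in (2); Dirichlet's theorem then supplies the required primes. For $m = 2$, which also formally falls under (2) but where Desbrières's extension is not defined, the same argument works with $E_1 = \mathbb{Q}(T)(\sqrt{T})$: its $\mathbb{Q}$-rational branch points $\{0,\infty\}$ force all but finitely many primes to divide $m_{E_1} \cdot m_{E_1}^*$, and infinitely many primes lie outside the class $p \equiv 1 \pmod n$ because $n \geq 3$.

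The only situation not yet handled is condition (1) with $m = n$ even (the one instance of (1) not also covered by (2)). Here I would invoke condition (H3/$\mathbb{Q}$) for $\mathbb{Z}/n\mathbb{Z}$ -- valid for even $n$ as discussed in \cite[\S4.3.1]{Leg13a} -- to produce a $\mathbb{Q}$-regular $\mathbb{Z}/n\mathbb{Z}$-extension of $\mathbb{Q}(T)$ with a $\mathbb{Q}$-rational branch point, and then apply the Branch Point Criterion as in the $m = 2$ subcase. The main obstacle of the proof is really the arithmetic step in case (2): pinpointing exactly when $\varphi(n) = \varphi(m)$ for $m \mid n$ is what forces the corollary's slightly unusual proviso on $n \equiv 2 \pmod 4$, which traces back to the classical coincidence $\varphi(2k) = \varphi(k)$ for odd $k$.
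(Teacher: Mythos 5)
Your proof is correct and follows essentially the same route as the paper: case (2) is handled exactly as in the text (applying the Branch Point Hypothesis of theorem \ref{methode} to $E_m/\mathbb{Q}(T)$ and $E_n/\mathbb{Q}(T)$ via the cyclotomic description of the prime divisors and Dirichlet's theorem, with condition (1) of addendum 4.2 supplying the group-$\mathbb{Z}/m\mathbb{Z}$ refinement), and your explicit $\varphi(n)>\varphi(m)$ analysis just makes precise the arithmetic the paper leaves implicit. The only cosmetic difference is in case (1): the paper invokes corollary \ref{data}(1) together with (H3/$\mathbb{Q}$) for even $m$ directly, whereas you reduce to the single residual subcase $m=n$ and apply the Branch Point Criterion by hand -- but corollary \ref{data} is itself a packaging of that criterion, so the mechanism is identical.
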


\begin{proof}
First assume that condition (1) holds. Then the group $\mathbb{Z}/m\mathbb{Z}$ satisfies condition (H3/$\mathbb{Q}$) \footnote{introduced at the beginning of \S6.1.} ({\it{e.g.}} \cite[lemma 3.3.5]{Leg13c}). Applying part (1) of corollary \ref{data} provides the conclusion.

Now assume that condition (2) holds. First remark that, since $\infty$ is not a branch point of $E_n/\mathbb{Q}(T)$, the polynomials $m_{E_n}(T)  \cdot  m_{E_n}^*(T)$ and $m_{E_n}(T)$ have the same prime divisors (up to finitely many) \cite[remark 3.11]{Leg13a} and, since $m_{E_n}(T)$ is a power of the $n$-th cyclotomic polynomial, these prime divisors are exactly all primes $p$ such that $p \equiv 1 \, \, {\rm{mod}} \, \, n$ (up to finitely many).

One may assume from above that $m \geq 3$. From the Dirichlet theorem, there exist infinitely many distinct primes $p$ which each satisfies $p \equiv 1 \, \, {\rm{mod}} \, \, m$ and $p \not \equiv 1 \, \, {\rm{mod}} \, \, n$. Hence, with $E_m/\mathbb{Q}(T)$ any $\mathbb{Q}$-regular Galois extension obtained as $E_n/\mathbb{Q}(T)$ but for the integer $m$ ({\it{i.e.}} $E_m/\mathbb{Q}(T)$ has Galois group $\mathbb{Z}/m\mathbb{Z}$ and branch point set $\{e^{2ik\pi/m} \, / \, (k,m)=1\}$), the original Branch Point Hypothesis of theorem \ref{methode} applied to the extensions $E_m/\mathbb{Q}(T)$ and $E_n/\mathbb{Q}(T)$ holds. As condition (1) of addendum 4.2 holds, the conclusion follows.
\end{proof}

\section{Applications of the Inertia Criteria}

For this section, let $A$ be a Dedekind domain of characteristic zero with infinitely many distinct primes and $k$ its quotient field. 

Given a finite group $H$, we use below Inertia Criteria 1-3 to show that some known $k$-regular finite Galois extensions of $k(T)$ of group $G$ containing $H$ satisfy the {\rm{(geometric non $H$-parametricity)}} condition. We first consider the case $H=S_n$ ($\S$7.1) and then the case $H=A_n$ ($\S$7.2). $\S$7.3 is devoted to some other cases $H$ is a non abelian simple group and we conclude in $\S$7.4 with the case $H$ is a $p$-group.

\subsection{The case $H=S_n$} Let $n$ be an integer $\geq 3$. The aim of this subsection is corollary \ref{coro Sn} below which gives our main examples in the situation $H=G=S_n$. The involved regular realizations of $S_n$ are recalled in $\S$7.1.1. Corollary \ref{coro Sn} is stated in $\S$7.1.2 and proved in $\S$7.1.3.

\subsubsection{Some classical regular realizations of symmetric groups} Recall that a permutation $\sigma \in S_n$ is said to have {\it{type $1^{l_1} \dots n^{l_n}$}} if, for each index $i \in \{1,\dots, n\}$, there are $l_i$ disjoint cycles of length $i$ in the cycle decomposition of $\sigma$ (for example, an $n$-cycle is of type $n^1$). Denote the conjugacy class in $S_n$ of elements of type $1^{l_1} \dots n^{l_n}$ by $[1^{l_1} \dots n^{l_n}]$.

\vspace{3mm}

\noindent
(a) {\it{Morse polynomials}}. Recall that a degree $n$ monic polynomial $M(Y)$ $\in k[Y]$ is {\it{a Morse polynomial}} if the zeroes $\beta_1, \dots, \beta_{n-1}$ of the derivative $M'(Y)$ are simple and $M(\beta_i) \not= M(\beta_j)$ for $i \not=j$. For example, $M(Y)=Y^n \pm Y$ is a Morse polynomial.

Given a degree $n$ Morse polynomial $M(Y)$, the polynomial $P(T,Y)=M(Y)-T$ provides a $k$-regular Galois extension $E_{1}/k(T)$ of group $S_n$, branch point set $\{\infty, M(\beta_1),\dots,M(\beta_{n-1})\}$ and inertia canonical invariant $([n^1],[1^{n-2} 2^1],\dots,[1^{n-2} 2^1])$. See \cite[$\S$4.4]{Ser92}.

\vspace{3mm}

\noindent
(b) {\it{Trinomial realizations}}. Let $m$, $q$, $s$ be positive integers satisfying $1 \leq m \leq n$, $(m,n)=1$ and $s(n-m)-qn=1$. The trinomial $Y^n-T^qY^m+T^s$ provides a $k$-regular Galois extension $E_{2}/k(T)$ of group $S_n$, branch point set $\{0, \infty, m^m(n-m)^{n-m}n^{-n}\}$ and inertia canonical invariant $([m^1(n-m)^1], [n^1], [1^{n-2} 2^1])$. We note for later use that the set of these three conjugacy classes of $S_n$ is g-complete \cite[$\S$2.4]{Sch00}.

\vspace{3mm}

\noindent
(c) {\it{A realization with four branch points}}. Assume that $n$ is even and $n \geq 6$. From \cite{HRD03}, there exists some $k$-regular Galois extension $E_{3}/k(T)$ of group $S_n$ with four $\mathbb{Q}$-rational branch points and inertia canonical invariant $([1^2 (n-2)^1], [1^{n-3} 3^1], [2^{(n/2)}], [1^2 2^{(n-2)/2}])$.

\subsubsection{Examples with $G=S_n$} Assume that $n \geq 4$ \footnote{As explained in \cite[remark 2.2.4(b)]{Leg13c}, corollary \ref{coro Sn} does not hold in the case $n=3$.}.

\begin{corollary} \label{coro Sn}
{\rm{(1)}} The three extensions $E_1/k(T)$ (for arbitrary $n \geq 4$), $E_2/k(T)$ (if $n \not \in \{4, 6\}$) and $E_3/k(T)$ (if $n \geq 6$ is even) each satisfies the {\rm{(geometric non $S_n$-parametricity)}} condition.

\vspace{1mm}

\noindent
{\rm{(2)}} Assume that $n=6$ and $k$ is hilbertian. Then the extension $E_2/k(T)$ satisfies the {\rm{(geometric non $S_n$-parametricity)}} condition.
\end{corollary}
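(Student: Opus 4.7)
The plan is to apply the Inertia Criteria of §4.2 with helper extensions drawn from the three families $E_1, E_2, E_3$ of §7.1.1. In each case the helper will have $\mathbb{Q}$-rational branch points (hence $k$-rational), so condition (IC1-1) is automatic, and the main work is to verify g-completeness of the helper's inertia class set together with the existence of one helper inertia class whose $S_n$-conjugacy class is not a power of any inertia class of the target extension (condition (IC1-2)).

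To handle $E_1/k(T)$ for $n \geq 4$, I take the trinomial $E_2/k(T)$ with $m = 1$ as helper. Its inertia class set is g-complete by \cite[§2.4]{Sch00}, and the $(n-1)$-cycle class $[1^1(n-1)^1]$ cannot be a power of $[n^1]$ (powers of an $n$-cycle have type $[(n/d)^d]$, whose cycles all have the same length) nor of the transposition class $[1^{n-2}2^1]$; so Inertia Criterion 1 applies. To handle $E_3/k(T)$ for $n \geq 6$ even, with the same helper $E_2/k(T)$ at $m=1$, condition (IC1-2) reduces via remark \ref{forme 3} to the observation that the order $n$ of the class $[n^1]$ divides none of the orders $n-2$, $3$, $2$ of the inertia classes of $E_3/k(T)$, and Inertia Criterion 1 again gives the conclusion.

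For $E_2/k(T)$ in part (1), with $n \geq 5$ and $n \neq 6$, the key observation is that under precisely this restriction the number of unordered pairs $\{m,n-m\}$ with $(m,n)=1$ is at least two. I pick a second valid trinomial parameter $m'$ giving a distinct type $[m'^1(n-m')^1]$ and take the corresponding trinomial extension as helper. A cycle-type analysis then shows $[m'^1(n-m')^1]$ is not a power of any class in $\{[m^1(n-m)^1], [n^1], [1^{n-2}2^1]\}$: powers of $[m^1(n-m)^1]$ coprime to both $m$ and $n-m$ preserve the type, powers divisible by $m$ or $n-m$ introduce fixed points, while the other two classes are excluded as in the previous paragraph. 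Inertia Criterion 1 then yields the result.

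For part (2), the trinomial parameter is essentially unique at $n=6$ (only $\{1,5\}$ is available), so the previous strategy fails for $E_2/k(T)$ and I instead take $E_3/k(T)$ as helper. Its $3$-cycle class $[1^3 3^1]$ has order $3$, which divides no element of the order set $\{5, 6, 2\}$ associated with the inertia of $E_2/k(T)$, hence is not a power of any inertia class of $E_2/k(T)$; since the corresponding branch point is $\mathbb{Q}$-rational, Inertia Criterion 2 applies under the hilbertianity hypothesis on $k$. The main obstacle throughout is securing g-completeness for the applications of IC1; this is available for the trinomial inertia sets via \cite{Sch00} but, not being at hand for the inertia set of $E_3$ in $S_6$, forces the detour through IC2 and the added hilbertianity assumption in part (2).
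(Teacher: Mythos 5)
Your overall strategy is the paper's: use the trinomial extensions (with their $k$-rational branch points and g-complete inertia sets) as helpers in Inertia Criterion 1 for $E_1$, $E_3$ and for $E_2$ when $\varphi(n)\geq 4$, and fall back on $E_3$ together with Inertia Criterion 2 for the $n=6$ case of $E_2$. Parts of your argument for $E_1$, $E_3$ and for $E_2$ with $n\notin\{4,6\}$ are correct and essentially identical to the paper's (which packages them as a general statement: $E$ is non-parametric if $[n^1]$, or some $[m^1(n-m)^1]$ with $(m,n)=1$, or --- under hilbertianity --- $[1^2(n-2)^1]$ is missing from its inertia invariant). One cosmetic remark there: your trichotomy for powers of $[m^1(n-m)^1]$ ("coprime to both'' versus "divisible by $m$ or $n-m$'') omits exponents $a$ with $1<\gcd(a,m)<m$; these do not introduce fixed points but do increase the number of cycles beyond two, which is what actually rules them out. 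The conclusion is unaffected.

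There is, however, a genuine error in your part (2). You claim that the order $3$ of the class $[1^3 3^1]$ of $E_3'$ (at $n=6$) divides none of the orders $5,6,2$ of the inertia classes of $E_2$; but $3$ divides $6$, so the divisibility shortcut of remark \ref{forme 3} simply does not apply to this class, and the step as written fails. The conclusion you want is still true, but it requires the finer class-level computation: the powers of a $6$-cycle have types $[6^1]$, $[3^2]$, $[2^3]$, $[1^6]$, none of which is $[1^3 3^1]$, and the powers of $[1^1 5^1]$ and $[1^4 2^1]$ are likewise excluded, so $[1^3 3^1]$ is indeed not in $\{C_1^a,C_2^a,C_3^a\,/\,a\in\mathbb{N}\}$. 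Alternatively --- and this is what the paper does --- use the class $[1^2 4^1]$ of $E_3$ instead: its order $4$ genuinely divides none of $5,6,2$, so remark \ref{forme 3} applies directly, and its branch point is $\mathbb{Q}$-rational, so Inertia Criterion 2 goes through under the hilbertianity hypothesis exactly as you intend.
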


\begin{remark} \label{non gen}
Fix a PAC field $\kappa$ of characteristic zero and a $\kappa$-regular Galois extension $E/\kappa(T)$ of group $S_n$ (with $n \geq 4$) provided by some degree $n$ Morse polynomial with coefficients in $\kappa$. As noted in $\S$2.3.1, the extension $E/\kappa(T)$ is $S_n$-parametric over $\kappa$. But, with $X$ an indeterminate, the extension $E(X)/\kappa(X)(T)$ is not (corollary \ref{coro Sn}). Hence $E/\kappa(T)$ is not $S_n$-generic over $\kappa$. To our knowledge, no such example was known before.
\end{remark}

\subsubsection{Proof of corollary \ref{coro Sn}} The proof has two main parts. The first one consists in showing the following general result:

\vspace{2mm}

\noindent
{\it{Let $E/k(T)$ be a $k$-regular Galois extension of group $S_n$ and inertia canonical invariant $(C_1,\dots,C_r)$. Denote the set of all integers $m$ such that $1 \leq m \leq n$ and $(m,n)=1$ by $I_n$. Then the extension $E/k(T)$ satisfies the {\rm{(geometric non $S_n$-parametricity)}} condition provided that one of the following three conditions holds:

\vspace{0.5mm}

\noindent
{\rm{(1)}} $[n^1]$ is not in $\{C_1,\dots, C_r\}$,

\vspace{0.5mm}

\noindent
{\rm{(2)}} $[m^1(n-m)^1]$ is not in $\{C_1,\dots,C_r\}$ for some $m \in I_n$,

\vspace{0.5mm}

\noindent
{\rm{(3)}} $k$ is hilbertian, $n \geq 6$ is even and $[1^2(n-2)^1]$ is not in $\{C_1,\dots, C_r\}$.

\vspace{0.5mm}

\noindent
In particular, the extension $E/k(T)$ satisfies the {\rm{(geometric non $S_n$-parametricity)}} condition if $r \leq \varphi(n)/2$ \footnote{Here and in \S7.2.3, $\varphi$ denotes the Euler function.}.}}

\vspace{2mm}

This statement is a generalization of \cite[corollary 4.7]{Leg13a} and provides theorem 3 from the presentation in the case $G=S_n$ (as $\varphi$ satisfies the classical inequality $\varphi(n) \geq \sqrt{n/2}$ ($n \geq 2$)). 

The second part of the proof consists in checking that each extension $E_i/k(T)$ ($i=1,2,3$) satisfies one of the conditions above.

\vspace{2mm}

\noindent
{\it{Part 1}}. The proof consists in each case in applying Inertia Criterion 1 (if there are no assumption on the base field $k$) or Inertia Criterion 2 (if $k$ is assumed to be hilbertian) to some suitable extension $E_j/k(T)$ ($\S$7.1.1) and the given one $E/k(T)$.

First assume that $[n^1]$ is not in $\{C_1,\dots,C_r\}$. Then $[n^1]$ is not in $\{C_1^a,\dots,C_r^a \, / \, a \in \mathbb{N}\}$ either, {\it{i.e.}} condition (IC1-2) of Inertia Criterion 1 applied to the extensions $E_{2}/k(T)$ and $E/k(T)$ holds. As conditions (IC1-1) and (IC1-3) hold (as noted in $\S$7.1.1(b)), the conclusion follows.

If $[m^1(n-m)^1]$ is not in $\{C_1,\dots,C_r\}$ for some integer $m \in I_n$, repeat the same argument with $[n^1]$ replaced by $[m^1(n-m)^1]$.

Now assume that $k$ is hilbertian, $n \geq 6$ is even and $[1^2(n-2)^1]$ is not in $\{C_1,\dots,C_r\}$. Then $[1^2(n-2)^1]$ is not in $\{C_1^a,\dots,C_r^a \, / \, a \in \mathbb{N}\}$ either, {\it{i.e.}} condition (IC2-1) of Inertia Criterion 2 applied to $E_{3}/k(T)$ and $E/k(T)$ holds. As condition (IC2-2) holds, the conclusion follows.

\vspace{2mm}

\noindent
{\it{Part 2}}. Let $i \in \{1,2,3\}$.

\vspace{0.5mm}

\noindent
(a) If $i=1$, condition (2) holds (with $m=1$). 

\vspace{0.5mm}

\noindent
(b) Assume that $i=2$. If $n \not \in \{4,6\}$, one has $\varphi(n) \geq 4$ and condition (2) holds. In the case $n=6$, condition (3) holds.

\vspace{0.5mm}

\noindent
(c) If $i=3$, condition (1) holds.

\subsection{The case $H=A_n$} Let $n$ be an integer $\geq 4$. The aim of this subsection is corollary \ref{coro An} below which gives our main examples in the situation $H=G=A_n$. The involved regular realizations of $A_n$ are recalled in $\S$7.2.1. Corollary \ref{coro An} is stated in $\S$7.2.2 and proved in $\S$7.2.3.

\subsubsection{Some classical regular realizations of alternating groups} Recall that, if the conjugacy class $[1^{l_1} \dots n^{l_n}]$ in $S_n$ of permutations of type $1^{l_1} \dots n^{l_n}$ is contained in $A_n$, then $[1^{l_1} \dots n^{l_n}]$ is a conjugacy class of $A_n$ if and only if there exists some index $p \in \{1,\dots,n\}$ such that $l_p \geq 2$ or $l_{2p} \geq 1$. Otherwise $[1^{l_1} \dots n^{l_n}]$ splits into two distinct conjugacy classes of $A_n$ which we denote below by $[1^{l_1} \dots n^{l_n}]_1$ and $[1^{l_1} \dots n^{l_n}]_2$.

\vspace{2mm}

\noindent
(a) {\it{Mestre's realizations}}. Assume that $n$ is odd. In \cite{Mes90}, Mestre produces some $k$-regular Galois extensions $E'_{1}/k(T)$ of group $A_n$ with $n-1$ branch points and inertia canonical invariant $([1^{n-3} 3^1],\dots, [1^{n-3} 3^1])$.

\vspace{2mm}

\noindent
(b) {\it{From the trinomial realizations}}. Fix a positive integer $m \leq n$ such that $(m,n)=1$. Apply the ``double group trick" \cite[lemma 4.5.1]{Ser92} to the extension $E_2/k(T)$ from $\S$7.1.1(b) to obtain a three branch point $k$-regular Galois extension $E'_2/k(T)$ of group $A_n$ and, from the {\it{branch cycle lemma}} \cite{Fri77} \cite[lemma 2.8]{Vol96}, with inertia canonical invariant

\noindent
- $([m^1(n-m)^1]_1, [m^1(n-m)^1]_2, [(n/2)^2])$ if $n$ is even,

\noindent
- $([n^1]_1, [n^1]_2,[m^1((n-m)/2)^2])$ if $n$ and $m$ are odd,

\noindent
- $([n^1]_1, [n^1]_2, [(m/2)^2(n-m)^1])$ if $n$ is odd and $m$ is even.

\noindent
Note that the branch cycle lemma shows that the branch point corresponding to the following conjugacy class (in each case) is $\mathbb{Q}$-rational:

\noindent 
- $[(n/2)^2]$ if $n$ is even,

\noindent
- $[m^1((n-m)/2)^2]$ if $n$ is odd and $m$ is odd,

\noindent
- $[(m/2)^2(n-m)^1]$ if $n$ is odd and $m$ is even.

\vspace{2mm}

\noindent
(c) {\it{From the realization with four branch points}}. Assume that $n$ is even and $n \geq 6$. As explained in \cite[$\S$3.3]{HRD03}, the extension $E_{3}/k(T)$ from $\S$7.1.1(c) induces a $k$-regular Galois extension $E'_{3}/k(T)$ of group $A_n$ with five branch points and inertia canonical invariant 

\noindent
- $([1^2((n-2)/2)^2], [1^{n-3}3^1], [1^{n-3}3^1], [1^22^{n/2}], [1^22^{n/2}])$ if $n/2$ is even,

\noindent
- $([1^2((n-2)/2)^2], [1^{n-3}3^1], [1^{n-3}3^1], [1^22^{(n-2)/2}], [1^22^{(n-2)/2}])$ otherwise.

\noindent
Note that the branch point of $E'_{3}/k(T)$ associated with $[1^2((n-2)/2)^2]$ (in each case) is $\mathbb{Q}$-rational from the branch cycle lemma (if $n \geq 8$)\footnote{In the case $n=6$, the involved branch point might not be $\mathbb{Q}$-rational as the two conjugacy classes $[1^2((n-2)/2)^2]$ and $[1^22^{(n-2)/2}]$ coincide.}.

\subsubsection{Examples with $G=A_n$} Assume that $n \geq 5$.

\begin{corollary} \label{coro An}
{\rm{(1)}} Assume that $k$ is hilbertian. Then the three extensions $E'_{1}/k(T)$ (if $n$ is odd), $E'_{2}/k(T)$ (for any $n \not= 6$) and $E'_{3}/k(T)$ (if $n$ is even) each satisfies the {\rm{(geometric non $A_n$-parametricity)}} condition.

\vspace{0.7mm}

\noindent
{\rm{(2)}} Assume that $n=6$ and $k$ is either a number field or a finite extension of a rational function field $\kappa(X)$ with $\kappa$ an arbitrary algebraically closed field of characteristic zero (and $X$ an indeterminate). Then $E'_2/k(T)$ satisfies the {\rm{(geometric non $A_n$-parametricity)}} condition.
\end{corollary}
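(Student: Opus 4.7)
The plan is to parallel the proof of Corollary \ref{coro Sn}, proceeding in two stages: first, I would derive general missing-class criteria for $A_n$ from the Inertia Criteria of \S4.2.2, and then verify that each of $E'_1/k(T)$, $E'_2/k(T)$, $E'_3/k(T)$ satisfies one of them.

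For the first stage I would use the two realizations from \S7.2.1 that come equipped with a $\mathbb{Q}$-rational branch point, namely $E'_2/k(T)$ (whose distinguished $\mathbb{Q}$-rational branch point has inertia class $[(n/2)^2]$, $[m^1((n-m)/2)^2]$, or $[(m/2)^2(n-m)^1]$ according to the parities of $n$ and the admissible parameter $m$) and, for $n \geq 8$ even, $E'_3/k(T)$ (whose distinguished $\mathbb{Q}$-rational branch point has inertia class $[1^2((n-2)/2)^2]$). Applying Inertia Criterion 2 with each of these in the role of the source extension $E_1/k(T)$, against an arbitrary $k$-regular Galois extension $E/k(T)$ of group $A_n$ with inertia canonical invariant $(C_1,\dots,C_r)$, yields the following intermediate statement: if $k$ is hilbertian and one of the source classes above fails to lie in the power-closure $\{C_1^a,\dots,C_r^a \, / \, a\in\mathbb{N}\}$, then $E/k(T)$ satisfies the (geometric non $A_n$-parametricity) condition.

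For the second stage, corresponding to part (1), I would verify this intermediate statement for each target. For $E'_1/k(T)$ ($n$ odd), the inertia invariant $([1^{n-3}3^1], \dots, [1^{n-3}3^1])$ has power-closure $\{[1^n], [1^{n-3}3^1]\}$; taking $E'_2/k(T)$ with $m=1$ as source, its $\mathbb{Q}$-rational-branch-point class $[1^1((n-1)/2)^2]$ is neither the identity nor a $3$-cycle for $n \geq 5$. For $E'_2/k(T)$ ($n \neq 6$), I would take $E'_3/k(T)$ as source when $n \geq 8$ is even, and a second instance of $E'_2/k(T)$ with a different admissible parameter $m'$ when $n$ is odd, comparing cycle types in each case. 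For $E'_3/k(T)$ ($n$ even, $n \geq 6$), I would take $E'_2/k(T)$ (for a suitable $m$) as source; for example when $n=6$ the source class $[3^2]$ of $E'_2/k(T)$ (at $m=1$) is not in the power-closure of $E'_3$'s invariant $\{[1^2 2^2], [1^3 3^1]\}$, and a similar check works for $n \geq 8$.

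Part (2), the case $n=6$, is handled separately because the branch point of $E'_3/k(T)$ singled out in \S7.2.1(c) is not guaranteed to be $\mathbb{Q}$-rational in that case. Here I would invoke Inertia Criterion 3, which dispenses with $\mathbb{Q}$-rationality at the cost of restricting $k$ to the classes named in the statement, and take $E'_3/k(T)$ as source against the target $E'_2/k(T)$: the class $[1^3 3^1]$ in the inertia invariant of $E'_3/k(T)$ lies outside the power-closure of the invariant of $E'_2/k(T)$ at $n=6$ (whose non-trivial powers yield only $5$-cycles and the double-$3$-cycle class $[3^2]$). The main obstacle throughout is the combinatorial bookkeeping for each source/target pairing: one must explicitly compute powers of the distinguished cycle-type classes and track when such classes split or remain irreducible upon restriction from $S_n$ to $A_n$ (using the criterion recalled in \S7.2.1), in order to confirm that the source class genuinely lies outside the target's power-closure in every parity configuration.
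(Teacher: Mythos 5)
Your proposal is correct and follows essentially the same route as the paper: Inertia Criterion 2 applied with $E'_2/k(T)$ (and, for $n\geq 8$ even, $E'_3/k(T)$) as source via their $\mathbb{Q}$-rational branch points for part (1), and Inertia Criterion 3 with $E'_3/k(T)$ as source for the $n=6$ case in part (2), together with the same cycle-type/power-closure verifications. The only cosmetic differences are that the paper first packages the intermediate step as a standalone list of missing-class conditions (also used to deduce Theorem 3) and, in part (2), exhibits $[1^2 2^2]$ rather than your equally valid $[1^{3}3^1]$ as the class outside the power-closure of $E'_2$'s invariant.
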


\subsubsection{Proof of corollary \ref{coro An}} As in the case $H=S_n$, the proof has two parts. The first one consists in showing the following general result:

\vspace{2mm}

\noindent
{\it{Let $E'/k(T)$ be a $k$-regular Galois extension of group $A_n$ and inertia canonical invariant $(C_1,\dots,C_r)$. Denote the set of all integers $m$ such that $1 \leq m \leq n$ and $(m,n)=1$ by $I_n$. Then the extension $E'/k(T)$ satisfies the {\rm{(geometric non $A_n$-parametricity)}} condition provided that either one of the following two conditions holds:

\vspace{1mm}

\noindent
{\rm{(1)}} $k$ is hilbertian and one of the following four conditions holds:

\vspace{0.5mm}

{\rm{(a)}} $n$ is odd and $[m^1((n-m)/2)^2]$ is not in $\{C_1,\dots,C_r\}$ for some

odd $m \in I_n$,

\vspace{0.5mm}

{\rm{(b)}} $n$ is odd and $[(m/2)^2 (n-m)^1]$ is not in $\{C_1,\dots,C_r\}$ for some 

even $m \in I_n$,

\vspace{0.5mm}

{\rm{(c)}} $n$ is even and $[(n/2)^2]$ is not in $\{C_1,\dots,C_r\}$,

\vspace{0.5mm}

{\rm{(d)}} $n \geq 8$ is even and neither $[2^1(n-2)^1]$ nor $[1^2((n-2)/2)^2]$ is in

the set $\{C_1,\dots,C_r\}$.

\vspace{1mm}

\noindent
{\rm{(2)}} $k$ is either a number field or a finite extension of a rational function field $\kappa(X)$ with $\kappa$ an arbitrary algebraically closed field of characteristic zero (and $X$ an indeterminate) and either one of the following two conditions holds:

\vspace{0.5mm}

{\rm{(a)}} $n$ is odd and neither $[n^1]_1$ nor $[n^1]_2$ is in $\{C_1,\dots,C_r\}$,

\vspace{0.5mm}

{\rm{(b)}} $n$ is even and neither $[m^1(n-m)^1]_1$ nor $[m^1(n-m)^1]_2$ is in the

set $\{C_1,\dots, C_r\}$ for some $m \in I_n$,

\vspace{0.5mm}

{\rm{(c)}} $n=6$ and neither $[2^14^1]$ nor $[1^22^2]$ is in $\{C_1,\dots,C_r\}$.

\vspace{1mm}

\noindent
In particular, the extension $E'/k(T)$ satisfies the {\rm{(geometric non $A_n$-parametricity)}} condition if $r \leq \varphi(n)/2$ and $k$ is either a number field or a finite extension of a rational function field $\kappa(X)$ with $\kappa$ an arbitrary algebraically closed field of characteristic zero.}}

\vspace{2mm}

The second part of the proof consists in checking that each extension $E'_i/k(T)$ ($i=1,2,3$) satisfies one of the conditions above.

\vspace{2mm}

\noindent
{\it{Part 1.}} The proof is very similar to that in the case $H=G=S_n$. The only differences are that

\noindent
- the extensions of $\S$7.2.1 should be used (instead of those of $\S$7.1.1),

\noindent
- Inertia Criterion 3 should be used if $k$ is assumed to be either a number field or a finite extension of a rational function field $\kappa(X)$ with $\kappa$ an arbitrary algebraically closed field of characteristic zero. 

\noindent
We only indicate in each case which conjugacy class of $A_n$ is not in $\{C_1^a,\dots,C_r^a \, \, / \, \, a \in \mathbb{N}\}$ and which extension $E'_j/k(T)$ from $\S$7.2.1 should be used to conclude. A unified proof is given in \cite[\S3.4.2.2]{Leg13c}.

Assume that $k$ is hilbertian. In case (1)-(a), the conjugacy class $[m^1((n-m)/2)^2]$ is not in $\{C_1^a,\dots,C_r^a \, \, / \, \, a \in \mathbb{N}\}$. Then use the extension $E'_{2}/k(T)$. Replace $[m^1((n-m)/2)^2]$ by $[(m/2)^2 (n-m)^1]$ in case (1)-(b) and by $[(n/2)^2]$ in case (1)-(c). In case (1)-(d), $[1^2((n-2)/2)^2]$ is not in $\{C_1^a,\dots,C_r^a \, \, / \, \, a \in \mathbb{N}\}$. Then use the extension $E'_3/k(T)$.

Now assume that $k$ is either a number field or a finite extension of $\kappa(X)$. In case (2)-(a), either one of the classes $[n^1]_1$ and $[n^1]_2$ is not in $\{C_1^a,\dots,C_r^a \, \, / \, \, a \in \mathbb{N}\}$. Then use the extension $E'_{2}/k(T)$. Replace $[n^1]_1$ and $[n^1]_2$ by $[m^1(n-m)^1]_1$ and $[m^1(n-m)^1]_2$ in case (2)-(b). In case (2)-(c), $[1^22^2]$ is not in $\{C_1^a,\dots,C_r^a \, \, / \, \, a \in \mathbb{N}\}$. Then use $E'_3/k(T)$.

\vspace{2mm}

\noindent
{\it{Part 2}}. Let $i \in \{1,2,3\}$.

\vspace{0.5mm}

\noindent
(a) If $i=1$, condition (1)-(a) holds (with $m=1$). 

\vspace{0.5mm}

\noindent
(b) Assume that $i=2$. If $n\geq 8$ is even (resp. $n=6$), condition (1)-(d) (resp. (2)-(c)) holds. If $n$ is odd and $m \in \{1,n-1\}$ (resp. $m \not \in \{1,n-1\}$), condition (1)-(b) (resp. (1)-(a)) holds. 

\vspace{0.5mm}

\noindent
(c) If $i=3$, condition (1)-(c) holds.

\vspace{2mm}

Proceeding similarly, one may also show that the three $k$-regular Galois extensions $E_1/k(T)$, $E_2/k(T)$ and $E_3/k(T)$ of group $S_n$ from $\S$7.1.1 each satisfies the {\rm{(geometric non $A_n$-parametricity)}} condition if $k$ is hilbertian (for suitable integers $n$). This is done in \cite[\S3.4.3]{Leg13c}.

\subsection{Some other cases $H$ is a non abelian simple group} We now give some examples involving some $k$-regular Galois extensions of $k(T)$ provided by the rigidity method. We use below standard Atlas \cite{Atl} notation for conjugacy classes of finite groups.

\subsubsection{Examples with ${\rm{PSL}}_2(\mathbb{F}_p)$} Let $p \geq 5$ be a prime such that $(\frac{2}{p})=-1$ \hbox{(resp. $(\frac{3}{p})=-1$) and $E_1/k(T)$ (resp. $E_2/k(T)$) a $k$-regular Galois} extension of group ${\rm{PSL}}_2(\mathbb{F}_p)$ and inertia canonical invariant $(2A,pA,pB)$

\vspace{0.4mm}

\noindent
(resp. $(3A,pA,pB)$) \cite[propositions 7.4.3-4 and theorem 8.2.2]{Ser92}.

\vspace{0.4mm}

\begin{corollary} \label{PSL}
Assume that $k$ is hilbertian and $(-1)^{(p-1)/2}p$ is a square in $k$. \hbox{Then the extensions $E_1/k(T)$ (if $(\frac{2}{p})=-1$) and $E_2/k(T)$ (if $(\frac{3}{p})$} \hbox{$=-1$) satisfy the {\rm{(geometric non ${\rm{PSL}}_2(\mathbb{F}_p)$-parametricity)}} condition.}
\end{corollary}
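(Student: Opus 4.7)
The plan is to apply Inertia Criterion 2 in each case, with auxiliary $k$-regular realization of ${\rm{PSL}}_2(\mathbb{F}_p)$ coming from the classical rigid triple $(2A,3A,pA)$ of conjugacy classes. That triple generates ${\rm{PSL}}_2(\mathbb{F}_p)$ for $p\ge 5$ (this is the well-known triangle-group presentation of ${\rm{PSL}}_2(\mathbb{F}_p)$) and is rigid in the sense of the standard rigidity criterion.

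First I would check that this rigid triple yields a $k$-regular realization with three $k$-rational branch points. The classes $2A$ and $3A$ are rational (each is stable under taking powers coprime to its order), whereas for the class $pA$ one has $(pA)^a = pA$ or $(pA)^a = pB$ according to whether $a$ is, or is not, a square modulo $p$. Hence $pA$ becomes rational over the unique quadratic subfield of $\mathbb{Q}(\zeta_p)$, namely $\mathbb{Q}(\sqrt{(-1)^{(p-1)/2}p})$. Under the corollary's hypothesis this subfield is contained in $k$, so the whole triple $(2A,3A,pA)$ is $k$-rational. Combined with its rigidity, this lets one descend the unique associated Galois cover of $\mathbb{P}^1_{\overline{k}}$ to a $k$-regular Galois extension $E_0/k(T)$ of group ${\rm{PSL}}_2(\mathbb{F}_p)$ with inertia canonical invariant $(2A,3A,pA)$ whose three branch points can be prescribed to be any chosen $k$-rational triple.

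I would then apply Inertia Criterion 2 with $H=G={\rm{PSL}}_2(\mathbb{F}_p)$, taking the auxiliary extension of the criterion to be $E_0/k(T)$ and the extension to be shown non-parametric to be, successively, $E_1/k(T)$ and $E_2/k(T)$ of the corollary. For $E_1/k(T)$, whose inertia invariant is $(2A,pA,pB)$, the set of powers of its inertia classes is contained in $\{1,2A,pA,pB\}$ (powers of $2A$ have order $1$ or $2$; powers of $pA$ and $pB$ have order $1$ or $p$), and this set does not contain the order-$3$ class $3A$. The $k$-rational branch point of $E_0/k(T)$ corresponding to $3A$ thus provides condition (IC2-1), while (IC2-2) is immediate from $k$ being hilbertian. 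The argument for $E_2/k(T)$ is symmetric: the powers of its inertia classes lie in $\{1,3A,pA,pB\}$, missing the order-$2$ class $2A$, and the $k$-rational branch point of $E_0/k(T)$ associated with $2A$ yields (IC2-1).

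The expected main obstacle is the descent step producing $E_0/k(T)$, that is, verifying carefully that the $k$-rationality of the triple $(2A,3A,pA)$ granted by the hypothesis on $(-1)^{(p-1)/2}p$, together with rigidity, really suffices to realize ${\rm{PSL}}_2(\mathbb{F}_p)$ as the Galois group of a $k$-regular extension of $k(T)$ with three prescribed $k$-rational branch points; once this auxiliary extension is in hand, the application of Inertia Criterion 2 is entirely routine.
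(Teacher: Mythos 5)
Your proposal is correct and follows essentially the same route as the paper: both use the rigid triple $(2A,3A,pA)$ (rational over $k$ thanks to the hypothesis that $(-1)^{(p-1)/2}p$ is a square in $k$, which is the content of \cite[proposition 7.4.2 and theorem 8.2.1]{Ser92}) to produce the auxiliary $k$-regular realization with three $k$-rational branch points, and then apply Inertia Criterion 2 via the order argument that $3$ (resp.\ $2$) divides none of the ramification indices $2,p,p$ (resp.\ $3,p,p$) of $E_1/k(T)$ (resp.\ $E_2/k(T)$), which is exactly remark \ref{forme 3}.
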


\begin{proof}
Let $E/k(T)$ be a $k$-regular Galois extension of group ${\rm{PSL}}_2(\mathbb{F}_p)$ with three $k$-rational branch points and inertia canonical invariant $(2A,3A,pA)$ \cite[proposition 7.4.2 and theorem 8.2.1]{Ser92}. As $3$ does not divide $2p$ (resp. $2$ does not divide $3p$), condition (IC2-1) of Inertia Criterion 2 applied to $E/k(T)$ and $E_1/k(T)$ (resp. and $E_2/k(T)$) holds (remark \ref{forme 3}). As condition (IC2-2) holds, the conclusion follows.
\end{proof}

\subsubsection{Examples with the Monster group} Let $E_1/k(T)$ be a $k$-regular Galois extension of group the Monster group ${\rm{M}}$ with three $k$-rational branch points and inertia canonical invariant $(2A,3B,29A)$ \cite[proposition 7.4.8 and theorem 8.2.1]{Ser92}. Let $E_2/k(T)$ be a $k$-regular Galois extension of group M with three $k$-rational branch points and corresponding ramification indices 2, 3, 71 \cite{Tho84} (if $-71$ is a square in $k$). Applying twice Inertia Criterion 2 (and remark \ref{forme 3}) to these extensions leads to corollary \ref{Monstre} below:

\begin{corollary} \label{Monstre} 
Assume that $k$ is hilbertian and $-71$ is a square in $k$. Then the two extensions $E_1/k(T)$ and $E_2/k(T)$ each satisfies the {\rm{(geometric non ${\rm{M}}$-parametricity)}} condition. 
\end{corollary}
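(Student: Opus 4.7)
The plan is to deduce both statements from Inertia Criterion 2, by applying it twice with the roles of the two given extensions swapped. Since $k$ is hilbertian by hypothesis, condition (IC2-2) is automatically satisfied in both applications, so the entire task reduces to verifying condition (IC2-1) in each direction. For this, I would use the ramification-index reformulation supplied by remark \ref{forme 3}: if a $k$-rational branch point of the auxiliary extension has ramification index $e$ and $e$ divides none of the ramification indices of the target extension, then the corresponding inertia canonical conjugacy class of $\mathrm{M}$ cannot lie in the set of powers of inertia classes of the target, and (IC2-1) holds.

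For the first application I would take $E_{2}/k(T)$ as the auxiliary extension (playing the role of ``$E_{1}$'' in Inertia Criterion 2) and the given $E_{1}/k(T)$ as the extension to be shown non $\mathrm{M}$-parametric. The auxiliary extension $E_{2}/k(T)$ has a $k$-rational branch point of ramification index $71$, whereas the ramification indices of $E_{1}/k(T)$ at its three branch points are $2$, $3$ and $29$. Since $71$ is coprime to each of $2$, $3$ and $29$, remark \ref{forme 3} yields condition (IC2-1), and Inertia Criterion 2 delivers the (geometric non $\mathrm{M}$-parametricity) of $E_{1}/k(T)$. For the second application I would simply swap the roles: use $E_{1}/k(T)$ as the auxiliary extension and pick its $k$-rational branch point of ramification index $29$. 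Since $29$ does not divide any of the ramification indices $2$, $3$, $71$ of $E_{2}/k(T)$, remark \ref{forme 3} again gives (IC2-1), and Inertia Criterion 2 produces the (geometric non $\mathrm{M}$-parametricity) of $E_{2}/k(T)$.

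The proof is essentially a pair of numerical verifications; the only real content is the mutual coprimality of the large prime ramification orders $29$ and $71$ with each other and with the small orders $2$ and $3$. I anticipate no genuine obstacle, but two points deserve care. First, in each application one must select a $k$-rational branch point, which is why the statement relies on both cited constructions producing three $k$-rational branch points. Second, the hypothesis that $-71$ is a square in $k$ plays no role in the Galois-theoretic argument above; it enters solely through the construction of $E_{2}/k(T)$ as a $k$-regular realization of $\mathrm{M}$ via Thompson's rigidity data, and similarly the existence of $E_{1}/k(T)$ relies on the rigidity of $(2A,3B,29A)$ cited from \cite{Ser92}.
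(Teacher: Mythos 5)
Your proposal is correct and is essentially identical to the paper's own argument: the paper likewise obtains the corollary by applying Inertia Criterion 2 twice with the two extensions exchanging the roles of auxiliary and target, using remark \ref{forme 3} to verify (IC2-1) from the fact that none of the ramification indices $2,3,29$ is a multiple of $71$ and none of $2,3,71$ is a multiple of $29$. Your side remarks on the $k$-rationality of the chosen branch points and on the role of the hypothesis that $-71$ is a square in $k$ are also consistent with the paper's setup.
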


\subsubsection{Examples with $H \not=G$} Let $E/k(T)$ be a $k$-regular Galois extension of group the Baby-Monster B and inertia canonical invariant $(2C,3A,55A)$ \cite[chapter II, proposition 9.6 and chapter I, theorem 4.8]{MM99}.

\begin{corollary} \label{baby}
Assume that $k$ is hilbertian. Then, with ${\rm{Th}}$ the Thompson group, the extension $E/k(T)$ satisfies the {\rm{(geometric non ${\rm{Th}}$-para-metricity)}} condition.
\end{corollary}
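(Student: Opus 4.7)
The plan is to apply Inertia Criterion 2 with the given extension $E/k(T)$ in the role of the ambient $\mathrm{B}$-extension (so $r_2 = 3$, with ramification indices $2$, $3$, $55$) together with some auxiliary $k$-regular Galois extension $E_1/k(T)$ of group $H = \mathrm{Th}$ in the role of the $H$-extension. Condition (IC2-2) is immediate from the hilbertianity of $k$, so the entire content of the proof lies in producing $E_1/k(T)$ and verifying condition (IC2-1).

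First I would invoke the rigidity method (as developed in \cite{MM99}) to fix a rigid rational triple of conjugacy classes of $\mathrm{Th}$ that contains an element of some order $n$ coprime to $\mathrm{lcm}(2,3,55) = 330$. Since $\mathrm{Th}$ admits elements of orders $13$, $19$ and $31$, such a triple is available in the known rigidity tables for sporadic groups. Applied over $\mathbb{Q}$, rigidity produces a $\mathbb{Q}$-regular Galois extension of $\mathbb{Q}(T)$ of group $\mathrm{Th}$ whose three branch points are $\mathbb{Q}$-rational by rationality of the triple combined with the branch cycle lemma, one of them having inertia canonical class of order $n$. Scalar extension to $k$ then provides the desired $E_1/k(T)$ with a $k$-rational branch point $t_{i,1}$ whose ramification index in $E_1 \overline{k}/\overline{k}(T)$ equals $n$.

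Second, condition (IC2-1) follows from remark \ref{forme 3}: since $n$ divides none of $2,3,55$, none of the ramification indices of $E\overline{k}/\overline{k}(T)$ is a multiple of $n$, hence $C_{i,1}^{\mathrm{B}} \notin \{(2C)^a,(3A)^a,(55A)^a : a \in \mathbb{N}\}$. Inertia Criterion 2 then yields the (geometric non $\mathrm{Th}$-parametricity) condition for $E/k(T)$. The one nontrivial ingredient is the existence of the rigid rational triple for $\mathrm{Th}$ containing a class of order coprime to $330$; this is classical and can be read off from the tables of rigid triples for sporadic groups in \cite{MM99}. Everything after that is a direct application of remark \ref{forme 3} and Inertia Criterion 2, exactly in the style of the proofs of corollaries \ref{PSL} and \ref{Monstre}.
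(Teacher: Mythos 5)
Your proposal is correct and follows essentially the same route as the paper: the paper also applies Inertia Criterion 2 together with remark \ref{forme 3} to $E/k(T)$ and an auxiliary $k$-regular realization of ${\rm{Th}}$ coming from rigidity, namely the one with three $k$-rational branch points and inertia canonical invariant $(2A,3A,19A)$ from \cite[chapter II, proposition 9.5 and chapter I, theorem 4.8]{MM99}. The only difference is that the paper pins down this specific triple (with the class $19A$ playing the role of your class of order coprime to $330$), whereas you leave the choice implicit; the existence you invoke is exactly what that reference supplies.
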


\begin{proof}
It suffices to apply Inertia Criterion 2 (and remark \ref{forme 3}) to the extensions $E'/k(T)$ and $E/k(T)$, where $E'/k(T)$ denotes any $k$-regular Galois extension of group ${\rm{Th}}$ with three $k$-rational branch points and inertia canonical invariant $(2A,3A,19A)$ \cite[chapter II, proposition 9.5 and chapter I, theorem 4.8]{MM99}.
\end{proof}

Any finite group $H$ is a subgroup of $G=S_n$ provided that $n \geq |H|$. This allows us to give some examples of non $H$-parametric extensions of group $S_n$ for large enough integers $n$. For instance, the extension $E_{1}/k(T)$ from $\S$7.1.1(a) satisfies the following:

\begin{corollary} \label{coro J2}
Let $n$ be a positive integer $\geq 604800$. Assume that either one of the following two conditions holds:

\vspace{0.5mm}

\noindent
{\rm{(1)}} $7 {\not \vert} n$ and $k$ is hilbertian,

\vspace{0.5mm}

\noindent
{\rm{(2)}} $5 {\not \vert} n$ and $k$ is either a number field or a finite extension of a rational function field $\kappa(X)$ with $\kappa$ an arbitrary algebraically closed field of characteristic zero (and $X$ an indeterminate).

\vspace{0.5mm}

\noindent
Then, with ${\rm{J}}_2$ the Hall-Janko group, the extension $E_{1}/k(T)$ satisfies the {\rm{(geometric non ${\rm{J_2}}$-parametricity)}} condition.
\end{corollary}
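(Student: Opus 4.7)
The strategy is to apply one of the Inertia Criteria to the pair $(F/k(T),\,E_{1}/k(T))$, with $F/k(T)$ a $k$-regular Galois extension of $k(T)$ of group $J_{2}$ whose inertia canonical invariant contains a class of elements of order $7$ (in case (1), located at a $k$-rational branch point) or of order $5$ (in case (2)). Such extensions exist via the rigidity method applied to $J_{2}$ (as in \cite{MM99}) in the number field case, and via the Riemann existence theorem over $\kappa(X)$ in the remaining subcase of case (2); the $k$-rational branch point required in case (1) is guaranteed by the use of a rational rigid triple of $J_2$ containing a class of order $7$.

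Once $F/k(T)$ is fixed, the verification of the relevant Inertia Criterion reduces to a short combinatorial check in $S_{n}$. Embed $J_{2}\hookrightarrow S_{|J_{2}|}\hookrightarrow S_{n}$ via the left regular representation of $J_{2}$ padded with $n-|J_{2}|$ fixed points. Under this embedding, any element of $J_{2}$ of order $d$ maps to a permutation of cycle type $1^{n-|J_{2}|}\,d^{|J_{2}|/d}$. On the other hand, by \S7.1.1(a) the inertia canonical invariant of $E_{1}/k(T)$ is $([n^{1}],[1^{n-2}2^{1}],\dots,[1^{n-2}2^{1}])$, so the powers of its inertia classes in $S_{n}$ are exactly the identity, the transpositions, and the powers of an $n$-cycle; the latter have cycle type $(n/\gcd(n,a))^{\gcd(n,a)}$, with all cycles of a common length $n/\gcd(n,a)$ dividing $n$.

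The crux is then the observation that, for $d\in\{5,7\}$ with $d\nmid n$ (i.e.\ $d=7$ in case (1) or $d=5$ in case (2)), the cycle type $1^{n-|J_{2}|}\,d^{|J_{2}|/d}$ is none of the above: it is neither the identity nor a transposition since $d\ge 5$, and it is not a power of an $n$-cycle, since no such power can contain a cycle of length $d$ when $d\nmid n$ (cf.\ remark \ref{forme 3}). Hence condition (IC2-1) in case (1), respectively (IC3-1) in case (2), is verified at the index $i$ corresponding to the selected branch point of $F/k(T)$; combined with (IC2-2) or (IC3-2), which are precisely the hypotheses on $k$ in the two cases, the corresponding Inertia Criterion yields the (geometric non $J_{2}$-parametricity) of $E_{1}/k(T)$. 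The main potential obstacle is the first step, namely producing $F/k(T)$ with prescribed inertia at a $k$-rational branch point in case (1), but this is handled exactly as for the sporadic groups M and B in corollaries \ref{Monstre} and \ref{baby}.
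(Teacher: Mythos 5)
Your proposal is correct and follows essentially the same route as the paper: the paper applies Inertia Criterion 2 (case (1)) or Inertia Criterion 3 (case (2)), together with remark \ref{forme 3}, to the pair consisting of a $k$-regular realization of ${\rm{J}}_2$ with inertia canonical invariant $(5A,5B,7A)$ (from Serre's book, with the $7A$ branch point $k$-rational) and the Morse-polynomial extension $E_1/k(T)$. Your explicit cycle-type computation in $S_n$ is just an unpacking of remark \ref{forme 3} (an element of order $d\in\{5,7\}$ cannot lie in a power of $[n^1]$ or $[1^{n-2}2^1]$ when $d\nmid n$), so the two arguments coincide.
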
 

\begin{proof}
It suffices to apply Inertia Criterion 2 if condition (1) holds or Inertia Criterion 3 if condition (2) holds (and remark \ref{forme 3} in both situations) to the extensions $E/k(T)$ and $E_{1}/k(T)$, where $E/k(T)$ denotes any $k$-regular Galois extension of group ${\rm{J}}_2$, inertia canonical invariant $(5A,5B,7A)$ and such that the branch point corresponding to $7A$ is $k$-rational \cite[proposition 7.4.7 and theorem 8.2.2]{Ser92}.
\end{proof}

\subsection{The case $H$ is a $p$-group}
Let $G$ be a finite group, $p$ a prime divisor of $|G|$ and $E/k(T)$ a $k$-regular Galois extension of group $G$.

\begin{corollary} \label{cyclic} 
Assume that the following two conditions hold:

\vspace{0.5mm}

\noindent
{\rm{(1)}} $p$ divides none of the ramification indices of the branch points,

\vspace{0.75mm}

\noindent
{\rm{(2)}} $k$ is a number field or a finite extension of a rational function field $\kappa(X)$ with $\kappa$ an arbitrary algebraically closed field of characteristic zero.

\vspace{1mm}

\noindent
Then the extension $E/k(T)$ satisfies the {\rm{(geometric non $H$-parametrici-ty)}} condition for any $p$-subgroup $H \subset G$ which occurs as the Galois group of a $k$-regular Galois extension of $k(T)$. Furthermore condition (2) can be removed in the case $p=2$ and $H=\mathbb{Z}/2\mathbb{Z}$.
\end{corollary}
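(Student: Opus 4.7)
My plan is to deduce the main assertion from Inertia Criterion 3, and the final clause ($p=2$, $H=\mathbb{Z}/2\mathbb{Z}$) from Theorem \ref{methode} together with Addendum 4.2(2) applied to a tailored extension. Fix a $k$-regular Galois extension $E_1/k(T)$ of group $H$ (available by assumption; one may assume $H$ non-trivial) with inertia canonical invariant $(C_{1,1},\dots,C_{r_1,1})$, and write $(C_{1,2},\dots,C_{r_2,2})$ for that of $E/k(T)$.

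The central input is an order-parity argument. Since $H$ is a non-trivial $p$-group, the ramification index of $E_1/k(T)$ at each branch point is a positive power of $p$, so for every $i$ each element of $C_{i,1}$—and hence of $C_{i,1}^G$—has order divisible by $p$. Condition (1), on the other hand, forces every ramification index of $E/k(T)$ to be coprime to $p$, so every element of each $C_{j,2}$ has order coprime to $p$; since the order of a power $g^a$ divides the order of $g$, the same holds for every $C_{j,2}^a$. Consequently $C_{i,1}^G \notin \{C_{1,2}^a,\dots,C_{r_2,2}^a \, / \, a \in \mathbb{N}\}$ for every $i$, which is condition (IC3-1). As condition (IC3-2) is supplied by assumption (2), Inertia Criterion 3 applied to $E_1/k(T)$ and $E/k(T)$ yields the (geometric non $H$-parametricity) condition.

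For the refined clause, I would instead apply Theorem \ref{methode} and Addendum 4.2(2) directly over any finite extension $k'/k$. Let $A'$ be the integral closure of $A$ in $k'$ (still a Dedekind domain of characteristic zero with infinitely many primes), and take $E_1 k'/k'(T) = k'(\sqrt{T})/k'(T)$: it is a $k'$-regular $\mathbb{Z}/2\mathbb{Z}$-extension with two $k'$-rational branch points $0$ and $\infty$. Letting $i$ correspond to $0$, one has $m_{i,1}(T) = T$, and the choice $t_0 = 0$ shows every prime of $A'$ is a prime divisor of $m_{i,1}(T)\cdot m_{i,1}^*(T)$, which supplies part (a) of the Inertia Hypothesis; the order-parity argument supplies part (b), since the inertia at $0$ has order $2 = p$. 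Finally, the unique non-trivial conjugacy class of $\mathbb{Z}/2\mathbb{Z}$ automatically forms a $g$-complete set, so condition (2) of Addendum 4.2 holds with $I = \{i\}$. The combination produces infinitely many Galois extensions of $k'$ of group $\mathbb{Z}/2\mathbb{Z}$ that are not specializations of $Ek'/k'(T)$.

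The point requiring care is choosing the right criterion for each setting: the general case relies on Inertia Criterion 3 (and hence on the base-field hypothesis (2)), whereas removing (2) in the refined case forces a return to Theorem \ref{methode} itself, where the infinitely-many-prime-divisors input of the Inertia Hypothesis has to be supplied by hand via the concrete choice $k'(\sqrt{T})/k'(T)$ and its $k'$-rational branch point at $0$.
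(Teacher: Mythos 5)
Your proof is correct and follows essentially the same route as the paper: the main assertion is Inertia Criterion 3 applied to $E_H/k(T)$ and $E/k(T)$, with your order-divisibility argument being exactly the content of remark \ref{forme 3} (ramification indices of a non-trivial $p$-group extension are positive powers of $p$, those of $E/k(T)$ are prime to $p$). For the final clause the paper simply invokes Inertia Criterion 1 with $E_H=k(\sqrt{T})$ (whose hypotheses (IC1-1)--(IC1-3) you verify in substance), whereas you unwind that criterion back to theorem \ref{methode} and addendum 4.2(2) with $I$ the index of the branch point $0$; this is the same mechanism, just re-derived rather than cited.
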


\begin{remark}
Assume that $k$ is a number field. Under condition (1), one has the following two conclusions.

\vspace{1mm}

\noindent
(a) {\it{The extension $E/k(T)$ satisfies the {\rm{(geometric non $\mathbb{Z}/p\mathbb{Z}$-parametri-city)}} condition}}.

\vspace{1mm}

\noindent
(b) {\it{There is a finite extension $k'/k$ such that $Ek'/k'(T)$ satisfies the {\rm{(geometric non $H$-parametricity)}} condition for any $p$-subgroup $H \subset G$.}}

\vspace{1mm}

\noindent
In the case $k$ is a finite extension of a rational function field $\kappa(X)$ with $\kappa$ an arbitrary algebraically closed field of characteristic zero, then, under condition (1), statement (b) holds with $k'=k$.
\end{remark}

Corollary \ref{cyclic} may be applied to various $k$-regular Galois extensions of $k(T)$. For example, consider those of group the Conway group ${\rm{{Co}}}_1$ and inertia canonical invariant $(3A, 5C, 13A)$ \cite[chapter II, proposition 9.3 and chapter I, theorem 4.8]{MM99}. The set of prime divisors of $|{\rm{{Co}}}_1|$ is $\{2,3,5,7,11,13,23\}$ and condition (1) holds for any prime $p$ in $\{2,7,11,23\}$. Moreover many $k$-regular Galois extensions of $k(T)$ recalled in this paper also satisfy condition (1) (for suitable primes $p$).

\begin{proof}
Given a $p$-subgroup $H \subset G$ as in corollary \ref{cyclic}, the conclusion follows from Inertia Criterion 3 (and remark \ref{forme 3}) applied to any $k$-regular Galois extension $E_H/k(T)$ of group $H$ and $E/k(T)$. 

In the special case $p=2$ and $H=\mathbb{Z}/2\mathbb{Z}$, take $E_H=k(\sqrt{T})$ and use Inertia Criterion 1 (instead of Inertia Criterion 3) to conclude.
\end{proof}

\bibliography{Biblio2}
\bibliographystyle{alpha}

\end{document}